\newcommand{\CC}{{\mathbb C}}
\newcommand{\RR}{{\mathbb R}}
\newcommand{\HH}{{\mathcal H}}
\newcommand{\cN}{{\mathcal N}}
\renewcommand{\Re}{\mathop{\rm Re}\nolimits}
\renewcommand{\Im}{\mathop{\rm Im}\nolimits}
\newcommand{\nn}{\nonumber}
\newcommand{\dd}{\tau}
\theoremstyle{plain}
\newtheorem{thm}{Theorem}
\newtheorem{prop}{Proposition}[section]
\newtheorem{lem}[prop]{Lemma}
\theoremstyle{definition}
\newtheorem{rem}{Remark}[section]
\numberwithin{equation}{section}
\def\squarebox#1{\hbox to #1{\hfill\vbox to #1{\vfill}}}
\newcommand{\la}{\langle}
\newcommand{\ra}{\rangle}
\newcommand{\p}{\partial}
\title
[NLS / GP with double well potentials]
{Long time dynamics near the symmetry breaking bifurcation for Nonlinear Schr\"odinger/Gross-Pitaevskii Equations}
\author[J.L. Marzuola]{Jeremy L. Marzuola}
\author[M.I. Weinstein]{Michael I. Weinstein}
\address{Department  of Applied Physics and Applied Mathematics, Columbia University \\
200 S. W. Mudd, 500 W. 120th St., New York City, NY 10027, USA}
\email{jm3058@columbia.edu, miw2103@columbia.edu}
\begin{document}    

\begin{abstract} We consider a class  nonlinear Schr\"odinger / Gross-Pitaevskii equations (NLS/GP) with a focusing (attractive) nonlinear potential and symmetric double well linear potential. NLS/GP  plays a central role in the modeling of nonlinear optical and mean-field quantum many-body phenomena. It is known that there is a  critical $L^2$ norm (optical power / particle number) at which there is a symmetry breaking bifurcation of the ground state. 
We study the rich dynamical behavior near the symmetry breaking point.
The source of this behavior in the full Hamiltonian PDE is related  to the dynamics of a finite-dimensional Hamiltonian reduction.
We derive this reduction, analyze a part of its phase space and  prove a {\it shadowing theorem} on the persistence of solutions, with oscillating mass-transport between wells,  on very long, but finite, time scales within the full NLS/GP. The infinite time dynamics for NLS/GP are expected to depart, from the finite dimensional reduction, due to resonant coupling of discrete and  continuum / radiation modes.
 \end{abstract}

\maketitle

\section{Introduction and Outline}
\label{sec:Intro-and-Outline}

The  cubic nonlinear Schr\"odinger / Gross-Pitaevskii  (NLS / GP) equation
\begin{eqnarray}
\label{eqn:nlsdwp}
i \partial_t u =  (- \Delta + V(x)) u\ - \left| u\right|^2 u
\end{eqnarray}
plays a central role in the mathematical description of nonlinear optical and quantum many-body phenomena. 
  In the context of nonlinear optics, $u$ denotes the slowly varying envelope of a nearly monochromatic  electromagnetic field propagating in a wave-guide, $t$ the distance along the wave-guide and $x\in \RR^2$ the dimensions transverse to the wave guide \cite{Moloney-Newell,Boyd}.  At low intensity, light is guided to a higher refractive index region, corresponding to a potential well $V(x)$. The {\it Kerr} nonlinear effect gives rise to an increase in the refractive index in regions of higher intensity, $|u|^2$, and therefore a ``deeper'' effective potential well $V(x)-|u|^2$. In  the context of quantum many-body physics, NLS/GP emerges 
in the mean-field limit of many weakly interacting identical quantum particles obeying Bose statistics, as the number of particles tends to infinity 
\cite{PS:03,ESY,RS} and other recent works. The potential $V(x)$ governs the confining trap for the quantum particles.\footnote{NLS / GP falls into a larger class of models:
\begin{eqnarray}
\label{eqn:nlsdwp-gK}
i \partial_t u =  (- \Delta + V(x)) u\ +\ gK\left[\left|u\right|^2\right] u,
\end{eqnarray}
allowing for more general nonlinear terms, {\it e.g.} more general
functions of $|u|^2$ or $K$ nonlocal; see, for example,
\cite{KKSW}. For $g < 0$ (focusing case)  our analysis yields very
similar results to those obtained above. For $g>0$ (defocusing) our
methods  can be adapted to prove a shadowing result for trajectories
of the finite dimensional reduction which arises.}

In this paper we focus on a class of symmetric 
{\it double-well} potentials.  A model to keep in mind is the two parameter family of symmetric double well potentials:
\begin{eqnarray}
\label{dw-gauss}
V(x) = V(x;\sigma,L)= - \left[ \frac{1}{\sqrt{4 \pi \sigma^2}} e^{-\frac{(x-L)^2}{4 \sigma^2}} + \frac{1}{\sqrt{4 \pi \sigma^2}} e^{-\frac{(x+L)^2}{4 \sigma^2}} \right], 
\end{eqnarray}
which converges as $\sigma\to0^+$ to a double-delta  well at $\pm L$.  In figure  \ref{fig:pot} the case $L=3,\ \sigma =1$ is shown.

\begin{figure}
\includegraphics[scale=0.4]{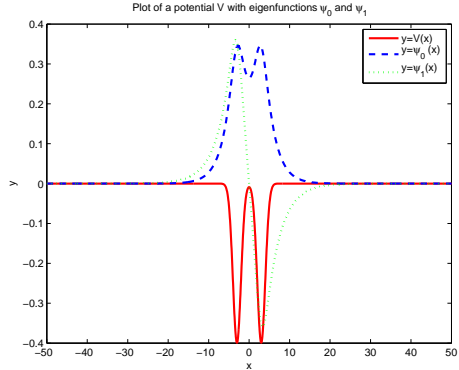} 
\caption{
Double-well potential, $V(x;3,1)$.  Superposed are the  two eigenstates, even ground ($\psi_0$) and odd excited  ($\psi_1$) states, of the Schr\"odinger operator, $-\partial_x^2+V(x;3,1)$}
\label{fig:pot}
\end{figure}

Double-well potentials are of particular interest in optics as models of coupled parallel wave guides channeling light, which interacts through through evanescent tails. In the quantum context, these are natural and simple systems in which to study quantum tunneling. As discussed below, the combined effects of a confining double-well potential
 with focusing cubic nonlinearity lead to the phenomenon of spontaneous {\it symmetry breaking of the ground state} at sufficiently high optical power 
  or particle number. See, for example,  \cite{YL:99,Oberthaler-etal:05,KCMFW} for experimental studies of symmetry breaking.  {\it   Our goal in this
    paper is to investigate the phase space dynamics  of NLS/GP   near
    the symmetry breaking point.}    

The equation NLS/GP, \eqref{eqn:nlsdwp-gK}, is a Hamiltonian system  and expressible in the form: 
\begin{equation}
i\partial_t u\ =\ \frac{ \delta{\HH}  }{\delta u^*}\ .
\label{Ham-u}
\end{equation}
 $\HH$ denotes the conserved Hamiltonian energy functional:
\begin{equation}
\HH[u]\ =\ \int \left( | \nabla u |^2 + V |u|^2 - \frac14 |u|^4 \right) dx.
\nonumber\end{equation}
The conserved squared $L^2$ norm (particle number / optical power)
 is denoted:
\begin{equation}
\cN[u]\ =\ \int |u|^2 dx.
\label{NNdef}
\end{equation}

We  are interested in the dynamics near special classes of {\it nonlinear bound states} of NLS/GP. Nonlinear bound states are solutions of the form
\begin{equation}
u(x,t)\ =\ e^{-i\Omega t}\ \Psi_\Omega(x) ,
\nn\end{equation}
where $\Psi_\Omega$ is spatially localized: 
\begin{equation}
 (- \Delta + V(x)) \Psi_\Omega\ -\left| \Psi_\Omega\right|^2 \Psi_\Omega\ =\ \Omega\ \Psi_\Omega,\ \ \ \Psi_\Omega\in H^1(\RR).\label{nl-elliptic} 
\end{equation}

Consider first the linear case of the linear eigenvalue problem:
\begin{equation}
(- \Delta + V(x)) \Psi_\Omega\ =\ \Omega\ \Psi_\Omega,\ \ \ \Psi_\Omega\in H^1(\RR).\label{evp} 
\end{equation}
 
In this case, there is a least energy {\it ground state}, $\psi_0$ with corresponding simple eigenvalue $\Omega_0$ \cite{RSv4}. If the separation between wells is sufficiently large, then the ground state eigenfunction is a {\it bimodal} positive symmetric state, which reflects the discrete symmetry of the potential \cite{EH,Simon,JW}; see figure \ref{fig:pot}.  In addition, there is an anti-symmetric (odd) state, $\psi_1$ with energy $\Omega_1$, such that $\Omega_0<\Omega_1<0$. 
 
In the attractive / focusing  {\it nonlinear} case, \eqref{eqn:nlsdwp},
the character of solutions, and the solution set varies with the solution norm. Indeed, if we  consider the set of solutions of \eqref{nl-elliptic}  on the level set 
\begin{equation}
\int |\Psi_\Omega|^2 = \cN,
\label{constraintN}
\end{equation}
we find that  for large enough well-separation, there is a {\it symmetry breaking threshold} $\cN_{cr}$ \cite{KKSW}; see figure\ref{fig:sols}:
 
\begin{enumerate}
\item If $\cN<\cN_{cr}$ there is a unique positive, symmetric  and bimodal state. 
\item For $\cN>\cN_{cr}$, (modulo phase) there are three positive localized states: a symmetric state (which exists for all $\cN>0$) and  two are {\it asymmetric states}, biased respectively to the right and left wells.   
\item As $\cN$ increases beyond $\cN_{cr}$ this symmetry broken state becomes increasingly concentrated in one of the wells \cite{AFGST,KKSW}. 
\item The  symmetric (bimodal) state is dynamically stable for $\cN< \cN_{cr}$ and unstable for  $\cN>\cN_{cr}$. For $\cN>\cN_{cr}$ the asymmetric states are stable.
\end{enumerate}
 That symmetry breaking occurs at {\it sufficiently large} values of $\cN$ was studied variationally in \cite{AFGST} for the nonlinear Hartree equation. Their  method can be adapted to a large class of equations, for which a ground state can be realized as a minimizer of a Hamiltonian, ${\mathcal H}$, subject to fixed ${\mathcal N}$.  The above detailed portrait of the symmetry breaking transition and the exchange of stability among branches was studied in detail in \cite{KKSW}. In particular, for large well-separation, $L$,  
 \begin{equation}
 \cN_{cr}(L)={\mathcal O}\left(\Omega_1(L)-\Omega_0(L)\right)={\mathcal O}(e^{-\kappa L}),\ \ \kappa>0.
 \label{Ncr-est}
 \end{equation}
\begin{figure}
\includegraphics[scale=0.4]{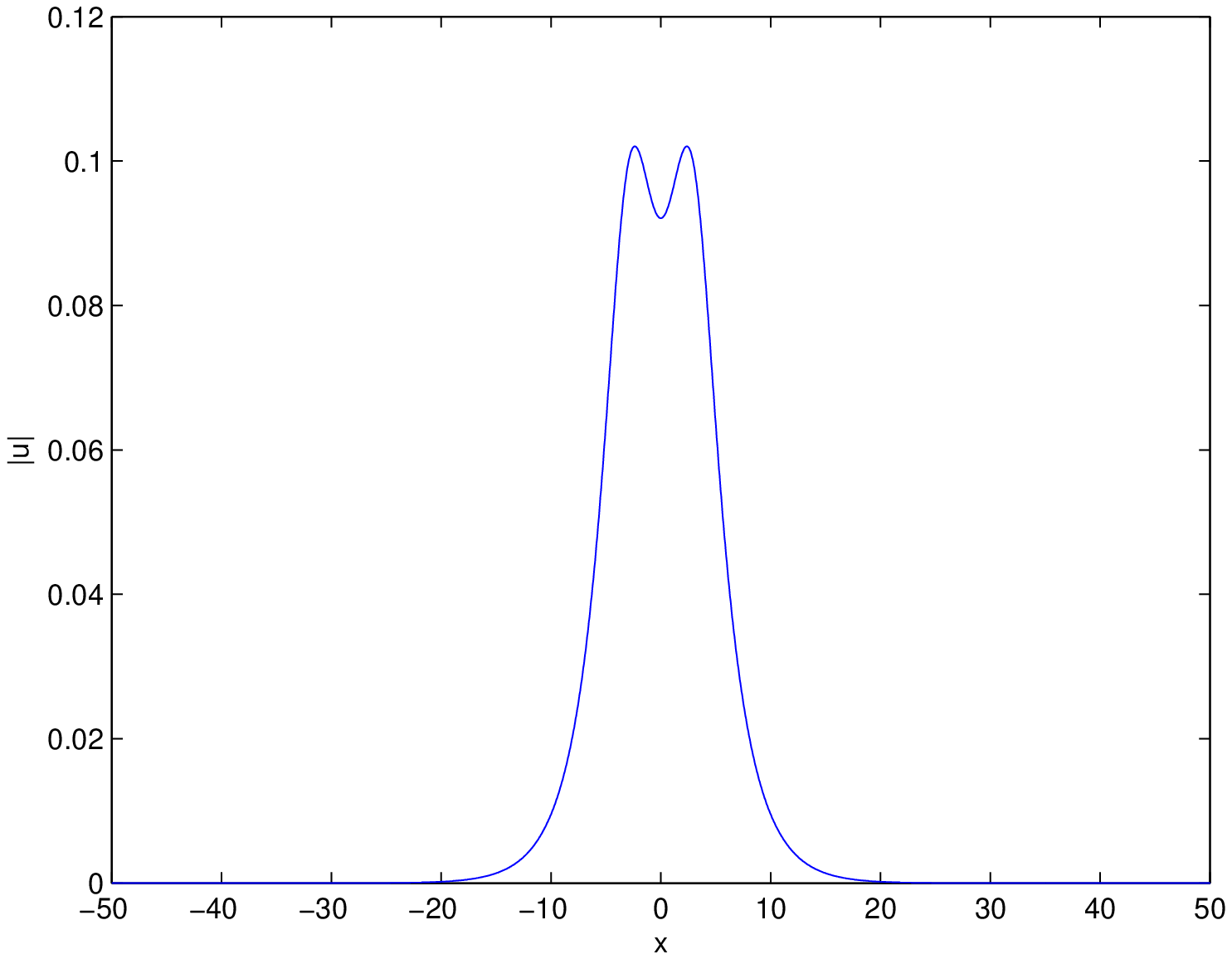} 
\includegraphics[scale=0.4]{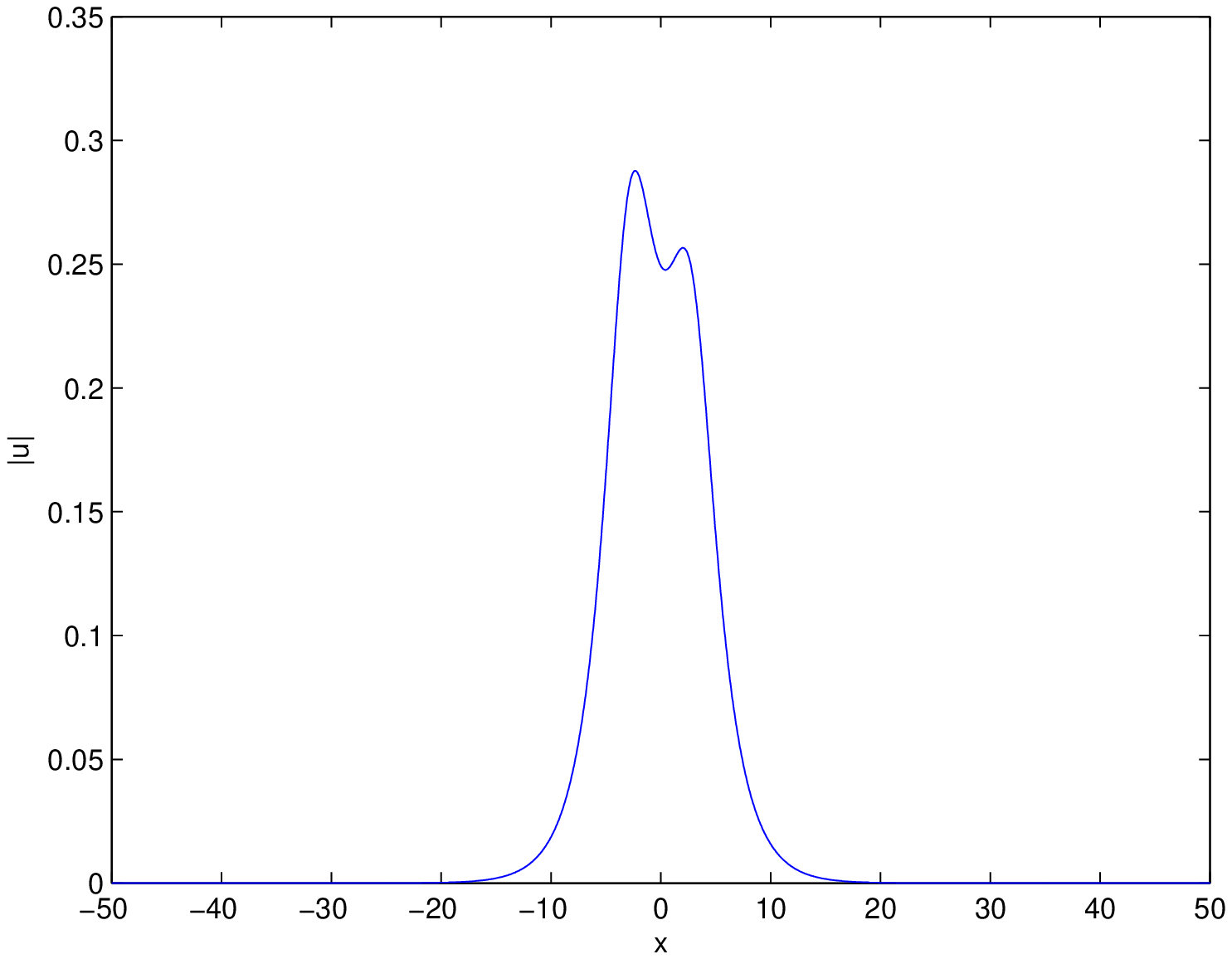} 
\caption{A symmetric (bimodal) soliton and an asymmetric soliton, which exists only for $\cN>\cN_{cr}$.}
\label{fig:sols}
\end{figure}
Figure \ref{fig8} displays a numerical computation of the symmetry-broken state, occurring for $\cN>\cN_{cr}$, where $V(x)$ is of the type displayed  (\ref{dw-gauss}).   
\begin{figure}
\begin{center}
\includegraphics[scale=0.55]{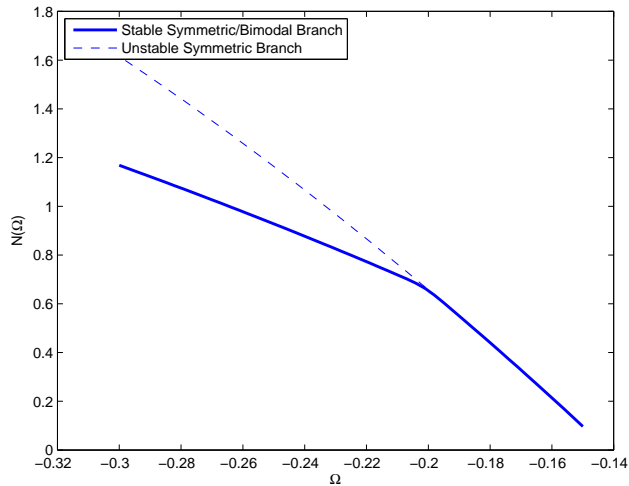} 
\end{center}
\caption{A numerical plot of the symmetry breaking in the soliton curve.  Specifically, we plot here $N$ versus $\Omega$ for a double- (Gaussian) well potential with separation parameter $L = 3$ and $\sigma = 1$.  Note the stable curve is invariant under reflection about the $y$ axis, hence the slightly thicker curve represents the resulting degeneracy for the asymmetric states.}
\label{fig8}
\end{figure}

Our goal is to explore the {\it detailed general dynamics} near the symmetry breaking transition. Toward a formulation of precise results, we first introduce a class of double-well potentials in  dimension one \footnote{The results of this paper can be extended to higher dimensions.}. Following \cite{EH}, start with a single rapidly decaying potential well centered at $0$, $V_0(x)$, for which the Schr\"odiner operator $H_0 = -\partial_x^2 + V_0$ has exactly one (simple) eigenvalue $\omega$.  Then, construct a double well potential
\begin{eqnarray}
V_L(x) = V_0 (x-L) + V_0 (x+L),\ \ L>0
\end{eqnarray}
and define the Schr\"odinger operator
\begin{equation}
H_L = -\partial_x^2 + V_L.
\label{HLdef}
\end{equation}
  There exists $L_0>0$ such that for $L>L_0$, $H_L$ has a pair of simple eigenvalues, $\Omega_0 = \Omega_0 (L)$ and $\Omega_1 = \Omega_1 (L)$ and corresponding eigenfunctions $\psi_0$ (even) and $\psi_1$ (odd):
  \begin{align}
&H \psi_j = \Omega_j \psi_j, \ j = 0,1;\ \ \ \psi_j\in L^2\nn\\
&\Omega_0 < \Omega_1 < 0. \nn
\end{align}
As noted above, the symmetry breaking threshold, $\cN_{cr}(L)$ 
 (equation \eqref{Ncr-est})  is exponentially small for large well-separation.
Therefore, to study the dynamics in a neighborhood of the symmetry
breaking point,  it is natural use coordinates associated with the
{\it linear} operator $H_0=-\partial_x^2+V$.  Throughout this result,
we will assume that $V$ is such that $\psi_0$
and $\psi_1$ are the only discrete eigenfunctions of $H_0$ and in
addition that $V$ is sufficiently smooth and decaying as defined in \cite{W}
and discussed in Appendix \ref{sec:est} as to guarantee
dispersive estimates to do perturbation theory.  We note that these
assumptions will be satisfied by double delta wells, for which we
have done our numerical computations.  These ideas will be explored further in a forthcoming note \cite{DMW}.

Thus, we expand the solution as follows:
\begin{align}
&u(x,t)\  =\ c_0(t) \psi_0(x) + c_1(t) \psi_1(x) + R (t,x) ,\nn\\
&\la \psi_j, R(\cdot,t)\ \ra\ =\ 0,\ \ \ j=0,1.\label{decomp}
\end{align}
By orthogonality, we have
\begin{equation}
\cN[u(\cdot,t)]\ =\ |c_0(t)|^2\ +\ |c_1(t)|^2\ +\ \int\ |R(x,t)|^2\
dx\ =\ \cN[u(\cdot,0)].
\label{totalenergy}
\end{equation}
  
Referring to this decomposition, we now give an overview of the paper:
 \begin{enumerate}
 \item In section \ref{sec:oscillator-field} we express the NLS / GP as an equivalent dynamical system governing $c_0(t)$, $c_1(t)$ and $R(x,t)$. This Hamiltonian system, equivalent to NLS / GP, has the form of two equations governing discrete nonlinear ``oscillators'', coupled to an equation for a wave field, $R(x,t)$.
 \item In section \ref{sec:finite-dim} we study the finite dimensional reduction governing $c_0$ and $c_1$, obtained by dropping all terms which couple the oscillator and field variables.   This reduction is a finite dimensional Hamiltonian system with conserved Hamiltonian:
\begin{equation}
H(c_0,\bar{c_0},c_1,\bar{c_1}),
\nn\end{equation}
given in \eqref{Hdef}, and $l^2$ invariant 
\begin{eqnarray*}
N[c_0,c_1]=|c_0|^2+|c_1|^2.  
\end{eqnarray*}

\begin{rem}  
Note also that the reduction above is what we get if we make the
Ansatz \eqref{decomp} into the Lagrangian of NLS/GP, that is restrict
${\mathcal L}_{NLS-GP}$ to the bound state manifold, and obtain the
equations of motion as in \cite{HZ}.
\end{rem}
 
Use of these two invariants facilitates an analysis of the finite dimensional phase space; trajectories with prescribed values of  $H$ and $N$ lie on a two-dimensional surface.  We then discuss some of the rich dynamics of this finite dimensional reduction and our goal is to  prove their persistence, for non-trivial time scales , for with the full PDE, NLS/GP. \bigskip
 
For the dynamics on level set $N\sim N_{cr}^{FD} \approx N_{cr}$, we establish the following behavior:\\ 
  \begin{enumerate}
 \item $N<N^{FD}_{cr}:$\ There is an elliptic fixed point, corresponding to the stable symmetric state of NLS/GP for $N<N_{cr}$.
 A neighorhood of this fixed point is foliated by stable time-periodic solutions. 
   \item  $N>N^{FD}_{cr}:$\ The fixed point for $N<N^{FD}_{cr}$ persists, but transitions from being a stable  elliptic point to an unstable saddle. This corresponds to the unstable symmetric-bimodal state for $\cN>\cN_{cr}$. At $N=N^{FD}_{cr}$, two new elliptic equilibria bifurcate and, a neighborhood of each is  foliated by stable time-periodic solutions. These stable time-periodic oscillations correspond to stable oscillations around the stable asymmetric standing waves for 
 $\cN>\cN_{cr}$.
 \item  $N>N^{FD}_{cr}:$\ There are periodic solutions, outside a separatrix, which encircle both new equilibria. In the physical configuration space, these correspond to soliton transport from one well to the other and back, continuing periodically. Furthermore, one can quantify the ``energy barrier'' that must be exceeded to dislodge a ``soliton'' from localization about one of the wells.  Energy thresholds, such as that described above play an important role in transport of energy in inhomogeneous and discrete systems. 
Natural directions to pursue beyond this work are transport in systems  with many wells and  the {\it Peireles-Nabbaro barrier} for motion of localized coherent structures discrete lattice systems.
 \end{enumerate}
\bigskip 

 \item In section \ref{sec:persist} we prove that {\it certain} phase space structures for the finite dimensional dynamical systems, persist in character for the full NLS/GP system, {\it on long, but finite time scales}. Stated nontechnically,\\ \\
 
 \noindent{\bf Theorem:}\ For any sufficiently small amplitude periodic solution about equilibrium an equilibrium state of the finite dimensional reduction ($N$ above or below the bifurcation threshold), there is a solution of the PDE (NLS/GP), whose projection into the finite dimensional phase space, shadows this finite dimension orbit on very long time scales.\\ 
 
 A precise statement is given in  Theorem \ref{thm:main-eq}; see also Figure \ref{fig:schem}. 
 The time scales on which these results hold enable us to see nearly periodic  oscillations for the PDE on long time scales, i.e. through many, many oscillations, but not on an infinite time scale. Indeed, we do not expect the persistence of such oscillations on infinite time scales due to  nonlinear coupling of bound to to radiation modes for the full system,  for example, \cite{SofWe:99,GW}.  We conjecture that in the infinite time limit, the soliton executes a (radiation) damped oscillation to some stable nonlinear bound state, corresponding to the damped oscillatory decay to a stable equilibrium of the finite dimensional reduction. Evidence is presented in section \ref{sec:num}, where
 numerical simulations are discussed.

 We remark that  our current theorem does not apply to perturbations of general ``large'' periodic orbits of the finite dimensional reduction.  More detailed information on the Floquet theory of the  linearized equations about general periodic orbits  is still needed. This is currently being investigated.\\
 
  \item In section \ref{sec:conclusion} we provide a summary and discussion of open problems.
 \end{enumerate}
\begin{figure}
\includegraphics[scale=0.25]{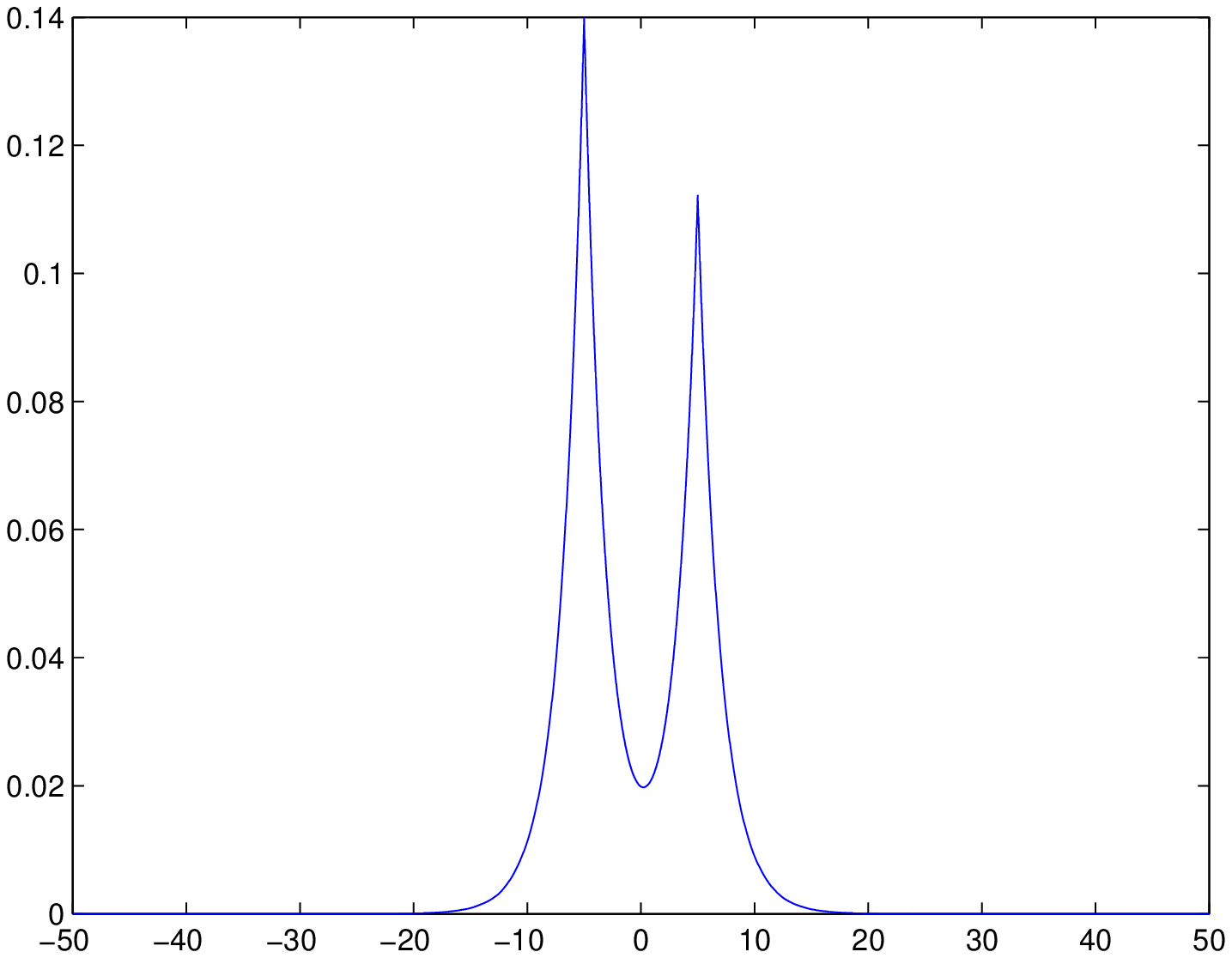} 
\includegraphics[scale=0.25]{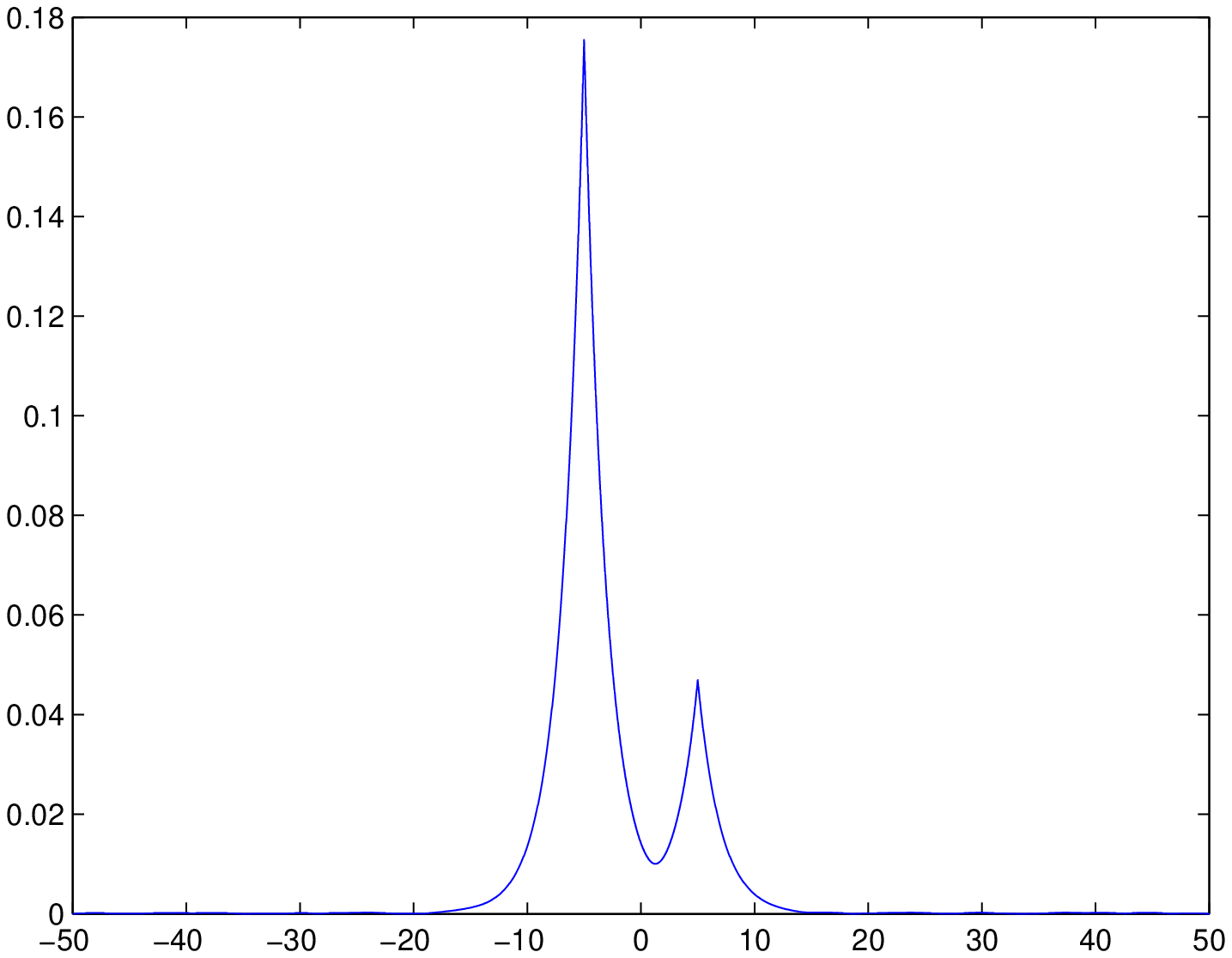}  \\
\includegraphics[scale=0.25]{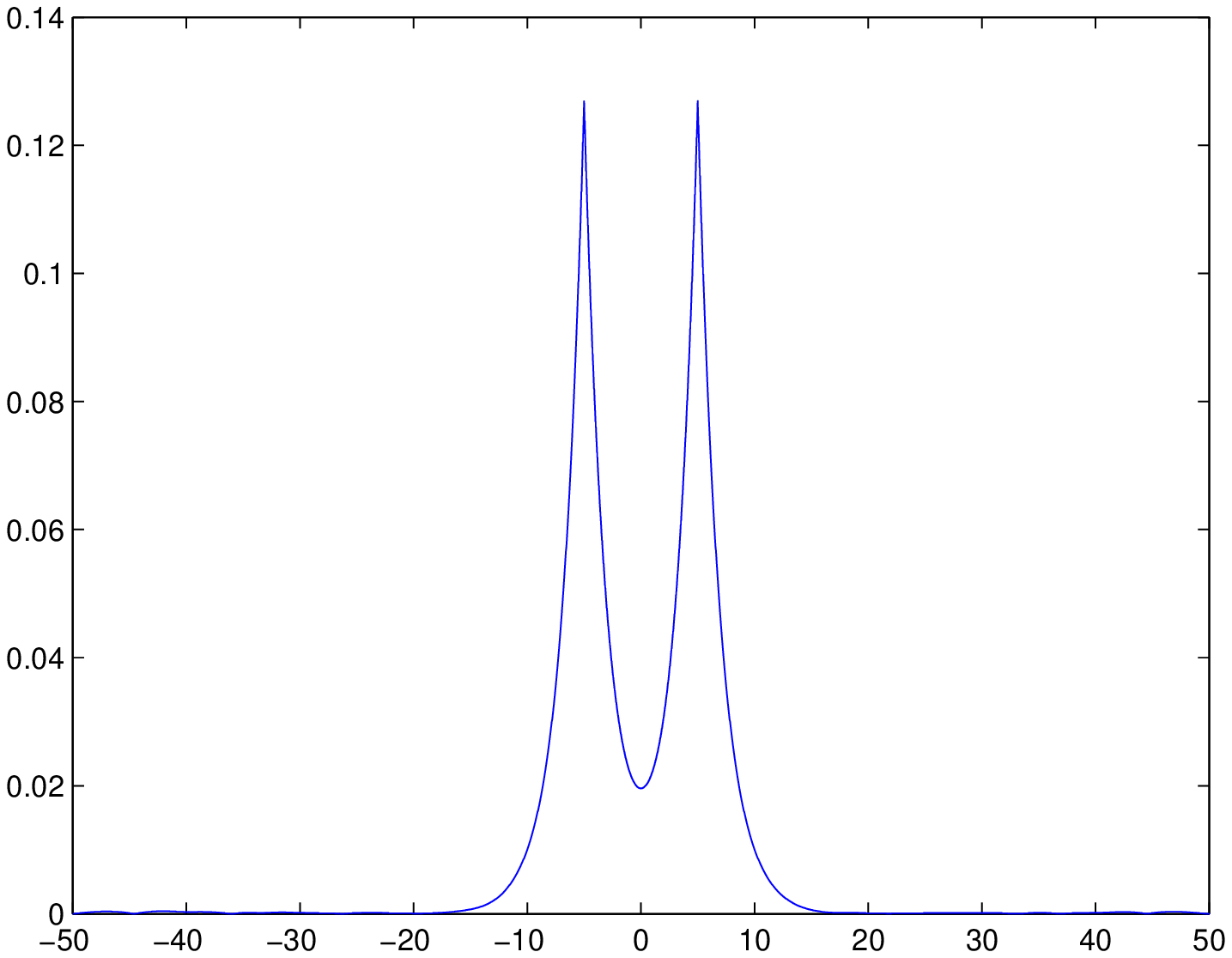} 
\includegraphics[scale=0.25]{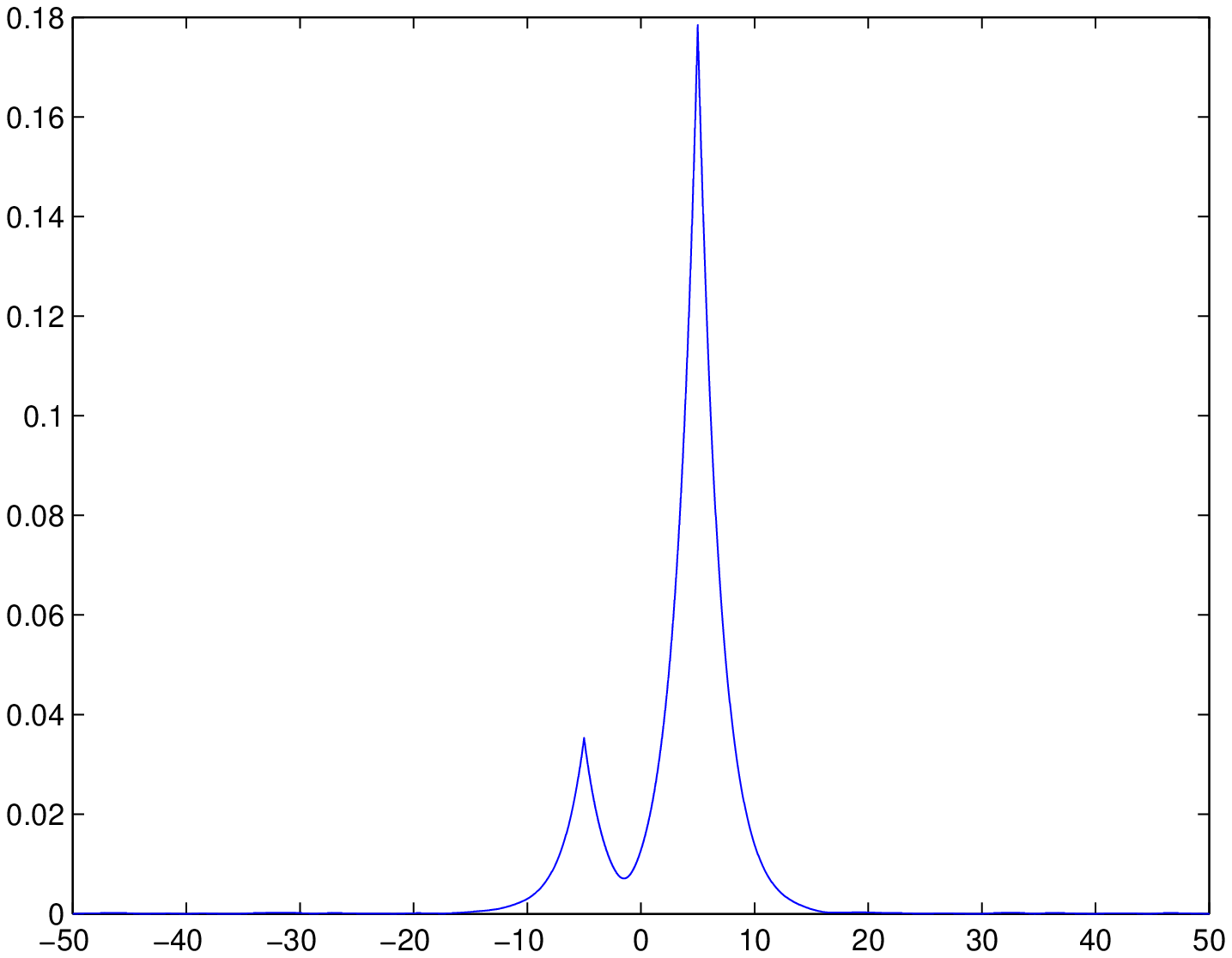}
\begin{center}
\includegraphics[scale=0.4]{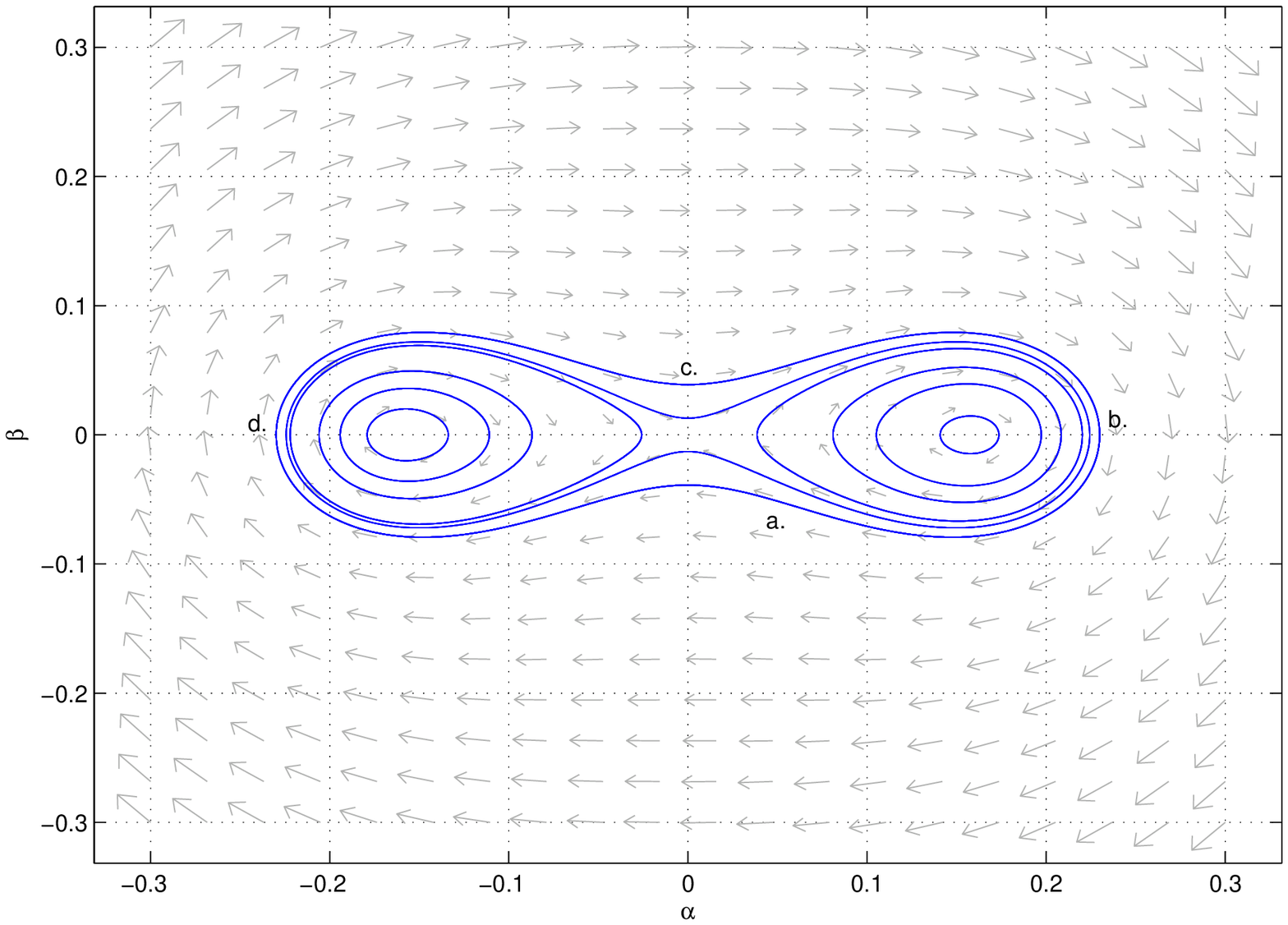}
\end{center}
\caption{A numerical plot of an oscillatory solution to \eqref{eqn:nlsdwp} at times a, b, c, and d respectively from the corresponding phase plane diagram of the finite dimensional Hamiltonian truncation.}
\label{fig:schem}
\end{figure}

{\sc Acknowledgments.}  
JLM was supported, in part, by a  U.S. National Science Foundation Postdoctoral Fellowship and a Hausdorff Center Postdoctoral Fellowship.  MIW was supported, in part, by  U.S. NSF Grants DMS-04-12305 and DMS-07-07850. MIW wishes to thank  Vered Rom-Kedar and Eli Shlizerman for stimulating discussions.  

\subsection{Notation}
 \begin{enumerate}
 \item The spaces $H^s(\RR^n)$, $L^p(\RR^n)$, $W^{k,p}$ are the
   standardly defined Sobolev integration spaces.
 \item We have the $L^2$ inner product:\ $\la f , g \ra\ =\ \int_{\RR^n} f\bar{g}$.
 \item Projection onto the bound states of $-\Delta+V$:
\begin{eqnarray*}
P_j f & = & \langle \psi_j , f \rangle\ \psi_j \ =\  (\pi_j f)\ \psi_j,\ \ j=0,1.
\end{eqnarray*}
\item Projection onto the continuous spectral part of $-\Delta+V$:
\begin{equation}
P_c f\ =\ (I - P_0 - P_1) f.
\nn\end{equation}
\end{enumerate}

\section{Formulation of NLS/GP as a coupled finite-infinite dimensional system}
\label{sec:oscillator-field}

In this section we derive an equivalent formulation of NLS/GP, appropriate for studying the exchange of energy between the bound and  radiative parts of the solution. We substitute the decomposition
 (\ref{decomp}) into NLS / GP and, to the resulting equation, apply the projection operators $P_0, P_1$ and $P_c$ to obtain the coupled system 

\begin{eqnarray}
\label{eqn:sys-id}
\left\{ \begin{array}{c}
i \dot{c}_0 - \Omega_0 c_0 + a_{0000} |c_0|^2 c_0 + a_{0011} (c_1^2 \bar{c}_0 +2 |c_1|^2 c_0)  =  F_0 (c_0,c_1,\bar{c}_0, \bar{c}_1; R, \bar{R}), \\
i \dot{c}_1 - \Omega_1 c_1 + a_{1111} |c_1|^2 c_1 + a_{0011} (c_0^2 \bar{c}_1 +2 |c_0|^2 c_1)  =  F_1 (c_0,c_1,\bar{c}_0, \bar{c}_1; R, \bar{R}), \\
i R_t - H R + P_c F_b (c_0,c_1,\bar{c}_0, \bar{c}_1)  =  P_c F_R (c_0,c_1,\bar{c}_0, \bar{c}_1; R, \bar{R}).
\end{array} \right. 
\end{eqnarray}
Here, 
\begin{eqnarray}
a_{ijkl} = \langle \psi_i \psi_j \psi_k, \psi_l \rangle,
\end{eqnarray}
\begin{eqnarray}
F_j = \pi_j F \label{Fj-def},
\end{eqnarray}
for $j = 0,1$ where
\begin{eqnarray}
F & =&  \left[ 2 |c_0|^2 \psi_0^2 + 2 |c_1|^2 \psi_1^2 + 2 (c_0 \bar{c}_1 + c_1 \bar{c}_0) \psi_0 \psi_1 \right] R  \nn\\
& + & \left[ c_0^2 \psi_0^2 + c_1^2 \psi_1^2 + 2 c_0 c_1 \psi_0 \psi_1 \right] \bar{R} \nn\\
& + & \left[\ \bar{c}_1 \psi_1 + \bar{c}_0 \psi_0 \right] R^2 + \left[ 2 c_0 \psi_0 + 2 c_1 \psi_1\ \right] |R|^2  +  |R|^2 R  , 
\label{eqn:F}
\end{eqnarray}
and
\begin{align}
F_b & =  P_c\ \left[\ |c_0|^2 c_0 \psi_0^3 + (c_0^2 \bar{c}_1+2 |c_0|^2 c_0) \psi_0^2 \psi_1 + (c_1^2 \bar{c_0} + 2 c_0 |c_1|^2) \psi_0 \psi_1^2 + |c_1|^2 c_1 \psi_1^3\ \right], \label{Fperp}\\
F_R & =  P_c \left(\ \left[\ 2 |c_0|^2 \psi_0^2 + 2 |c_1|^2 \psi_1^2 + 2 (c_0 \bar{c}_1 + c_1 \bar{c}_0) \psi_0 \psi_1\ \right] R \right.  \label{G-def}\\
&\ \ \ \ +  \left[\ c_0^2 \psi_0^2 + c_1^2 \psi_1^2 + 2 c_0 c_1 \psi_0 \psi_1 \ \right] \bar{R} \nn\\
&\ \ \ \ +  \left[\ \bar{c}_1 \psi_1 + \bar{c}_0 \psi_0 \right] R^2 + \left[ 2 c_0 \psi_0 + 2 c_1 \psi_1\ \right] |R|^2 \nn\\
& \ \ \ \  +  \left. |R|^2 R\ \right).  \nn
\end{align}
We have that $a_{ijkl} \ne 0\ \iff\ i+j+k+l = 0 \ \text{mod} 2$. For simplicity we take \begin{eqnarray*}
a_{ijkl} = 1 \ \text{if} \ i+j+k+l = 0 \ \text{mod} \ 2.
\end{eqnarray*}
Initial conditions for (\ref{eqn:sys-id}) are:
\begin{equation}
c_j(0)=\la \psi_j , u(\cdot,0) \ra,\ j=0,1;\ \ \ R(\cdot,0) = P_c R(\cdot,0) .
\label{sys-data}
\end{equation}
The system (\ref{eqn:sys-id}) may be viewed as an infinite dimensional Hamiltonian system, comprised of two coupled subsystems: one finite dimensional, governing bound state degrees of freedom described by $c_0$ and $c_1$, and a second, infinite dimensional, governing a dispersive wave field, $R(\cdot,t)=P_c R(\cdot,t)$. In the next section,
we focus on the finite dimensional truncation of (\ref{eqn:sys-id}) obtained by setting $R=0$. After obtaining a detailed description of the phase space of this truncation in section \ref{sec:finite-dim}, we then turn toward proving the long-time persistence of structures within the finite dimensional system, within the full infinite dimensional problem.

Before embarking on this path, we conclude this section with an alternative coordinate description of (\ref{eqn:sys-id}). These coordinates prove very useful in understanding the bifurcations within the finite dynamical subsystem, as $\cN$ is varied, and its participation within the infinite dimensional dynamics.

\subsection{Alternative coordinates}
\label{sec:hamco}

We shall require the following decomposition, proved in Appendix \ref{sec:errorestfd}.

Introduce the following change of coordinates: 
 \begin{equation}
 (\ c_0(t),c_1(t),\bar{c_0}(t),\bar{c_1}(t),R(\cdot,t)\ )\ \mapsto (\ A(t),\alpha(t),\beta(t),\theta(t),R(\cdot,t)\ )\nn\end{equation}
 defined by 
\begin{align}
c_0 (t) &= A(t) e^{i \theta (t)} ,
\label{eqn:rho0}\\
c_1 (t) &= (\alpha(t) + i \beta(t)) e^{i \theta (t)} , \label{eqn:rho1}\\
u(x,t) &= e^{i \theta (t)} (A(t)  \psi_0 + (\alpha(t) + i \beta(t)) \psi_1 + R) . \label{u-new}
\end{align}
Such coordinates have been used for finite dimensional systems in for instance the works \cite{KW}, \cite{SRK}.

Substitution of this ansatz into NLS/GP, we see
\begin{eqnarray*}
(i \dot{A}  - \dot{\theta} A - \Omega_0 A)  \psi_0 + (i \dot{\alpha} - \dot{\beta} - \dot{\theta} (\alpha + i \beta) - (\alpha + i \beta) \Omega_1 )  \psi_1 \\
 + i R_t - HR -\dot{\theta} R = F(A,\alpha,\beta;R,\bar{R}),
\end{eqnarray*}
where $F$ is determined as in \eqref{eqn:F}.
Hence,
\begin{eqnarray*}
\dot{A} & = & \Im (\pi_0 F ), \\
\dot{\alpha} & = & \dot{\theta} \beta + \Omega_1 \beta + \Im (\pi_1 (F)), \\
\dot{\beta} & = & -\dot{\theta} \alpha - \Omega_1 \alpha - \Re (\pi_1 (F)), \\
\dot{\theta} & = & - \Omega_0 - A^{-1} \Re( \pi_0 (F)), \\
i R_t & = & H R + \dot{\theta} R + P_c (F)
\end{eqnarray*}
or
\begin{eqnarray*}
\dot{A} & = & \Im (\pi_0 F), \\
\dot{\alpha} & = & (- \Omega_0 - A^{-1} \pi_0 (F)) \beta + \Omega_1 \beta + \Im (\pi_1 (F)), \\
\dot{\beta} & = & -(- \Omega_0 - A^{-1} \Re( \pi_0 (F))) \alpha - \Omega_1 \alpha - \Re (\pi_1 (F)), \\
\dot{\theta} & = & - \Omega_0 - A^{-1} \Re (\pi_0 (F)), \\
i R_t & = & (H - \Omega_0) R - A^{-1} \Re( \pi_0 (F)) R + P_c (F).
\end{eqnarray*}
Note, there will be linear terms in $R$ and $\bar{R}$ contained in $F$, which we must handle very carefully in our eventual iteration argument.

This leads to the system of the form
\begin{eqnarray*}
\dot{A} & = & - 2 \alpha \beta A + \text{Error} (R, \bar{R}; A, \alpha, \beta), \\
\dot{\alpha} & = & [\Omega_1 - (\alpha^2 + \beta^2) - A^2 + \dot{\theta}] \beta + \text{Error} (R, \bar{R}; A, \alpha, \beta) , \\
\dot{\beta} & = & - [ \Omega_1 - (\alpha^2 + \beta^2 + A^2) -2 A^2 + \dot{\theta}] \alpha + \text{Error} (R, \bar{R}; A, \alpha, \beta) , \\
\dot{\theta} & = & -\Omega_0 + A^2 + (3 \alpha^2 + \beta^2) + A^{-1}
\text{Error} (R, \bar{R}; A, \alpha, \beta) , \\
i R_t & = & (H - \Omega_0) R - A^{-1} \Re( \pi_0 (F))  R + P_c F_b (A, \alpha, \beta) + P_c F_R (A, \alpha, \beta; R, \bar{R}).
\end{eqnarray*}

Then, substituting $\dot{\theta}$, the system becomes
\begin{eqnarray}
\label{eqn:coupledfd1}
\dot{A} & = & - 2 \alpha \beta A + \text{Error}_A (R, \bar{R}; A, \alpha, \beta), \\
\label{eqn:coupledfd2}
\dot{\alpha} & = & [\Omega_1 -\Omega_0 + 2 \alpha^2 ] \beta + \text{Error}_\alpha (R, \bar{R}; A, \alpha, \beta) , \\
\label{eqn:coupledfd3}
\dot{\beta} & = & - [ \Omega_1 -\Omega_0 - 2 A^2 + 2 \alpha^2 ] \alpha + \text{Error}_\beta (R, \bar{R}; A, \alpha, \beta) , \\
\label{eqn:coupledfd4}
\dot{\theta} & = & -\Omega_0 + A^2 + 3 \alpha^2 + \beta^2 +
\text{Error}_\theta (R, \bar{R}; A, \alpha, \beta) , \\
\label{eqn:coupledid}
i R_t & = & (H - \Omega_0) R + (A^2 + 3 \alpha^2 + \beta^2) R + P_c F_b (A, \alpha, \beta) + P_c F_R (A, \alpha, \beta ; R, \bar{R}),
\end{eqnarray}
with $P_c R = R$.

For simplicity, set $\vec{\sigma} = (\alpha, \beta, A)$.
\begin{prop}
Assume $A,\dots, R$ defined as in \eqref{eqn:coupledfd1}-\eqref{eqn:coupledid}. Then, we have 
\begin{eqnarray}
\label{eqn:est-ERROR-A}
|\text{Error}_A| & \lesssim & |\vec{\sigma}|^2 \| R \|_{L^\infty} + |\vec{\sigma}| \| R \|_{L^\infty}^2 + \| R \|_{L^\infty}^3, \\ 
\label{eqn:est-ERROR-alpha}
|\text{Error}_\alpha| & \lesssim & |\vec{\sigma}|^2 \| R \|_{L^\infty} + |\vec{\sigma}| \| R \|_{L^\infty}^2 + \| R \|_{L^\infty}^3 , \\
\label{eqn:est-ERROR-beta}
|\text{Error}_\beta| & \lesssim & |\vec{\sigma}|^2 \| R \|_{L^\infty} + |\vec{\sigma}| \| R \|_{L^\infty}^2 + \| R \|_{L^\infty}^3, \\
\label{eqn:est-ERROR-theta}
|\text{Error}_\theta| & \lesssim & A^{-1} \left( |\vec{\sigma}|^2 \| R \|_{L^\infty} + |\vec{\sigma}| \| R \|_{L^\infty}^2 + \| R \|_{L^\infty}^3 \right), \\
\label{eqn:est-ERROR-fperp}
|P_c F_b| & = & \left| P_c \left[ A^3 \psi_0^3 + (\alpha^2 + \beta^2) (\alpha + i \beta) \psi_1^3 + A (\alpha + i \beta)^2 \psi_1^2 \psi_0 \right. \right.\\
& + & \left.\left.  2 A (\alpha^2 + \beta^2) \psi_1^2 \psi_0 + A^2 (\alpha - i \beta) \psi_0^2 \psi_1 + 2 A^2 (\alpha + i \beta) \psi_0^2 \psi_1 \right] \right|, \nn \\
\label{eqn:est-ERROR-gperp}
|P_c F_R| & = & | P_c \left( \left[  2 A^2 \psi_0^2 + 4 A(\alpha) \psi_0 \psi_1 + 2 (\alpha^2 + \beta^2) \psi_1^2  \right] R \right.  \\
& + & \left[ A^2 \psi_0^2 + (\alpha + i \beta)^2 \psi_1^2 + 2 A (\alpha + i \beta) \psi_0 \psi_1  \right] \bar{R}  \nn \\
 &+& \left. \left[ A \psi_0 + (\alpha - i \beta) \psi_1 \right] R^2 + \left[ 2 A \psi_0 + 2 (\alpha + i \beta) \psi_1 \right] |R|^2 + |R|^2 R \right)|. \nn
\end{eqnarray}
The error terms in $\alpha$ and $\beta$ gained from $\dot{\theta}$ are of the form $A^{-1} \alpha$, $A^{-1} \beta \ll 1$.  
\end{prop}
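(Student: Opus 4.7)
The plan is to prove the proposition by direct substitution followed by systematic bookkeeping of terms, organized by homogeneity in $R$. Starting from the explicit formula \eqref{eqn:F} for $F$ and the change of variables $c_0 = Ae^{i\theta}$, $c_1 = (\alpha + i\beta)e^{i\theta}$, I would write $F = e^{i\theta}\bigl(F^{(0)} + F^{(1)}[R,\bar R]\bigr)$, where $F^{(0)}$ collects all terms polynomial in $(A,\alpha,\beta)$ independent of $R$, and $F^{(1)}$ collects the remaining terms linear, quadratic, and cubic in $(R,\bar R)$. The overall phase factor $e^{i\theta}$ pulls through the equations uniformly because the new equations for $\dot A$, $\dot\alpha$, $\dot\beta$, $\dot\theta$ were obtained precisely by equating the coefficients of $e^{i\theta}\psi_j$; this is the rationale for the modulation ansatz and ensures that the remainder $R$ enters the dynamics only through $\pi_j F$ for $j=0,1$ and through $P_c F$ for the dispersive equation.

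From this decomposition, the formulas \eqref{eqn:est-ERROR-fperp}--\eqref{eqn:est-ERROR-gperp} for $|P_cF_b|$ and $|P_cF_R|$ follow by plugging $c_0 = Ae^{i\theta}$, $c_1 = (\alpha+i\beta)e^{i\theta}$ into \eqref{Fperp} and \eqref{G-def}, expanding the monomials, and taking absolute values so that the $e^{i\theta}$ phase cancels. The cubic terms $A^3\psi_0^3$, $(\alpha+i\beta)(\alpha^2+\beta^2)\psi_1^3$, and the cross-terms $A^2(\alpha\pm i\beta)\psi_0^2\psi_1$ and $A(\alpha\pm i\beta)^2 \psi_0\psi_1^2$ are read off directly; likewise for the $R$-dependent expression.

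The error bounds \eqref{eqn:est-ERROR-A}--\eqref{eqn:est-ERROR-beta} are then obtained by showing that $\text{Error}_A = \Im \pi_0 F^{(1)}$, $\text{Error}_\alpha = \Im \pi_1 F^{(1)} + (\text{correction from } \dot\theta)$, and $\text{Error}_\beta = -\Re \pi_1 F^{(1)} + (\text{correction from }\dot\theta)$. Each monomial in $F^{(1)}$ has the form $(\text{quadratic in }\vec\sigma)\cdot\psi_i\psi_j \cdot R^{\#}$ plus $(\text{linear in }\vec\sigma)\cdot\psi_i\cdot R^2$ or $|R|^2$, plus $|R|^2R$. Since $\pi_j$ is pairing against $\psi_j \in L^1 \cap L^\infty$, Hölder's inequality with the obvious pairing $L^1 \times L^\infty$ yields
\begin{equation*}
|\pi_j F^{(1)}| \lesssim |\vec\sigma|^2\|R\|_{L^\infty} + |\vec\sigma|\|R\|_{L^\infty}^2 + \|R\|_{L^\infty}^3,
\end{equation*}
which is exactly the claimed bound. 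For \eqref{eqn:est-ERROR-theta}, the equation for $\theta$ carries an extra factor of $A^{-1}$ (since $\dot\theta = -\Omega_0 - A^{-1}\Re\pi_0 F$), producing the $A^{-1}$ prefactor in the bound. The corrections coming from substituting $\dot\theta$ back into the $\dot\alpha, \dot\beta$ equations are of the form $A^{-1}\alpha$ or $A^{-1}\beta$ times a factor already bounded by the right-hand side; provided $A$ stays bounded away from zero (which will be assumed in the regime near the symmetric and asymmetric fixed points, where $A \sim \sqrt{\cN/2}$), these extra terms are absorbed into the stated bound.

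The main technical obstacle is organizational rather than deep: one must verify that the phase corrections from $\dot\theta$ do not produce terms of lower order in $\vec\sigma$ or higher order in $\|R\|_{L^\infty}$ than claimed, and that the $A^{-1}$ singularity is harmless in the regime of interest. Once the monomials are systematically enumerated and the Hölder estimates applied uniformly, all six bounds follow; the calculation is routine but must be carried out carefully to confirm that no term is missed. This is done in Appendix \ref{sec:errorestfd} as indicated in the statement preceding the proposition.
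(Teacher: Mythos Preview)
Your proposal is correct and follows essentially the same approach as the paper: the proof in the paper consists of the single sentence ``These follow directly from the computations in Appendix~\ref{sec:errorestfd},'' and that appendix carries out precisely the direct substitution and term-by-term bookkeeping you describe, writing $F$ explicitly in the variables $(A,\alpha,\beta)$, projecting onto $\psi_0$, $\psi_1$, and $P_c$, and reading off the error terms. Your organization by homogeneity in $R$ and the use of H\"older against $\psi_j\in L^1$ is exactly the mechanism implicit in the appendix's explicit expressions.
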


\begin{proof}
These follow directly from the computations in Appendix \ref{sec:errorestfd}.
\end{proof}

For convenience, we rewrite the system \eqref{eqn:coupledfd1}-\eqref{eqn:coupledid} in compact form as:
\begin{eqnarray}
\label{eqn:simpcoupledsystem}
\left\{ \begin{array}{c}
\dot{\vec{\sigma}} = \vec{F}_{FD} (\vec{\sigma}) + \vec{G}_{FD} (\vec{\sigma};R,\bar{R}), \\
\dot{\theta} = -\Omega_0 + A^2 + 3 \alpha^2 + \beta^2 + G_\theta (R,
\bar{R}; A, \alpha, \beta) , \\
(i \p_t -(H - \Omega_0)  - (A^2 + 3 \alpha^2 + \beta^2)) R = P_c F_{b}
(\vec{\sigma}) + P_cF_R (\vec{\sigma}; R, \bar{R}) .
\end{array} \right.
\end{eqnarray}

For simplicity, we take $F_b = P_c F_b$ and $F_R = P_c F_R$ in the sequel.

\section{Phase space of the finite dimensional Hamiltonian truncation of NLS/GP}
\label{sec:finite-dim}

In this section we study the finite-dimensional system obtained by setting the dispersive part of the solution, $R(x,t)$, equal to zero in (\ref{eqn:sys-id}). We denote the solution of the resulting system by $(\rho_0(t),\rho_1(t))\in \CC^2$:
\begin{eqnarray}
\label{eqn:sys-fd}
\left\{ \begin{array}{c}
i \dot{\rho_0} = \Omega_0 \rho_0 - ( \rho_0^2 \bar{\rho_0} + 2 \rho_1 \bar{\rho_1} \rho_0 + \rho_1^2 \bar{\rho_0}), \\
i \dot{\bar{\rho_0}} = -\Omega_0 \bar{\rho_0} + ( \bar{\rho_0}^2 \rho_0 + 2 \rho_1 \bar{\rho_1} \bar{\rho_0} + \bar{\rho_1}^2 \rho_0) , \\
i \dot{\rho_1} = \Omega_1 \rho_1 - ( \rho_1^2 \bar{\rho_1} +  2 \rho_0
\bar{\rho_0} \rho_1 +  \rho_0^2 \bar{\rho_1}) , \\
i \dot{\bar{\rho_1}} = -\Omega_1 \bar{\rho_1} + (\bar{\rho_1}^2 \rho_1 + 2 |\bar{\rho_0}|^2 \bar{\rho_1} +  \bar{\rho_0}^2 \rho_1) .
\end{array} \right. 
\end{eqnarray}
Symmetry breaking for a system of this type arising from a general
class of {\it defocusing}
  nonlinearities was considered recently in \cite{Sac}.

This is a two degree of freedom Hamiltonian system, with time-translation and phase invariances inherited from NLS/GP. The associated  time-conserved Hamiltonian and $L^2$ (optical power or particle number) functionals are:
\begin{align}
H & =  \Omega_0 |\rho_0|^2 + \Omega_1 |\rho_1|^2 - \frac{1}{2} |\rho_0|^4 - \frac{1}{2} |\rho_1|^4 - 2 |\rho_1|^2 |\rho_0|^2 - \frac{1}{2} (\rho_1^2 \bar{\rho_0}^2 + \bar{\rho_1}^2 \rho_0^2) , \label{Ndef}\\
N & =  |\rho_0|^2 + |\rho_1|^2 .\label{Hdef}
\end{align}
In terms of $H$, system (\ref{eqn:sys-fd}) can be expressed
 in Hamiltonian form
\begin{eqnarray}
i \partial_t \vec{\rho} = J \nabla_{\vec{\rho}} H,
\end{eqnarray}
where
\begin{eqnarray}
\vec{\rho} = \left[ \begin{array}{c}
\rho_0 \\
\bar{\rho_0} \\
\rho_1 \\
\bar{\rho_1}
\end{array} \right],\ \ \ \  J = \left[ \begin{array}{cccc} 
0 & 1 & 0 & 0 \\
-1 & 0 & 0 & 0 \\
0 & 0 & 0 & 1 \\
0 & 0 & -1 & 0 
\end{array} \right].
\end{eqnarray}

In terms of the alternative coordinates of section (\ref{sec:hamco}), 
\begin{equation}
\rho_0=Ae^{i\theta},\ \rho_1=(\alpha+i\beta)e^{i\theta},
\nn\end{equation}
hence the system (\ref{eqn:sys-fd}) takes the form:
\begin{eqnarray}
\label{eqn:ode-sys-alpha} 
\left\{ \begin{array}{c}
\dot{\alpha} = \left[\ \Omega_1+\dot{\theta}-(\alpha^2+\beta^2) -A^2\ \right]\ \beta , \\
\dot{\beta} = -\left[\ \Omega_1+\dot{\theta}-(\alpha^2+\beta^2)-3A^2\ \right] \alpha ,  \\
\dot{A} = -2 \alpha \beta A , \\
\dot{\theta} = - \Omega_0 + A^2+3\alpha^2+\beta^2 .
\end{array} \right.
\end{eqnarray}
Recall that, for simplicity, we have set  $a_{ijkl} = 1$ for all $i$, $j$, $k$, $l = 0$, $1$. Note that $\theta$ completely decouples from the $A,\alpha,\beta$ equations, meaning we have
\begin{eqnarray}
\label{eqn:alpha-reduced1}
\left\{ \begin{array}{c}
\dot{\alpha} = \left[\ \Omega_1- \Omega_0 +2 \alpha^2 \ \right]\ \beta , \\
\dot{\beta} = -\left[\ \Omega_1- \Omega_0 -2 A^2+2 \alpha^2\ \right] \alpha , \\
\dot{A} = -2 \alpha \beta A , 
\end{array} \right.
\end{eqnarray}
where now the $\theta$ equation is decoupled to give
\begin{eqnarray}
\label{eqn:theta-decoupled}
\dot{\theta} = - \Omega_0 + A^2+3\alpha^2+\beta^2 .
\end{eqnarray}

One can verify changing coordinates in (\ref{Ndef}-\ref{Hdef}), or directly from (\ref{eqn:ode-sys-alpha}), the time-conserved quantities: 
\begin{eqnarray}
N & = & A^2 + \alpha^2 + \beta^2, \\
H & = & \Omega_0 A^2 + \Omega_1 (\alpha^2 + \beta^2)
 - \frac{1}{2}A^4  - \frac{1}{2} (\alpha^2 + \beta^2)^2 - 2 A^2 (\alpha^2 + \beta^2) - A^2 (\alpha^2 - \beta^2).
\end{eqnarray}

We obtain a closed system for $(\alpha,\beta)$ using that $N = A^2 + \alpha^2 + \beta^2$ is conserved.  Then, we may reduce the system to
\begin{align}
\label{eqn:ode-sys-alpha-reduced}
\dot{\alpha} &= \left[\ \Omega_1- \Omega_0 + 2 \alpha^2 \right]\ \beta , \\
\dot{\beta} &= -\left[\ \Omega_1- \Omega_0 + 2( 2\alpha^2 +  \beta^2 - N) \ \right] \alpha .\nn 
\end{align}
In addition, the system has the conserved quantity
\begin{eqnarray*}
H & = & N (\Omega_0 - \frac{N}2) + \Omega_{10} (\alpha^2 + \beta^2) -2 N \alpha^2 + (\alpha^2 + \beta^2) + \alpha^4 - \beta^4.
\end{eqnarray*}

Now, let us define the matrices $B_{eq}$ and $\tilde{B}_{eq}$ to be those related to linearization about the equilibrium solution $(A_{eq},\alpha_{eq},\beta_{eq},\theta_{eq})$ and the decoupled reduced system  $(A_{eq},\alpha_{eq},\beta_{eq})$ respectively.  Similarly, let $M(t)$ and $\tilde{M}$ be the resulting monodromy matrices for nearby time dependent periodic orbits $(A(t),\alpha(t),\beta(t),\theta (t))$ and the decoupled reduced system  $(A (t),\alpha(t),\beta(t))$ respectively. 

In the following section, we actually discuss the relevant bounds on the operators $e^{B_{eq} t}$ and $e^{\tilde{B} t}$.  

 \subsection{Bifurcation of equilibria for \eqref{eqn:alpha-reduced1} and Symmetry Breaking in NLS/GP}
\label{sec:fd-phasespace}

 Recall that $N=N[A,\alpha,\beta]$  is a constant of the motion for the system (\ref{eqn:ode-sys-alpha}), corresponding to the physical quantities optical power or particle number. Thus it is natural to explore the nature of the phase space restricted to the level sets of $N$.
We are interested in time-periodic states of frequency $\Omega$, corresponding in the physical space to solutions of NLS/GP of the form 
 $u(x,t)=e^{-i\Omega t}U$. Thus, we transform the system to a rotating frame by setting
 \begin{equation}
 \theta(t) \ =\ \Theta(t) - \Omega t
 \label{Theta-def}
 \end{equation}
 and obtain 
 \begin{align}
\label{eqn:alpha-rotate}
\dot{\alpha} &= \left[\ \Omega_1-\Omega+\dot\Theta(t)-(\alpha^2+\beta^2) -A^2\ \right]\ \beta ,\\
\dot{\beta} &= -\left[\ \Omega_1-\Omega+\dot\Theta(t)-(\alpha^2+\beta^2)-3A^2\ \right] \alpha ,\\
\dot{A} &= -2 \alpha \beta A , \\
\dot{\Theta} &= \Omega- \Omega_0 + A^2+3\alpha^2+\beta^2.
\end{align}
 The states we seek are equilibria in this rotating frame. Thus, we have
 \begin{align}
\label{e-rotate.a}
& \left[\ \Omega_1-\Omega-(\alpha^2+\beta^2) -A^2\ \right]\ \beta = 0,\\
&\left[\ \Omega_1-\Omega-(\alpha^2+\beta^2)-3A^2\ \right] \alpha = 0,\label{e-rotate.b} \\
&\alpha \beta A = 0,\label{e-rotate.c}\\
&\Omega- \Omega_0 + A^2+3\alpha^2+\beta^2 = 0,
\label{e-rotate.d}
\end{align}
whose solutions we consider on the level set 
\begin{equation}
A^2+\alpha^2+\beta^2=N.
\label{e-rotate.e}\end{equation}
It is easy to observe an equilibrium corresponding to the\\
{\bf Symmetric states:}
 \begin{equation}
{\rm For\ }\ N\ge0:\ \ \ A_*=N^{\frac{1}{2}},\ \alpha_*=\beta_*=0,\ \Omega_*=\Omega_0-N.\label{sym-eq}\end{equation}
Via (\ref{u-new}) we identify this with the symmetric ground state of NLS/GP;
\begin{equation}
u(x,t)\ \sim\ N^{\frac{1}{2}}\ e^{i(-\Omega_0+N)t}\ \psi_0(x).
\nn\end{equation} 
{\it Due to its correspondence with the symmetric state, we refer
to this equilibrium of the finite dimensional reduction, the symmetric equilibrium.}

A second, bifurcating family can be found explicitly as follows. 
 Define
\begin{equation}
N^{FD}_{cr}\ =\ \frac{\Omega_{10}}{2}\ =\ \frac{\Omega_1-\Omega_0}{2}.
\label{Ncr-def}
\end{equation}
\begin{rem}
Had we set the nonlinearity coefficient $g=-1$ and the ``interaction weights'', $a_{ijkl}$ equal to one, we would have
\begin{eqnarray}
N_{cr}^{FD} = \frac{\Omega_{10}}{g(\int \psi_0^4 dx -3 \int \psi_0^2 \psi_1^2 dx)} > 0;
\end{eqnarray}
which is easily observed by comparison to a single well potential symmetric state for $L$ sufficiently large (see \cite{KKSW}).\footnote{Though $N_{cr}^{FD}$ is a good first order approximation to $N_{crit}$, the true symmetry breaking point in the nonlinear problem \eqref{eqn:nlsdwp}.  As in this note we will be studying existence of solutions close to those described by the finite dimensional dynamcis of \eqref{eqn:sys-fd}, we will work from here on using $N_{cr} = N_{cr}^{FD}$. }
\end{rem}

Using (\ref{e-rotate.d}) we first eliminate $\Omega$ from  (\ref{e-rotate.a}) and obtain $\left[\ \Omega_1-\Omega_0+2\alpha^2\ \right]\beta = 0$.\\ 
Since $\Omega_1-\Omega_0>0$ we conclude $\beta=0$. Thus, (\ref{e-rotate.c}) is satisfied. We now use (\ref{e-rotate.d}) again to eliminate $\Omega$ from (\ref{e-rotate.b}). Thus we have, since $\beta=0$
\begin{align}
A^2\ -\ \alpha^2\ &=\ N^{FD}_{cr},\nn\\
A^2\ +\ \alpha^2\ &=\ N\nn.
\end{align}
Solving for $A$ and $\alpha$ we have the following equilibria, corresponding to symmetry broken states, which bifurcate for at $N=N^{FD}_{cr}$: 

\noindent{\bf Symmetry broken states:}
\begin{align}
&{\rm For\ }\ N\ge N^{FD}_{cr}\equiv\frac{\Omega_{10}}{2}:\nn\\ 
&\ \ \ \  A_*=\left( \frac{N+N^{FD}_{cr}}{2} \right)^{\frac12},\ \ \alpha_*=\left( \frac{N-N^{FD}_{cr}}{2} \right)^{\frac12},\ \ \beta_*=0, \ \ \  
\Omega_*=-2\Omega_0+N^{FD}_{cr}-N.
\label{sym-brk}\end{align}
Via (\ref{u-new}) we identify this with the asymmetric ground state of NLS/GP;
\begin{equation}
u(x,t)\ \sim\ e^{i(-2\Omega_0+N^{FD}_{cr}-N)t}
\left(\ \left( \frac{N+N^{FD}_{cr}}{2} \right)^{\frac12}\ \psi_0(x)\ +\ 
 \left( \frac{N-N^{FD}_{cr}}{2} \right)^{\frac12}\ \psi_1(x)\ \right).
\nn\end{equation} 
{\it Due to its correspondence with the asymmetric (symmetry-broken) states of NLS/GP, we refer to this equilibria of the finite dimensional reduction as asymmetric equilibria.}
\subsection{Stability of equilibria;  finite dimensional analysis}
\label{subsec:stability-finite-dim}
 We consider the stability of the various solution branches obtained in the previous section.  We rewrite the system (\ref{eqn:alpha-rotate}),
 using the last equation to eliminate $-\Omega+\dot\Theta$ from the equations for $\alpha$ and $\beta$. Thus we have
 \begin{align}
\label{syst-rot}
\left\{ \begin{array}{c}
\dot{\alpha} = \left[\ \Omega_{10}+2\alpha^2\ \right]\ \beta , \\
\dot{\beta} = -\left[\ \Omega_{10}+2\alpha^2-2A^2\ \right] \alpha , \\
\dot{A} = -2 \alpha \beta A , \\
\dot{\Theta} = \Omega- \Omega_0 + A^2+3\alpha^2+\beta^2 .
\end{array} \right.
\end{align}
  Note that in these coordinates the equations for $\alpha$, $\beta$ and $A$ decouple from the equation for $\Theta$.

The finite dimensional system has a phase portrait, equivalent to \eqref{eqn:sys-fd}, see Figure \ref{fig7}. In particular, we observe elliptic and hyperbolic equilibria, and periodic orbits. We now embark on detailed linear stability analysis of these states.

Linearization about an arbitrary solution 
\begin{eqnarray}
(\alpha_*(t), \beta_*(t), A_*(t), \theta_*(t))
\end{eqnarray}
 gives the linearized perturbation equation 
\begin{eqnarray}
\label{eqn:sys-lin-alpha}
\partial_t \left[ \begin{array}{c}
\delta \alpha \\
\delta \beta \\
\delta A \\
\delta \theta
\end{array} \right] & = & \left[ \begin{array}{cccc} 
4 \alpha_* \beta_* & (\Omega_{10} + 2 \alpha_*^2) & 0 & 0 \\
-(\Omega_{10} +6\alpha_*^2-2A_*^2) & 0 & -4 \alpha_*A_* & 0 \\
- 2 A_* \beta_* & -2 \alpha_* A_* & 2 \alpha_* \beta_* & 0 \\
6\alpha_* & 2\beta_* & 2A_* & 0
\end{array} \right]  \left[ \begin{array}{c}
\delta \alpha \\
\delta \beta \\
\delta A \\
\delta \theta
\end{array} \right] \\
& = & B(t) \left[ \begin{array}{c}
\delta \alpha \\
\delta \beta \\
\delta A \\
\delta \theta
\end{array} \right] .
\end{eqnarray}
Since the evolution of $\alpha$, $\beta$ and $A$, decouple from that for $\Theta$, we consider the behavior of the reduced system
\begin{equation}
\partial_t \left[ \begin{array}{c}
\delta \alpha \\
\delta \beta \\
\delta A 
\end{array} \right] =  \tilde{B} (t) \left[ \begin{array}{c}
\delta \alpha \\
\delta \beta \\
\delta A
\end{array} \right] =  
\left[ \begin{array}{ccc} 
4 \alpha_* \beta_* & 2(N_{cr}^{FD} + \alpha_*^2) & 0  \\
-2(N_{cr}^{FD} +3\alpha_*^2-A_*^2) & 0 & -4 \alpha_*A_*  \\
- 2 A_* \beta_* & -2 \alpha_* A_* & -2 \alpha_* \beta_* \\
\end{array} \right]  \left[ \begin{array}{c}
\delta \alpha \\
\delta \beta \\
\delta A
\end{array} \right].\label{red-pert-eqn}\end{equation}
Then, $\tilde{B}$ is the $3$ by $3$ block in matrix of equation (\ref{eqn:sys-lin-alpha}). In obtaining (\ref{red-pert-eqn}), we used that $N_{cr}=\Omega_{10}/2$. 

{\bf Linearized dynamics about the symmetric equilibrium state:}

For the symmetric equilibrium, as displayed in (\ref{sym-eq}) we have 
\begin{eqnarray}
\label{eqn:eqsym}
(\alpha^{eq}_{-}, \beta^{eq}_{-}, A^{eq}_{-}, \theta^{eq}_{-} (t)) = (0,0,N^{\frac{1}{2}},(-\Omega_0 + N)t).
\end{eqnarray}
Hence,
\begin{eqnarray*}
B = B_{-} = 2 \left[ \begin{array}{cccc} 
0 & N^{FD}_{cr}  & 0 & 0 \\
N-N_{cr}^{FD} & 0 & 0 & 0 \\
0 & 0 & 0 & 0 \\
0 & 0 & N^{\frac{1}{2}} & 0 
\end{array} \right] .
\end{eqnarray*}

For the reduced system, we have
\begin{equation}\label{tB-symm}
\tilde{B}_{-}  =  2\left[ \begin{array}{ccc} 
0 & N^{FD}_{cr}  & 0 \\
N-N_{cr}^{FD} & 0 & 0 \\
0 & 0 & 0 
\end{array} \right], \\
\end{equation}
whose eigenvalues and implied linear stability character is as follows:
\begin{align}
&N<N_{cr}^{FD}:\ \ 
 \lambda_0=0,\ \ \lambda_\pm(N)=\pm i 2
 \left[\ (N_{cr}^{FD} -N)\ N_{cr}^{FD}\ \right]^{\frac12},\ \ \ 
 N<N_{cr}^{FD}\ \ \ ({\rm stable\ elliptic\ point}), \label{tB-symm-eig} \\
& N>N_{cr}^{FD}:\ \  
 \lambda_0=0,\ \ \lambda_\pm(N)=\pm 2 \left[\ (N-N_{cr}^{FD})\ N_{cr}^{FD}\ \right]^{\frac12},\ \ \ N>N_{cr}^{FD},\ \ \ ({\rm unstable\ saddle}).\label{tB-symm-eig-unstable}
\end{align}
see Fig. \ref{fig7}.  {\it Thus, the symmetric state transitions from stable to unstable as $N$ increases beyond $N_{cr}^{FD}$.}

\begin{figure}
\includegraphics[scale=0.25]{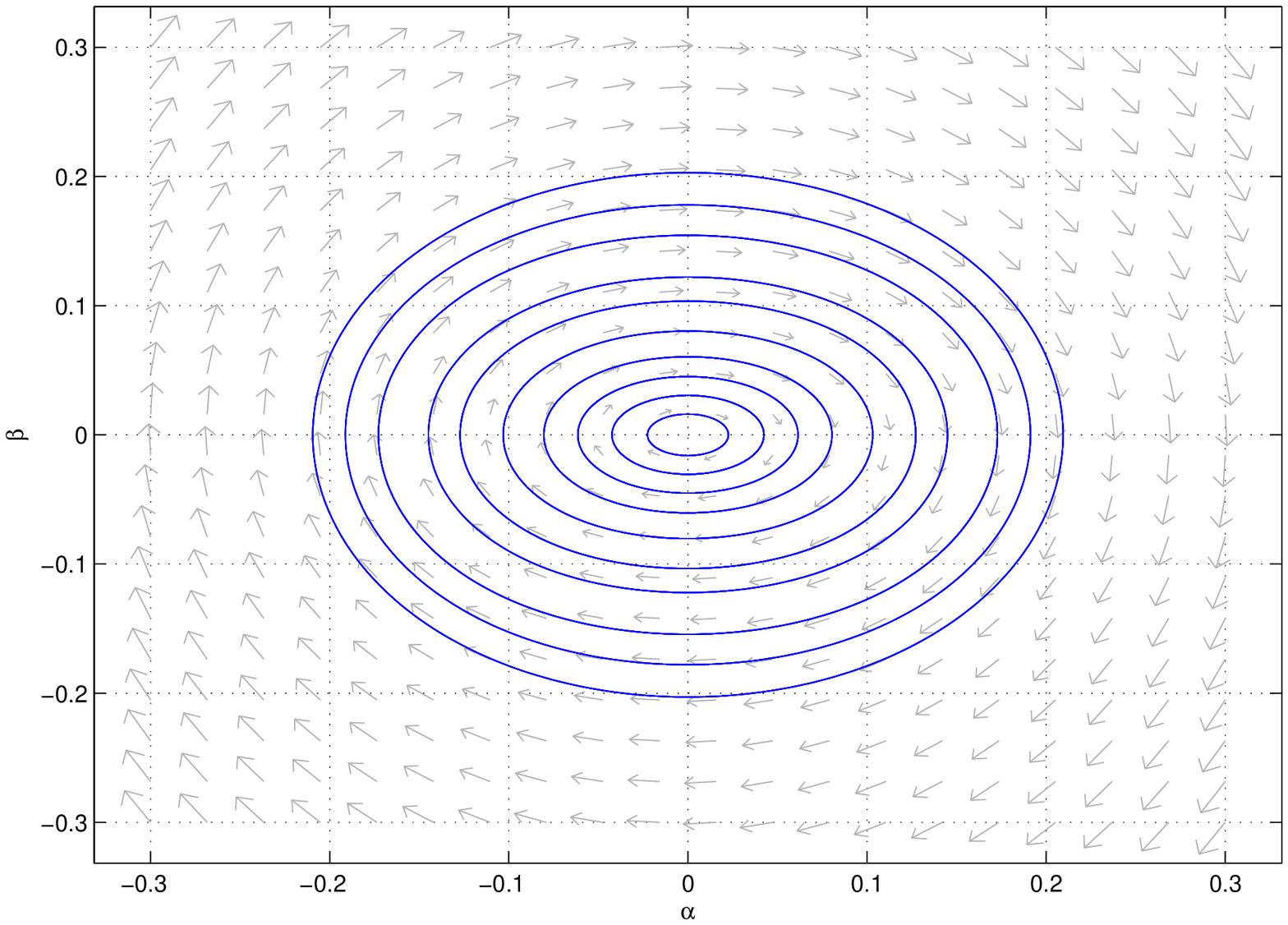}
\includegraphics[scale=0.25]{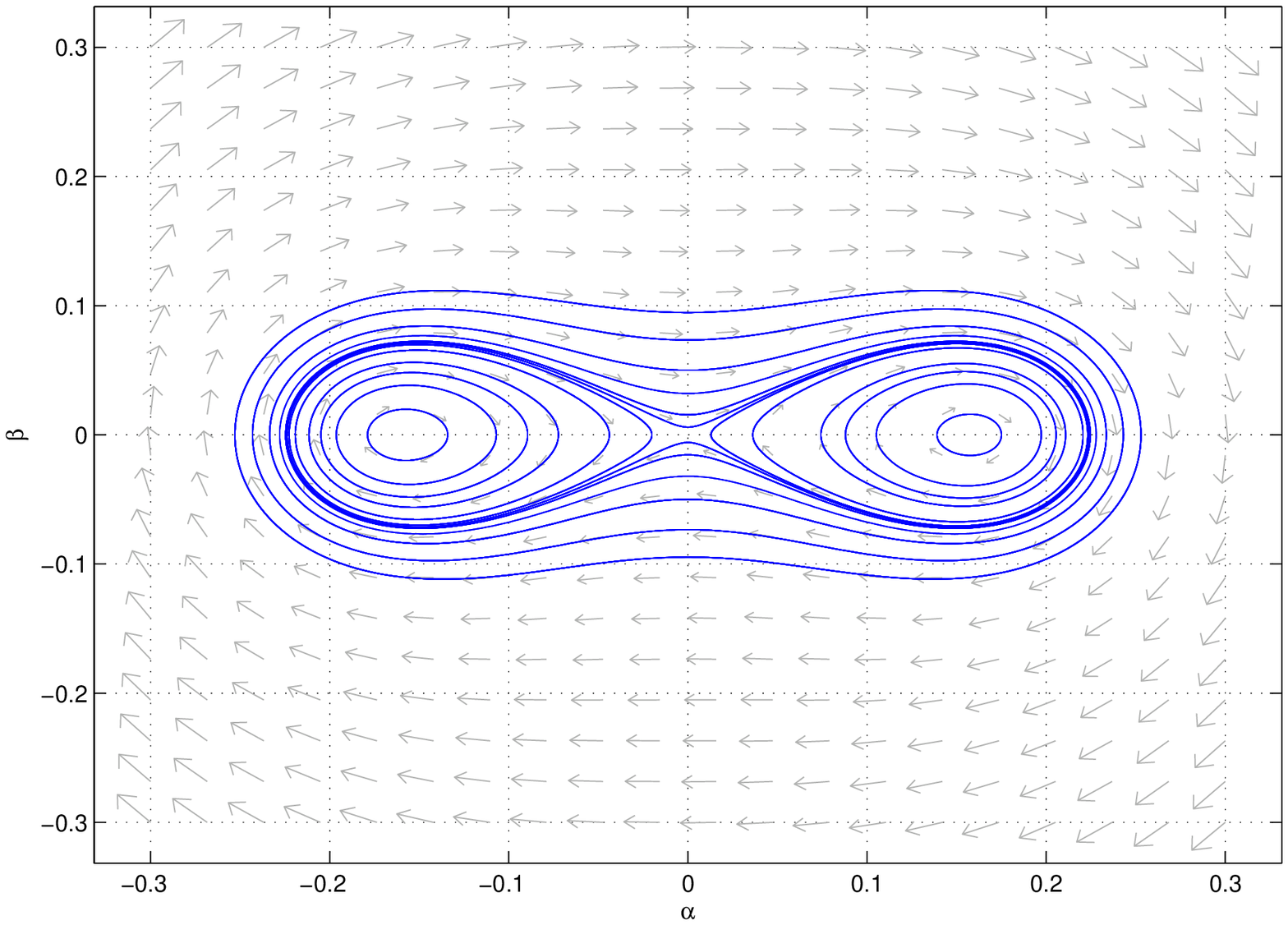}
\caption{Symmetric state for $N-N_{cr}^{FD}<0$ corresponds to an equilibrium elliptic point $(\alpha,\beta)=(0,0)$.   Plotted phase portrait corresponds to  parameter values  $N_{cr} = .1$ and  $N-N_{cr}^{FD} = -.05$ (left) and $N_{cr} = .1$ and  $N-N_{cr}^{FD} = .05$ (right).  It is clear for $N-N_{cr}^{FD} < 0$ the equilibrium sotion is stable and for $N-N_{cr}^{FD} > 0$ the equilibrium sotion is unstable.}
\label{fig7}
\end{figure} 

Furthermore, $\tilde{B}_{-}$ can be diagonalized 
\begin{align}
\tilde{B}_{-} \ &=\ \left[ \begin{array}{ccc} 
2N^{FD}_{cr} & 2N^{FD}_{cr}  & 0 \\
\lambda_+     & \lambda_-           & 0 \\
0 & 0 & 1 
\end{array} \right]\ \left[ \begin{array}{ccc} 
\lambda_+ & 0  & 0 \\
0    & \lambda_-           & 0 \\
0 & 0 & 0
\end{array} \right]\ \left[ \begin{array}{ccc} 
2N^{FD}_{cr} & 2N^{FD}_{cr}  & 0 \\
\lambda_+     & \lambda_-           & 0 \\
0 & 0 & 1 
\end{array} \right]^{-1}\nn
\end{align}
and the linear evolution is given by
\begin{equation}
e^{\tilde{B}_{-} t}\ =\ 
 \left[ \begin{array}{ccc} 
2N^{FD}_{cr} & 2N^{FD}_{cr}  & 0 \\
\lambda_+     & \lambda_-           & 0 \\
0 & 0 & 1 
\end{array} \right]\ \left[ \begin{array}{ccc} 
e^{\lambda_+t} & 0  & 0 \\
0    & e^{\lambda_-  t}         & 0 \\
0 & 0 & 1
\end{array} \right]\ \left[ \begin{array}{ccc} 
2N^{FD}_{cr} & 2N^{FD}_{cr}  & 0 \\
\lambda_+     & \lambda_-           & 0 \\
0 & 0 & 1 
\end{array} \right]^{-1} .
\end{equation}
In other words, we have the bound
\begin{eqnarray*}
e^{\tilde{B}_{-} t} \vec{\delta \alpha} = \left[ \begin{array}{c}
\cos( |\lambda_+| t) \delta \alpha + \left( \frac{N^{FD}_{cr}}{|N-N_{cr}^{FD}|} \right)^{\frac{1}{2}} \sin (|\lambda_+| t) \delta \beta \\
\cos( |\lambda_+| t) \delta \beta + \left( \frac{|N-N_{cr}^{FD}|}{N^{FD}_{cr}} \right)^{\frac{1}{2}} \sin (|\lambda_+| t) \delta \alpha\\
\delta A
\end{array} \right] .
\end{eqnarray*}

The full linearized dynamics are governed by the matrix
\begin{equation}
B_{-} \ =\ \left[ \begin{array}{cccc} 
* & * & *  & 0 \\
* & \tilde{B}_{3\times3} & *&0\\
* & * & *&0\\
0 & 0 &2{N}^{\frac12}          & 0 \\ 
\end{array} \right],
\end{equation}
where $B$ has the same eigenvalues as $\tilde{B}$, with $\lambda=0$ now a generalized eigenvalue of multiplicity two, implying linear growth of $\delta\theta(t)$.  Explicitly, we have
\begin{eqnarray*}
B_{-}^2 = 4 \left[ \begin{array}{cccc} 
N_{cr}^{FD} (N-N_{cr}^{FD}) & 0 & 0 & 0 \\
0 & N_{cr}^{FD} (N-N_{cr}^{FD}) & 0 &0\\
0 & 0 & 0 & 0\\
0 & 0 & 0  & 0 \\ 
\end{array} \right] .
\end{eqnarray*}
Hence, it is clear
\begin{eqnarray*}
e^{B_{-} t} = M_0 + 2 N^{\frac{1}{2}} t M_1 +  \cos( 2 \sqrt{ N_{cr}^{FD} |N-N_{cr}^{FD}|} t) M_2 +  \sin(2 \sqrt{ N_{cr}^{FD} |N-N_{cr}^{FD}|} t) M_3 ,
\end{eqnarray*}
where
\begin{eqnarray*}
M_0 & = &  \left[ \begin{array}{cccc} 
0 & 0 & 0 & 0 \\
0 & 0 & 0 &0\\
0 & 0 & 1 & 0\\
0 & 0 & 0  & 1 \\ 
\end{array} \right], \\
M_1 & = & \left[ \begin{array}{cccc} 
0 & 0 & 0 & 0 \\
0 & 0 & 0 &0\\
0 & 0 & 0 & 0\\
0 & 0 & 1  & 0 \\ 
\end{array} \right], \\
M_2 & = &  \left[ \begin{array}{cccc} 
1 & 0 & 0 & 0 \\
0 & 1 & 0 &0\\
0 & 0 & 0 & 0\\
0 & 0 & 0  & 0 \\ 
\end{array} \right], \\
M_3 & = & \left[ \begin{array}{cccc} 
 0 & \frac{\sqrt{N_{cr}^{FD}}}{\sqrt{|N-N_{cr}^{FD}|}} & 0 & 0 \\
\frac{\sqrt{|N-N_{cr}^{FD}|}}{\sqrt{N_{cr}^{FD}}} & 0 & 0 &0\\
0 & 0 & 0 & 0\\
0 & 0 & 0  & 0 \\ 
\end{array} \right].
\end{eqnarray*}
Note here the leading order behavior is then a system which oscillates at the correct period and grows linearly only in the phase term.  On a component basis, we may say
\begin{eqnarray*}
e^{B_{-} t} \vec{\eta} \approx \vec{\eta} +  \sin(2 \sqrt{ N_{cr}^{FD} |N-N_{cr}^{FD}|} t) \frac{\sqrt{N_{cr}^{FD}}}{\sqrt{N-N_{cr}^{FD}}} \eta_2 \vec{e}_4 + 2 N^{\frac{1}{2}} t \eta_3 \vec{e}_4.
\end{eqnarray*}

Alternatively, we can use the spectrum of $B_{-}$ to define the matrix 
\begin{eqnarray*}
P_{-} = \left[ \begin{array}{cccc}
1 & 1 & 0 & 0 \\
- i \left( \frac{N_{cr}^{FD} - N}{N_{cr}^{FD}} \right)^{\frac{1}{2}} &  i \left( \frac{N_{cr}^{FD} - N}{N_{cr}^{FD}} \right)^{\frac{1}{2}} & 0 & 0 \\
0 & 0 & 1 & 0 \\
0 & 0 & 0 & 1
\end{array} \right].
\end{eqnarray*}
Then, we have
\begin{eqnarray*}
P_{-}^{-1} B_{-} P_{-} = \left[ \begin{array}{cccc}
-2i \left( (N_{cr}^{FD} - N)N_{cr}^{FD} \right)^{\frac{1}{2}} & 0 & 0 & 0 \\
0& 2 i \left( (N_{cr}^{FD} - N)N_{cr}^{FD} \right)^{\frac{1}{2}} & 0 & 0 \\
0 & 0 & 0 & 0 \\
0 & 0 & 2 N^{\frac{1}{2}} & 0
\end{array} \right].
\end{eqnarray*}
Hence, we see
\begin{eqnarray*}
e^{B_{-} t} = P_{-}  \left[ \begin{array}{cccc}
e^{-2i \left( (N_{cr}^{FD} - N)N_{cr}^{FD} \right)^{\frac{1}{2}} t} & 0 & 0 & 0 \\
0& e^{2 i \left( (N_{cr}^{FD} - N)N_{cr}^{FD} \right)^{\frac{1}{2}} t} & 0 & 0 \\
0 & 0 & 1 & 0 \\
0 & 0 & 0 & 1
\end{array} \right] P_{-}^{-1} + 2 N^{\frac{1}{2}} t \left[ \begin{array}{cccc}
0 & 0 & 0 & 0 \\
0&0 & 0 & 0 \\
0 & 0 & 0 & 0 \\
0 & 0 & 1 & 0
\end{array} \right] .
\end{eqnarray*}

{\bf Linearized dynamics about asymmetric equilibrium states, $N>N_{cr}^{FD}$:}

For either asymmetric equilibrium, displayed in (\ref{sym-eq}) we have 
\begin{eqnarray}
\label{eqn:eqanti}
(\alpha^{eq}_{+}, \beta^{eq}_{+}, A^{eq}_{+}, \theta^{eq}_{+} (t)) = \left( \left( \frac{N - N_{cr}^{FD}}{2} \right)^{\frac{1}{2}} ,0,\left( \frac{N + N_{cr}^{FD}}{2} \right)^{\frac{1}{2}},(-2 \Omega_0 + N_{cr}^{FD} - N)t \right).
\end{eqnarray}
Hence,
\begin{equation}
B_{+} \ =\ \left[ \begin{array}{cccc} 
0 & 2(\frac{(N+N_{cr}^{FD})}{2}) & 0  & 0 \\
-2 (N-N_{cr}^{FD})& 0 & \frac{4}{\sqrt{2}} (N-N_{cr}^{FD})^{\frac{1}{2}} (\frac{(N+N_{cr}^{FD})}{2})^{\frac{1}{2}} &0\\
0 & -\frac{2}{\sqrt{2}} (N-N_{cr}^{FD})^{\frac{1}{2}} (\frac{(N+N_{cr}^{FD})}{2})^{\frac{1}{2}} & 0 &0\\
\frac{6}{\sqrt{2}} (N-N_{cr}^{FD})^{\frac{1}{2}} & 0 &2(\frac{(N+N_{cr}^{FD})}{2})^{\frac{1}{2}}   & 0 \\ 
\end{array} \right] .
\end{equation}

In this case, we substitute (\ref{sym-brk}) into the expression for $\tilde{B}$ in (\ref{red-pert-eqn}) and obtain:
\begin{equation}\label{tB-asymm}
\tilde{B}  =  \left[ \begin{array}{ccc} 
0 & N+N^{FD}_{cr}  & 0 \\
-2(N-N_{cr}^{FD}) & 0 & 2\left(N^2-(N_{cr}^{FD})^2\right)^{\frac12} \\
0 &  -\left(N^2-(N_{cr}^{FD})^2\right)^{\frac12} & 0 
\end{array} \right],\ \ \ N>N_{cr}^{FD} =\frac{\Omega_{10}}{2} .
\end{equation}
The eigenvalues of $\tilde{B}$ are:
\begin{equation}
\lambda_0=0,\ \ \lambda_\pm(N)\ =\ \pm\ 2i\ \left(N^2-(N_{cr}^{FD})^2\right)^{\frac12},\ \ \ N>N_{cr}^{FD}.
\label{tB-asymm-eig}
\end{equation}
Therefore, the bifurcating asymmetric states are stable elliptic points.
Also, $\tilde{B}_+$ is diagonalizable, resulting in
\begin{eqnarray*}
e^{\tilde{B}_{+} t} \vec{\delta \alpha} = \left[ \begin{array}{c}
\frac12(1 +\cos( |\lambda_+| t)) \delta \alpha +\left( \frac{N+N^{FD}_{cr}}{N-N_{cr}^{FD}} \right)^{\frac{1}{2}} \sin (|\lambda_+| t) \delta \beta + \frac12 \left( \frac{N+N^{FD}_{cr}}{N-N_{cr}^{FD}} \right)^{\frac{1}{2}} (1- \cos (|\lambda_+| t)) \delta A \\
\cos( |\lambda_+| t) \delta \beta + \left( \frac{|N-N_{cr}^{FD}|}{N+N^{FD}_{cr}} \right)^{\frac{1}{2}} \sin (|\lambda_+| t) \delta \alpha + \sin (|\lambda_+| t) \delta A\\
\frac12 \left( \frac{|N-N_{cr}^{FD}|}{N+N^{FD}_{cr}} \right)^{\frac{1}{2}} (1-\cos ( |\lambda_+| t) ) \delta \alpha + \frac12 \sin ( |\lambda_+| t) \delta \beta +\frac (1 +\cos( |\lambda_+| t)) \delta A 
\end{array} \right] .
\end{eqnarray*}

In a manner analogous to the case of symmetric bound states, we have
\begin{eqnarray*}
B_{+} \left[ \begin{array}{c}
0 \\
0 \\ 
0 \\
1
\end{array} \right] = 0, \ B_{+} \left[ \begin{array}{c}
\left( \frac{N+N_{cr}^{FD}}{N-N_{cr}^{FD}}  \right)^{\frac{1}{2}} \\
0 \\ 
1 \\
0
\end{array} \right] = \left[ \begin{array}{c}
0 \\
0 \\ 
0 \\
1
\end{array} \right] .
\end{eqnarray*}
Hence, it follows
\begin{eqnarray*}
e^{B_{+} t} \vec{\eta} & \approx & \vec{\eta} + \sin (2 \left( N^2 - N^2_{cr} \right)^{\frac{1}{2}} t) \left( N^2 - N^2_{cr}\right)^{\frac{1}{2}} \eta_2 \vec{e}_1 \\
& + & (1- \cos (2 \left( N^2 - N^2_{cr} \right)^{\frac{1}{2}} t)) \left( \frac{N + N_{cr}^{FD}}{N-N_{cr}^{FD}} \right)^{\frac{1}{2}} \eta_3 \vec{e}_1 + ( N+N_{cr}^{FD})^{\frac{1}{2}} (1+ t)  \eta_3 \vec{e}_4 .
\end{eqnarray*}
Once again, we see the dominant behavior be oscillations in $\alpha, \beta, A$ with a linear growth accompanied by a factor of order $N^{\frac{1}{2}} $ in $\theta$.

As before, we can use the spectrum of $B_{+}$ to define the matrix 
\begin{eqnarray*}
P_{+} = \left[ \begin{array}{cccc}
1 & 1 & 1 & 0 \\
-2 i \left( \frac{N - N_{cr}^{FD}}{N+N_{cr}^{FD}} \right)^{\frac{1}{2}} & 2 i \left( \frac{N - N_{cr}^{FD}}{N+N_{cr}^{FD}} \right)^{\frac{1}{2}} & 0 & 0 \\
-\left( \frac{N - N_{cr}^{FD}}{N+N_{cr}^{FD}} \right)^{\frac{1}{2}} & - \left( \frac{N - N_{cr}^{FD}}{N+N_{cr}^{FD}} \right)^{\frac{1}{2}} & \left( \frac{N - N_{cr}^{FD}}{N+N_{cr}^{FD}} \right)^{\frac{1}{2}} & 0 \\
0 & 0 & 0 & 1
\end{array} \right]
\end{eqnarray*}
and
\begin{eqnarray*}
P_{+}^{-1} = \left[ \begin{array}{cccc}
\frac14 & -\frac14 \left( \frac{N + N_{cr}^{FD}}{N-N_{cr}^{FD}} \right)^{\frac{1}{2}} &  -\frac14 \left( \frac{N + N_{cr}^{FD}}{N-N_{cr}^{FD}} \right)^{\frac{1}{2}} & 0 \\
\frac14 &  \frac{1}{4i} \left( \frac{N + N_{cr}^{FD}}{N-N_{cr}^{FD}} \right)^{\frac{1}{2}} &  -\frac14 \left( \frac{N + N_{cr}^{FD}}{N-N_{cr}^{FD}} \right)^{\frac{1}{2}} & 0 \\
\frac12 & 0 & \frac12 \left( \frac{N + N_{cr}^{FD}}{N-N_{cr}^{FD}} \right)^{\frac{1}{2}} & 0 \\
0 & 0 & 0 & 1
\end{array} \right].
\end{eqnarray*}
Then, we have
\begin{eqnarray*}
P_{+}^{-1} B_{+} P_{+} = \left[ \begin{array}{cccc}
-2i \left( (N^2 - (N_{cr}^{FD})^2) \right)^{\frac{1}{2}} & 0 & 0 & 0 \\
0& 2 i \left( (N^2 - (N_{cr}^{FD})^2)  \right)^{\frac{1}{2}} & 0 & 0 \\
0 & 0 & 0 & 0 \\
\frac{4}{\sqrt{2}} (N-N_{cr}^{FD})^{\frac{1}{2}} & \frac{4}{\sqrt{2}} (N-N_{cr}^{FD})^{\frac{1}{2}} & \frac{8}{\sqrt{2}} (N-N_{cr}^{FD})^{\frac{1}{2}} & 0
\end{array} \right].
\end{eqnarray*}
Hence, we see
\begin{eqnarray*}
e^{B_{+} t} & = & P_{+}  \left[ \begin{array}{cccc}
e^{2i \left( (N_{cr}^{FD} + N)(N-N_{cr}^{FD}) \right)^{\frac{1}{2}} t} & 0 & 0 & 0 \\
0& e^{-2 i \left( (N_{cr}^{FD} - N)(N-N_{cr}^{FD}) \right)^{\frac{1}{2}} t} & 0 & 0 \\
0 & 0 & 1 & 0 \\
0 & 0 & 0 & 1 
\end{array} \right] P_{+}^{-1} \\
&& + 4 \sqrt{2}  (N-N_{cr}^{FD})^{\frac{1}{2}} (e^{2i \left( N^2-N^2_{cr} \right)^{\frac{1}{2}} t} -1) P_{+} \left[ \begin{array}{cccc}
0 & 0 & 0 & 0 \\
0&0 & 0 & 0 \\
0 & 0 & 0 & 0 \\
0 & 1  & 0 & 0
\end{array} \right] P_{+}^{-1} \\
&& +  4 \sqrt{2}  (N-N_{cr}^{FD})^{\frac{1}{2}} (e^{-2i \left( N^2-N^2_{cr} \right)^{\frac{1}{2}} t} -1) P_{+} \left[ \begin{array}{cccc}
0 & 0 & 0 & 0 \\
0&0 & 0 & 0 \\
0 & 0 & 0 & 0 \\
1 & 0 & 0 & 0
\end{array} \right] P_{+}^{-1} \\
&& +  8 \sqrt{2}  (N-N_{cr}^{FD})^{\frac{1}{2}} t  \left[ \begin{array}{cccc}
0 & 0 & 0 & 0 \\
0 &0 & 0 & 0 \\
0 & 0 & 0 & 0 \\
\frac12 & 0 & \frac12 \frac{\sqrt{N_{cr}^{FD} + N}}{\sqrt{N-N_{cr}^{FD}}} & 0
\end{array} \right] .
\end{eqnarray*}

\begin{rem}
The analysis of $e^{B_{\pm} t}$ provides partial understanding of the inherent instability in the phase of the finite dimensional periodic orbits.  Specifically, an orbit $(A_1,\alpha_1,\beta_1)$ will oscillate with period $T_1$ while a different orbit $(A_2,\alpha_2,\beta_2)$ will oscillate with period $T_2$.  As a result, if two orbits begin quite close in phase, they will naturally oscillate out of phase with one another, which from the analysis above gives the linear shift in the $\delta \theta$ component.  However, the oscillations will be purely described by the $(A,\alpha,\beta)$ system, meaning we must decouple the phase from the equation to get stability.
\end{rem}

We have can the following
\begin{prop}
\label{prop:perorb}
The system \eqref{eqn:ode-sys-alpha-reduced} has periodic orbit solutions.
\end{prop}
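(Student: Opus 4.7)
The plan is to exploit the fact that the two-dimensional system \eqref{eqn:ode-sys-alpha-reduced} (obtained from conservation of $N$ via $A^2=N-\alpha^2-\beta^2$) is itself a Hamiltonian system in $(\alpha,\beta)$, with a nondegenerate elliptic equilibrium in each parameter regime. First I would identify a conserved Hamiltonian for the reduced system. Setting
$$H(\alpha,\beta)\ =\ \tfrac12(\Omega_{10}+2\alpha^2)\beta^2\ +\ (\tfrac12\Omega_{10}-N)\alpha^2\ +\ \alpha^4,$$
a direct computation yields $\dot\alpha=\partial_\beta H$ and $\dot\beta=-\partial_\alpha H$, so $H$ is conserved along trajectories. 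Up to an additive constant and an overall factor of $2$, this coincides with the restriction of the full Hamiltonian \eqref{Hdef} to $\{A^2+\alpha^2+\beta^2=N\}$.

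Next I would focus on one of the elliptic equilibria already singled out in Section~\ref{subsec:stability-finite-dim}: $(\alpha_*,\beta_*)=(0,0)$ when $N<N^{FD}_{cr}$, or $(\alpha_*,\beta_*)=(\pm\sqrt{(N-N^{FD}_{cr})/2},0)$ when $N>N^{FD}_{cr}$. A direct Hessian computation gives
$$\operatorname{Hess} H(0,0)=\begin{pmatrix}\Omega_{10}-2N & 0\\ 0 & \Omega_{10}\end{pmatrix},\qquad \operatorname{Hess} H(\alpha_*,0)=\begin{pmatrix}4(N-N^{FD}_{cr}) & 0\\ 0 & N+N^{FD}_{cr}\end{pmatrix},$$
which is strictly positive definite in precisely the two elliptic regimes above. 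Equivalently, nondegeneracy is forced by the purely imaginary, nonzero linearization eigenvalues computed in \eqref{tB-symm-eig} and \eqref{tB-asymm-eig}: for a planar Hamiltonian system the linearization is $J\cdot\operatorname{Hess} H$, whose eigenvalues are $\pm i\sqrt{\det\operatorname{Hess} H}$.

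By the Morse lemma, a nondegenerate extremum of $H$ at $(\alpha_*,\beta_*)$ admits local coordinates in which $H-H(\alpha_*,\beta_*)$ is a sum of two squares. Hence, for all sufficiently small $\epsilon>0$, the level set $\{H=H(\alpha_*,\beta_*)+\epsilon\}$ is a smooth simple closed curve encircling $(\alpha_*,\beta_*)$, and these curves foliate a punctured neighborhood. Each such curve is invariant under the flow (because $H$ is conserved) and contains no equilibrium ($(\alpha_*,\beta_*)$ is the unique critical point of $H$ nearby). Since a smooth nonvanishing vector field on a compact connected one-manifold produces periodic orbits, each level curve is traced out by a periodic solution of \eqref{eqn:ode-sys-alpha-reduced}.

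There is no substantive analytic obstacle here: once the reduced system is recognized as planar and Hamiltonian and the Hessian is checked, the existence of periodic orbits is a classical phase-plane statement (equivalent to the Lyapunov center theorem at a nondegenerate elliptic equilibrium). The real work, already carried out in Section~\ref{subsec:stability-finite-dim}, was in establishing that the equilibria are elliptic. Moreover, the same argument produces precisely the level-set families $\{H=\mathrm{const}\}$ visible in Figure~\ref{fig7}, which are the candidate orbits that the shadowing Theorem~\ref{thm:main-eq} follows for the full NLS/GP system on long time scales.
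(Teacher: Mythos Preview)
Your argument is correct and follows essentially the same strategy as the paper: identify the conserved quantity for the reduced $(\alpha,\beta)$ system, locate the critical points, classify them via the Hessian, and deduce that nearby level curves are closed and hence periodic. The paper computes $H_\alpha,H_\beta,H_{\alpha\alpha},H_{\beta\beta},H_{\alpha\beta}$, reads off the same equilibria and their local min/max character, and then invokes the Poincar\'e--Bendixson theorem to conclude periodicity on the bounded level sets.

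The only genuine difference is in the final step. Where the paper appeals to Poincar\'e--Bendixson (a general planar result that does not require Hamiltonian structure), you instead use the Hamiltonian structure directly: the Morse lemma shows the nearby level sets are simple closed curves, and since $H$ is conserved and has no critical points on these curves, each is a periodic orbit. Your route is slightly more self-contained and makes the Hamiltonian structure of the reduced system explicit (the paper states the conserved $H$ but does not write the system in the form $\dot\alpha=\partial_\beta H$, $\dot\beta=-\partial_\alpha H$). Both arguments yield exactly the same family of periodic orbits, and both feed identically into the subsequent period estimates of Section~\ref{sec:per}.
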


\begin{proof}
The proof follows from standard level set techniques.  Namely, we have the convex geometric curve $H$, which we slice at a particular value of $N$.  Since the $\alpha$, $\beta$ coordinates are constrained to live on that level set, the only possible orbits are those that orbit with a period determined by $N$.  Note, the equilibrium solution is the minimum of $H$ with respect to $N$.

First of all, we have 
\begin{eqnarray*}
H_\alpha & = & 4 \alpha [2\alpha^2 - (N-\frac{\Omega_{10}}{2}) + \beta^2], \\
H_\beta & = & 2 \beta [\Omega_{10} +2 \alpha^2], \\
H_{\alpha, \alpha} & = & 4 [(6 \alpha^2 - (N-\frac{\Omega_{10}}{2}) +
\beta^2] , \\
H_{\beta,\beta} & = & 2[ \Omega_{10} + 2 \alpha^2] , \\
H_{\alpha, \beta} & = & 8 \alpha \beta.
\end{eqnarray*}
Hence, for $N < N_{cr}^{FD}$, we have an equilibrium point only at $(\alpha, \beta) = (0,0)$.  For $N > N_{cr}^{FD}$, we have equilibrium points at $(\alpha, \beta) = (0,0), (\pm \sqrt{ \frac{N - \frac{\Omega_{10}}{2}}{2}}, 0)$.  Using the Hessian, we see that $(0,0)$ is a local minimum for $H$ when $N<N_{cr}^{FD}$, and $(0,0)$ is a local maxima while $(\pm \sqrt{ \frac{N - \frac{\Omega_{10}}{2}}{2}}, 0)$ are local minima when $N<N_{cr}^{FD}$.  Hence, for a fixed $N$, we select a bounded, closed curve for $(\alpha, \beta)$.  By the Poincare-Bendixson Theorem, the possible curves are asymptotic to a limit cycle.  However, the orbit is a closed curve, so it is necessarily periodic.
\end{proof}

\begin{rem}
We note that the dichotomy in the types of equilibria found for $N>N_{cr}^{FD}$ and $N<N_{cr}^{FD}$ exactly describe the phenomenon of symmetry breaking for the profile of the stable bound state for \eqref{eqn:nlsdwp} at small amplitudes as seen in \cite{KKSW}.
\end{rem}

\subsection{Periodic Orbits Near the Equilibrium Point}
\label{sec:per}

In this section we estimate the periodic orbits and describe them near the equilibrium point.

Linearizing about the equilibrium solutions of \eqref{eqn:ode-sys-alpha} for $N-N_{cr}^{FD}>0$, one may define 
\begin{eqnarray}
\alpha & = & \sqrt{\frac{|N-N_{cr}^{FD}|}{2}} + h_1, \\
\beta & = & h_2,
\end{eqnarray} 
where $h_1$, $h_2$ small perturbations.
Plugging this ansatz into \eqref{eqn:ode-sys-alpha} gives
\begin{eqnarray}
\ddot{h}_1 = - 4 (N-N_{cr}^{FD}) \left( N_{cr}^{FD} + \frac{(N-N_{cr}^{FD})}{2} \right) h_1 + \mathcal{O} (|\vec{h}|^2) .
\end{eqnarray}
Hence, the period of oscillation near the equilibrium point is of the form
\begin{eqnarray}
T = \frac{\sqrt{2} \pi}{|N^2-N^2_{cr}|^{\frac{1}{2}}} + \mathcal{O} (|N-N_{cr}^{FD}|^2) = T_{+} (N-N_{cr}^{FD}) + \mathcal{O} (|N-N_{cr}^{FD}|^2).
\end{eqnarray}

Similarly, for $N-N_{cr}^{FD}<0$ we have 
\begin{eqnarray}
\alpha =  h_1, \\
\beta = h_2,
\end{eqnarray} 
which when plugged into \eqref{eqn:sys-fd-pol} gives
\begin{eqnarray}
\ddot{h}_1 = 4 |N-N_{cr}^{FD}| N_{cr}^{FD} h_1 + \mathcal{O} (|\vec{h}|^2).
\end{eqnarray}
Hence, the period of oscillation near the equilibrium point is of the form
\begin{eqnarray}
T = \frac{ \pi}{ (|N-N_{cr}^{FD}| N_{cr}^{FD})^{\frac{1}{2}}} + \mathcal{O} (|N-N_{cr}^{FD}|^2) = T_{-} (N-N_{cr}^{FD}) + \mathcal{O} (|N-N_{cr}^{FD}|^2).
\end{eqnarray}
It is on a time scale of multiple oscillations we hope to control the difference between the observed finite dimensional periodic solutions and the full solution to the PDE.

This leads us to the following:

\begin{prop}\label{prop:perorb-period}
Fix $(N-N_{cr}^{FD}) > 0$ or $(N-N_{cr}^{FD}) < 0$ such that $|N-N_{cr}^{FD}| \ll 1$.  Let us assume $N_{cr}^{FD} \gg |N-N_{cr}^{FD}|$.  Take $(\rho_0^{eq}, \rho_1^{eq})$ be the corresponding equilibrium solution for \eqref{eqn:sys-fd}.  For any periodic solution $\rho_0 (t), \rho_1 (t)$ of \eqref{eqn:sys-fd} such that 
\begin{eqnarray}
| (\rho_0 (t), \rho_1 (t) ) - (\rho_0^{eq}, \rho_1^{eq}) | \ll |N-N_{cr}^{FD}|,
\end{eqnarray}
we have 
\begin{eqnarray}
(\rho_0 (t+T), \rho_1 (t+T) ) = (\rho_0 (t), \rho_1 (t) ) ,
\end{eqnarray}
where 
\begin{eqnarray}
|T - T_{\pm} (N-N_{cr}^{FD}) | \ll |N-N_{cr}^{FD}|^2.
\end{eqnarray}
\end{prop}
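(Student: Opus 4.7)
The plan is to reduce to one degree of freedom using the first integral $N$, compute the period as a contour integral around the equilibrium, and exploit a discrete symmetry of the Hamiltonian to sharpen the order of the error. Using $N = A^2 + \alpha^2 + \beta^2$ as a conserved quantity eliminates $A$ and reduces \eqref{eqn:alpha-reduced1} to the planar Hamiltonian system \eqref{eqn:ode-sys-alpha-reduced} on $(\alpha, \beta)$ with conserved Hamiltonian $H(\alpha, \beta; N)$. At either of the stable equilibria \eqref{eqn:eqsym} or \eqref{eqn:eqanti} (both with $\beta_* = 0$) the Hessian of $H$ is positive definite, as already checked in Proposition \ref{prop:perorb}, so nearby level sets $\{H = H_{eq} + E\}$ are smooth closed curves which are exactly the small periodic orbits of the proposition.

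Next, I shift to $(h_1, h_2) = (\alpha - \alpha_*, \beta)$ and Taylor expand. A direct substitution of $A^2 = N - \alpha^2 - \beta^2$ into the formula for $H$ displayed after \eqref{eqn:ode-sys-alpha-reduced} shows that $H$ is an even polynomial in $\beta$, so that
\begin{eqnarray*}
H - H_{eq} = \tfrac{1}{2} K_1 h_1^2 + \tfrac{1}{2} K_2 h_2^2 + P_3(h_1) + h_2^2 Q_1(h_1),
\end{eqnarray*}
with $K_1 K_2 = \omega_\pm^2$ matching the linearized squared-frequencies of \secref{sec:per}, $P_3$ a polynomial starting at degree $3$ in $h_1$, and $Q_1$ starting at degree $1$ in $h_1$. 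After a symplectic rescaling bringing the quadratic part into standard harmonic oscillator form, the period on the level $\{H - H_{eq} = E\}$ is represented by the contour integral
\begin{eqnarray*}
T(E) = 2 \oint \frac{dh_1}{|\partial_{h_2} H|},
\end{eqnarray*}
taken once around the equilibrium, with $h_2$ determined implicitly from the level-set equation.

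The crucial point is that the $\ZZ_2$ symmetry $h_2 \mapsto -h_2$ of $H$ forces the expansion of $T(E)$ in powers of $E^{1/2}$ to contain only even powers, so that $T(E) = T_\pm + c_\pm E + O(E^2)$, with $c_\pm$ computable from $P_3$, $Q_1$, and the quartic part of $H$. The hypothesis $|\vec{h}| \ll |N - N_{cr}^{FD}|$ controls the energy via $E \lesssim \omega_\pm^2 |\vec{h}|^2$; substituting and carefully tracking the dependence of $\omega_\pm$ and $c_\pm$ on $N - N_{cr}^{FD}$ then yields the stated bound $|T - T_\pm| \ll |N - N_{cr}^{FD}|^2$.

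The main obstacle will be the bookkeeping of the $N - N_{cr}^{FD}$ dependence of the nonlinear period correction $c_\pm$, since the linearized frequency $\omega_\pm \sim |N - N_{cr}^{FD}|^{1/2}$ degenerates as $N \to N_{cr}^{FD}$ and inverse powers of $\omega_\pm$ appear in $c_\pm$. The $\ZZ_2$ symmetry eliminating odd-order contributions to $T(E)$ is essential; without it, the analogous bound would only be of order $|N-N_{cr}^{FD}|^{3/2}$ rather than $|N - N_{cr}^{FD}|^2$. One must also verify that the regime $|\vec{h}| \ll |N-N_{cr}^{FD}|$ is precisely the regime in which the quadratic part of $H$ dominates the cubic corrections uniformly on the orbit, so that the contour-integral expansion converges with the claimed remainder.
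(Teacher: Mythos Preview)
Your approach is substantially more detailed than the paper's. The paper's ``proof'' is just the linearization computation preceding the proposition: write the equation for $\ddot{h}_1$, read off $\omega_\pm$, and assert the remainder $T - T_\pm = O(|N - N_{cr}^{FD}|^2)$ without further justification --- no period integral, no control on the nonlinear correction as a function of orbit size. Your contour-integral representation of $T(E)$ and expansion in $E$ is the natural way to put this on firm ground, and the reduction to the planar $(\alpha,\beta)$ system with even-in-$\beta$ Hamiltonian is exactly right.

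One correction: the $\ZZ_2$ symmetry $h_2 \mapsto -h_2$ is \emph{not} essential for ruling out $E^{1/2}$ terms. For any analytic planar Hamiltonian with a nondegenerate elliptic equilibrium, $T(E)$ is analytic in $E$; odd powers of $E^{1/2}$ never appear even when cubic terms break the symmetry (Birkhoff normal form, or equivalently analyticity of the action $E \mapsto I(E)$). The symmetry does simplify the integral, but the analyticity of $T$ in $E$ does not depend on it. The obstacle you correctly flag --- tracking how $c_\pm$ blows up as $\omega_\pm \to 0$ --- is the real content. Note in this connection that the Hessian eigenvalues are $K_1 \sim |N - N_{cr}^{FD}|$ and $K_2 \sim N_{cr}^{FD}$ separately (not both $\sim \omega_\pm$), so your estimate $E \lesssim \omega_\pm^2 |\vec{h}|^2$ should be $E \lesssim N_{cr}^{FD}\, |\vec{h}|^2$, and after normalizing the quadratic part the rescaled cubic coefficients acquire powers of $K_1^{-1/2}$. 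The paper does not carry out this bookkeeping either.
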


\begin{rem}
For small perturbations of the equilibrium point (for either $(N-N_{cr}^{FD})<0$ or $(N-N_{cr}^{FD})>0$, it is for precisely the period $T_\pm (N-N_{cr}^{FD})$ on which we must control the coupling to the continuous spectrum for the full solution to \eqref{eqn:nlsdwp} in order to prove these finite dimensional structures are observable over many oscillations, see Section \ref{sec:persist}.  In order to generalize our result to any periodic solution predicted by the finite dimensional dynamics, we must understand fully the period of each full oscillation.  This will be discussed further in Section \ref{sec:conclusion}.
\end{rem}
Finally, we can state the following

\begin{prop}
Fix $\epsilon > 0$.  There exists a $ \delta > 0$ such that if a given periodic orbit solution of \eqref{syst-rot}, $(A(t),\alpha(t),\beta(t), \theta(t))$, with period $T$ with
\begin{eqnarray}
| (A(0),\alpha (0), \beta (0), 0 ) - (A_{eq}, \alpha_{eq}, \beta_{eq}, \theta_{eq}) | < \delta,
\end{eqnarray}
then we have
\begin{eqnarray}
\| M(t) \|_{L^ \infty \to L^\infty} < (1 + \epsilon) \| e^{B_{eq} t} \|_{L^ \infty \to L^\infty},  \\
\| \tilde{M} (t) \|_{L^ \infty \to L^\infty} < (1 + \epsilon) \| e^{\tilde{B}_{eq} t} \|_{L^ \infty \to L^\infty}.
\end{eqnarray}
\end{prop}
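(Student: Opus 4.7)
My plan is to compare $M(t)$ to $e^{B_{eq}t}$ via variation of parameters, exploiting the fact that along a periodic orbit close to the equilibrium the time-dependent matrix $B(t)$ is uniformly close to the constant matrix $B_{eq}$.

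First I would note that the entries of $B(t)$ and $\tilde B(t)$ in \eqref{eqn:sys-lin-alpha} and \eqref{red-pert-eqn} are polynomials in $A(t),\alpha(t),\beta(t)$. By continuous dependence of solutions of \eqref{syst-rot} on initial data, for any $\eta>0$ there is a $\delta_0>0$ such that every trajectory launched within $\delta_0$ of the equilibrium satisfies $\sup_{t\in[0,T]}|(A(t)-A_{eq},\alpha(t)-\alpha_{eq},\beta(t)-\beta_{eq})|\le C\eta$ on a fixed time interval $[0,T]$. Because the hypothesized orbit is periodic with period $T$ (and by Proposition \ref{prop:perorb-period} this $T$ is close to the equilibrium period $T_{\pm}$), the estimate extends to all $t\ge 0$. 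Therefore $\sup_{t\ge 0}\|B(t)-B_{eq}\|$ and $\sup_{t\ge 0}\|\tilde B(t)-\tilde B_{eq}\|$ are both $O(\eta)$.

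Second I would apply Duhamel's formula,
$$
M(t) \;=\; e^{B_{eq}t} \;+\; \int_0^t e^{B_{eq}(t-s)}\bigl(B(s)-B_{eq}\bigr)M(s)\,ds,
$$
together with a Gronwall-type bootstrap. On the interval $[0,T]$, $\|e^{B_{eq}t}\|$ is bounded a priori by a constant (times at worst the linear factor coming from the Jordan block analyzed in Section \ref{subsec:stability-finite-dim}), and the driving perturbation is $O(\eta)$. The standard iteration then yields $\|M(t)\|\le(1+C_1\eta)\|e^{B_{eq}t}\|$ on $[0,T]$, so picking $\eta\le\epsilon/C_1$ gives the desired bound. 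The identical argument applied to the reduced $3\times 3$ system yields the bound on $\tilde M$.

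To extend the bound beyond one period, I would invoke Floquet's theorem: since $B(\cdot)$ is $T$-periodic, $M(nT+s)=M(s)M(T)^n$, and by the previous step the Floquet multipliers of $M(T)$ are $O(\eta)$-close to the eigenvalues of $e^{B_{eq}T_\pm}$ exhibited in \eqref{tB-symm-eig} and \eqref{tB-asymm-eig}. The main obstacle is the generalized zero eigenvalue of $B_{eq}$ inherited from the decoupled $\theta$-equation: it produces secular linear growth in $e^{B_{eq}t}$, and one must check that $M(T)$ inherits the corresponding nilpotent structure so that $M(t)$ grows no faster than $e^{B_{eq}t}$. This follows from two Hamiltonian symmetries: conservation of $N=A^2+\alpha^2+\beta^2$ along the perturbed orbit supplies a genuine zero eigenvector, and the cyclic phase $\theta$ enters only through $\dot\theta$, reproducing exactly the nilpotent coupling of $B_{eq}$. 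With these structural facts in hand the single-period estimate propagates for all $t\ge 0$, with the factor $(1+\epsilon)$ absorbing the $O(\eta)$ discrepancy in both the oscillatory and the Jordan sectors.
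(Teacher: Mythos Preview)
Your Duhamel--Gronwall argument on a single period is correct and gives a clean route to the bound on $[0,T]$, but it is a different approach from the paper's, which works directly with the Floquet multipliers from the outset. The real issue lies in your extension to all $t\ge 0$.

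The gap is this: you argue that the Floquet multipliers of $M(T)$ are $O(\eta)$-close to the eigenvalues of $e^{B_{eq}T_\pm}$, and then claim the single-period estimate propagates. But at the equilibrium period $T_\pm=2\pi/|\lambda_\pm|$, \emph{all four} eigenvalues of $e^{B_{eq}T_\pm}$ collapse to $1$ (the oscillatory pair $e^{\pm i|\lambda_\pm|T_\pm}=1$ as well as the double zero). So ``$O(\eta)$-close to $1$'' does not preclude a multiplier with $|\lambda|=1+c\eta$, which would produce genuine exponential growth $|\lambda|^n$ over $n$ periods and destroy the comparison with the merely linear growth of $\|e^{B_{eq}t}\|$. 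Your appeal to ``Hamiltonian symmetries'' gestures at the right idea but does not supply the needed rigidity: conservation of $N$ and the cyclic nature of $\theta$ pin down the Jordan sector at $\lambda=1$, but say nothing about where $\lambda_3,\lambda_4$ sit relative to the unit circle.

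The paper closes this gap structurally rather than perturbatively. It identifies $\lambda_1=\lambda_2=1$ explicitly via the tangent vector $(\dot A,\dot\alpha,\dot\beta,\dot\theta)$ and differentiation with respect to the period. Then it uses Liouville's formula together with the parity $\alpha(t)=\alpha(-t)$, $\beta(t)=-\beta(-t)$ to compute $\int_0^T\operatorname{tr}B(s)\,ds=0$, forcing $\prod_j\lambda_j=1$ exactly. Hence $\lambda_3\lambda_4=1$, so the remaining pair is either complex conjugate on the unit circle or real reciprocal; continuity from the equilibrium (where they are on the unit circle and not at $\pm 1$) rules out the latter for orbits sufficiently close. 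This is what guarantees $M(T)^n$ grows no faster than $e^{B_{eq}nT}$. To repair your argument you would need to import exactly this trace computation and the resulting constraint $\lambda_3\lambda_4=1$; the Gronwall step alone cannot see it.
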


\begin{proof}
The proof follows from continuity of the Floquet multipliers.  We discuss the case for $B_{eq}$ and $M(t)$ here.  The analysis for the case for $\tilde{B}_{eq}$ and $\tilde{M}$ will follow similarly.

It is clear that $(\dot A(t),\dot \alpha(t),\dot \beta(t),\dot \theta (t))$ is a solution to \eqref{eqn:sys-lin-alpha}, giving at least one Floquet multiplier $\lambda_1 = 1$. Similarly, diffentiation with respect to the period gives a similar result, meaning we in fact have $\lambda_1 = \lambda_2 = 1$.

From the analysis of the phase diagram for periodic orbits in $\alpha,\beta$,
we have that $\alpha (t) = \alpha (-t)$ and $\beta (t) = -\beta (-t)$.  Hence
\begin{eqnarray}
\int_0^T \alpha (s) \beta(s) ds = 0.
\end{eqnarray}
So, given $\text{tr} (B(t))$, we know from standard Floquet theory
\begin{eqnarray}
\prod_{j=1}^4 \lambda_j = 1.
\end{eqnarray}

Hence, either $\lambda_3 = \bar{\lambda}_4$ with $|\lambda_3|=|\lambda_4| =1$ or $\lambda_3,\lambda_4 \in \RR$ with $\lambda_3 = \lambda_4^{-1}$.  By continuity of the Floquet multipliers, near $\vec{\alpha}_{eq}$, we have the only the degeneracy at $\lambda = 1$ resulting in a similar growth behavior to that of $e^{B_{eq} t}$ as seen in Section \ref{subsec:stability-finite-dim}.
\end{proof}

\section{An Ansatz for the Coupled System}
\label{sec:ansatz}

Starting with the  finite dimensional system given in \eqref{eqn:alpha-reduced1}, we consider a periodic orbit, $(\tilde{\alpha}(t), \tilde{\beta}(t))$, near equilibrium point  and construct a periodic orbit of the extended system \eqref{eqn:ode-sys-alpha}:
\begin{eqnarray*}
{\sigma}_{*}(t) = \left(\tilde{\alpha}(t) , \tilde{\beta}(t), \tilde{A}(t)\right).
\end{eqnarray*}
Below we shall specify how near the equilibrium $\sigma_*$ need be.  

 We now write the system \eqref{eqn:simpcoupledsystem} by centering around the orbit ${\sigma}_*$ of the finite dimensional truncation:
 \begin{align}
 \sigma(t) &= \sigma_*(t) + \eta,\nn\\
  &\equiv \left(\tilde{A}(t)+ \eta_A(t), \tilde{\alpha}(t)+ \eta_\alpha(t), 
  \tilde{\beta}(t)+\eta_\beta(t) \right).
 \label{eta-def}
\end{align}
This corresponds to a solution of the form:
\begin{eqnarray*}
u(x,t) = e^{i \theta(t)} \left( (\tilde{A}(t) + \eta_A(t) )\ \psi_0 + [(\tilde{\alpha} (t) + \eta_\alpha(t)) + i (\tilde{\beta} (t) + \eta_\beta(t))] \psi_1 + R(x,t)\right) 
\end{eqnarray*}
with initial conditions
\begin{eqnarray*}
u_0(x) = e^{i \theta(0) } \left(\ \tilde{A}(0)\ \psi_0(x) + [\tilde{\alpha}(0)+ i \tilde{\beta}(0)]\ \psi_1(x)\ \right) .
\end{eqnarray*}

Centered about ${\sigma}_*$, the system \eqref{eqn:simpcoupledsystem} becomes:
\begin{eqnarray*}
\dot{\vec{\eta}} & = &D_{{\sigma}} F_{FD} \left({\sigma}_*(t)\right) \vec{\eta} + [\vec{F}_{FD} (\vec{\sigma}_{*} + \vec{\eta}) - \vec{F}_{FD} (\vec{\sigma}_*)- D_{\vec{\sigma}} \vec{F}_{FD} (\vec{\sigma}_*) \vec{\eta} ] + \vec{G}_{FD} (\vec{\sigma}_{*},\vec{\eta};R,\bar{R}), \\
\dot{\theta} & = & -\Omega_0 + A^2 + 3 \alpha^2 + \beta^2 + G_\theta (R, \bar{R}; A, \alpha, \beta), \\
i R_t & = & (H - \Omega_0) R + (\tilde{A}^2 + 3 \tilde{\alpha}^2 + \tilde{\beta}^2) R + F_b \left({\sigma}_{*}+{\eta}\right) + F_R \left( {\sigma}_{*} + {\eta}; R, \bar{R}\right)\nn\\
 &&\ \ \ \ \ \ \  + \left(\ A^2-\tilde{A}^2\ +\ 3(\alpha^2-\tilde{\alpha}^2)\ +\ \beta^2-\tilde{\beta}^2\ \right) R\ .
\end{eqnarray*}

We have the estimates:
\begin{eqnarray*}
\left|\ \vec{F}_{FD} (\vec{\sigma}_{*} + \vec{\eta}) - \vec{F}_{FD} (\vec{\sigma}_{*}) -  D_{\vec{\sigma}} F_{FD} (\vec{\sigma}_*)\vec{\eta}\ \right| = 
\mathcal{O} \left(\ \tilde{A} |\vec{\eta}|^2 + |\vec{\eta}|^3\ \right) .
\end{eqnarray*}
See Appendix \ref{sec:errorestfd} for the explicit expressions 
in the system.\\ 

We denote by  $\tilde{R}$ the leading order part of $R$, driven
by the periodic solution $\sigma_*(t)$:
 \begin{eqnarray}
\label{eqn:tildeR}
i \tilde{R}_t = (H - \Omega_0) \tilde{R} + (\tilde{A}^2 + 3 \tilde{\alpha}^2 + \tilde{\beta}^2) \tilde{R} + P_c F_b(\vec{\sigma}_{*}) .
\end{eqnarray}
The correction to $\tilde{R}$ is given by $w$, which satisfies:
\begin{equation}
R = \tilde{R} + w.
\label{tR-def}
\end{equation}
Introduce, $\tilde{M}(t)$, a fundamental solution matrix for the system of ODEs with time-periodic coefficients:
\begin{equation}
\partial_t\eta = D_\sigma F(\sigma_*(t))\ \eta.
\nn\end{equation}. 

We shall study  the following system of integral equations
 for $\eta(t),\ \theta(t), w(x,t)$:
\begin{align}
\vec{\eta} (t) &= \int_0^t \tilde{M}(t) \tilde{M}^{-1} (s) \left[ (\vec{F}_{FD} (\vec{\sigma}_{*}+{\eta}) -  \vec{F}_{FD} (\vec{\sigma}_{*}) - D_\sigma \vec{F}_{FD} (\vec{\sigma}_{*}) \vec{\eta})+ \vec{G}_{FD} (\vec{\sigma}_{*},\vec{\eta};R,\bar{R}) \right] ds ,\nn \\
\label{eqn:intsys}
\theta &=  \theta_0 + \int_0^t \left[-\Omega_0 + A^2 + 3 \alpha^2 + \beta^2 + G_\theta (R, \bar{R}; \vec{\sigma}_{*},\vec{\eta}) \right] ds, \\
w(t) &= \int_0^t e^{iH(t-s)-i \Omega_0 (t-s) +i \int_s^t (\tilde{A}^2 + 3 \tilde{\alpha}^2 + \tilde{\beta}^2) (s') ds'} P_c \left[ (F_b (\vec{\sigma}_{*} + \vec{\eta}) - F_b (\vec{\sigma}_{*})) + 
F_R(\vec{\sigma}_{*},\vec{\eta};R,\bar{R}) \right] . \nn
\end{align}

 We view a solution, $(\vec{\eta},R)$, of this system of integral equations  as fixed point of a mapping, $\mathcal{M}$:
 \begin{equation}
 (\vec{\eta},R) = \mathcal{M} (\vec{\eta},R).
 \label{fixedpoint}
 \end{equation}
 
 In the next section we formulate and solve this fixed point problem in a function space, which yields the existence of solutions to NLS/GP which ``shadow'' the periodic orbit, $\sigma_*(t)$, on the time scale of many periods.

\section{Main Results}
\label{sec:persist}

Recall that the period of the orbits described in  Proposition \ref{prop:perorb} 
satisfies
\begin{equation}
T_{period} \sim (|N_{cr}^{FD} - N| N_{cr}^{FD})^{-\frac12}.
\nn\end{equation}
 In this section we shall construct solutions to the full PDEs, which ``shadow'' the finite dimensional orbits for many periods.
 
Let $\dd>0$ denote a number to be chosen sufficiently small. And define the region of parameter space, about the symmetry breaking point,  in which we conduct our study by
\begin{align}
\left| N_{cr}^{FD}-N \right|\ &=\ \dd, \nn\\
N_{cr}^{FD}\ &=\ \dd^\gamma.\label{periodN} 
\end{align}
For now $0<\gamma<1$, but  we shall place
further constraints will be placed on $\gamma$. 

\begin{rem}\label{remark:dd-small}
Recall that $\dd=N_{cr}^{FD}\sim \mathcal{N}_{cr}(L)$, where $L$ is the well-spacing parameter for the double-well. Since, for $L$ sufficiently large,  $ \mathcal{N}_{cr}(L)\ \sim\ \Omega_1(L)-\Omega_0(L)\ \lesssim e^{-\kappa L}$, the eigenvalues splitting ,
 and $\dd$ can be made small by choosing $L$ sufficiently large.
 \end{rem}

In terms of $\dd$, we now describe the periodic solutions discussed in Propositions \ref{prop:perorb} and \ref{prop:perorb-period}:
\begin{prop}\label{prop:delta-perorb} 
There exists $\dd_0>0$, such that for $0<\dd<\dd_0$,  the system \eqref{eqn:alpha-reduced1} has periodic solutions, which are small perturbations of the equilibria:
\begin{align}
|\tilde{A}(t)|^2 &\sim \dd^\gamma,\ \  |\tilde{\alpha}(t)|^2 +
|\tilde{\beta}(t)|^2\ \sim \dd,\ \ \ N>N_{cr}^{FD}, \\
|\tilde{A}(t)|^2 &\sim \dd^\gamma,\ \  |\tilde{\alpha}(t)|^2 +
|\tilde{\beta}(t)|^2\ \sim \dd^{1+\delta},\ \ \ N<N_{cr}^{FD} ,
\label{orbit-size-tau}
\end{align}
where $\delta>0$ is to be chosen below.

The period of oscillations of  the periodic solutions of Proposition \ref{prop:perorb} is
\begin{equation}
T_{period}(\dd) \sim \dd^{-\frac{1+\gamma}{2}} .
\label{Tdelta}\end{equation}
\end{prop}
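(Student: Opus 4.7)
The plan is to view Proposition~\ref{prop:delta-perorb} as a translation of Propositions~\ref{prop:perorb} and \ref{prop:perorb-period} into the scaling regime defined by $|N-N^{FD}_{cr}|=\dd$ and $N^{FD}_{cr}=\dd^\gamma$, $0<\gamma<1$. Existence of periodic orbits follows from Proposition~\ref{prop:perorb}, so the work is to (i) select orbits of the claimed amplitude by fixing a level set of $H$ at the appropriate distance from the equilibrium, and then (ii) read off the period from Proposition~\ref{prop:perorb-period} in the new variables.

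For the amplitudes, I would split into the two cases of the bifurcation diagram. When $N>N^{FD}_{cr}$, choose the orbit to encircle one of the asymmetric equilibria of \eqref{sym-brk}, so that to leading order $\tilde A(t)^2 \approx (N+N^{FD}_{cr})/2$ and $\tilde\alpha(t)^2+\tilde\beta(t)^2\approx (N-N^{FD}_{cr})/2$. Using $N-N^{FD}_{cr}=\dd$ and $N+N^{FD}_{cr} = 2\dd^\gamma + \dd$, and noting that $\dd\ll \dd^\gamma$ since $\gamma<1$, we obtain $|\tilde A|^2\sim\dd^\gamma$ and $|\tilde\alpha|^2+|\tilde\beta|^2\sim \dd$. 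When $N<N^{FD}_{cr}$, work near the stable symmetric equilibrium \eqref{sym-eq}, where $A_*^2=N=\dd^\gamma-\dd \sim \dd^\gamma$, and select a level of $H$ so that the orbit has $|\tilde\alpha|^2+|\tilde\beta|^2\sim \dd^{1+\delta}$; this amplitude is a free parameter which we later tune (through $\delta>0$) to make the nonlinear remainders subdominant in the shadowing argument of Section~\ref{sec:persist}. In both cases one uses conservation of $N=\tilde A^2+\tilde\alpha^2+\tilde\beta^2$ to bound $|\tilde A|^2$ on the full orbit in terms of the equilibrium value plus an $O(\dd)$ (respectively $O(\dd^{1+\delta})$) correction, which preserves the stated scalings.

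For the period, I would apply Proposition~\ref{prop:perorb-period} directly:
\begin{align*}
T_+(N-N^{FD}_{cr}) &= \frac{\sqrt{2}\,\pi}{|N^2-(N^{FD}_{cr})^2|^{1/2}} = \frac{\sqrt{2}\,\pi}{\bigl|(N-N^{FD}_{cr})(N+N^{FD}_{cr})\bigr|^{1/2}} \sim \frac{1}{\bigl(\dd\cdot\dd^\gamma\bigr)^{1/2}} = \dd^{-(1+\gamma)/2},\\
T_-(N-N^{FD}_{cr}) &= \frac{\pi}{\bigl(|N-N^{FD}_{cr}|\,N^{FD}_{cr}\bigr)^{1/2}} \sim \frac{1}{\bigl(\dd\cdot\dd^\gamma\bigr)^{1/2}} = \dd^{-(1+\gamma)/2},
\end{align*}
so both branches give $T_{period}(\dd)\sim\dd^{-(1+\gamma)/2}$. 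The $\mathcal{O}(|N-N^{FD}_{cr}|^2)=\mathcal{O}(\dd^2)$ correction from Proposition~\ref{prop:perorb-period} is absolutely negligible compared to the leading period, since $\dd^2/\dd^{-(1+\gamma)/2}=\dd^{2+(1+\gamma)/2}\to 0$ as $\dd\to 0^+$.

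The only non-routine point is to verify that Proposition~\ref{prop:perorb-period} actually applies in the chosen regime, i.e.\ that the perturbation of the equilibrium lies within its smallness hypothesis $|\cdot|\ll |N-N^{FD}_{cr}|$. For the case $N>N^{FD}_{cr}$ this requires the orbit amplitude in $(\alpha,\beta)$ to be much smaller than $\dd$, which I would ensure by restricting to a sufficiently small level set of $H$ around the asymmetric equilibrium; in the case $N<N^{FD}_{cr}$ the amplitude $\dd^{1+\delta}$ is already much smaller than $\dd$ by construction for any $\delta>0$. Finally, fixing $\dd_0$ so small that both $\dd<N^{FD}_{cr}$ (i.e.\ $\gamma<1$, which implies $\dd\ll \dd^\gamma$) and the error term in Proposition~\ref{prop:perorb-period} remains controllable, yields the conclusion. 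No new dynamical input beyond Propositions~\ref{prop:perorb} and \ref{prop:perorb-period} is required; the content of the proposition is the recasting of those results in the scaling variable $\dd$ that will parametrize the shadowing theorem of Section~\ref{sec:persist}.
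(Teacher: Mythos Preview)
Your proposal is correct and matches the paper's approach: the paper in fact provides no separate proof of this proposition, treating it as an immediate restatement of Propositions~\ref{prop:perorb} and~\ref{prop:perorb-period} in the scaling variables $|N-N^{FD}_{cr}|=\dd$, $N^{FD}_{cr}=\dd^\gamma$. Your explicit verification of the amplitude scalings via the equilibrium values \eqref{sym-eq}, \eqref{sym-brk} and of the period via the formulas for $T_\pm$ is exactly the computation implicitly being invoked.
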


We shall seek to contruct solutions on a time interval of the form
\begin{equation}
0\le t\le T_*(\dd),\ \ \ \ T_*(\dd)\sim \dd^{-\epsilon}\times T_{period}(\dd)\ =\ \dd^{-\frac{1+\gamma}{2} -\epsilon} .
\label{Texist}
\end{equation}
\subsection{Notation}
It should be noted, notationally when we refer to
\begin{eqnarray}
A \lesssim B,
\end{eqnarray}
we mean 
\begin{eqnarray}
A \leq C B
\end{eqnarray}
for some $C$ of order $1$.  Also, for
\begin{eqnarray}
A \ll  B,
\end{eqnarray}
we mean
\begin{eqnarray}
A \leq c B
\end{eqnarray} 
for some $c>0$ much less than $1$.


\subsection{Statement of Theorem}
\label{sec:thm}

\begin{thm}
\label{thm:main-eq}
Assume $0<\dd<\dd_0$ and $\frac{7}{9} < \gamma < 1$.\\
  Denote by 
\begin{eqnarray*}
{\sigma}_{*}(t)\ =\ \left(\ \tilde{A}(t), \tilde{\alpha}(t) , \tilde{\beta}(t)\ \right), 
\end{eqnarray*}
a periodic solution of  \eqref{eqn:ode-sys-alpha}, for which:
\begin{enumerate}
\item  whose period $T_{period}(\dd) $ satisfies
\begin{eqnarray*}
T_{period}(\dd) \lesssim (|N_{cr}^{FD} - N| N_{cr}^{FD})^{-\frac12} 
 = \dd^{-\frac{1+\gamma}{2}}
 \end{eqnarray*}
 \item whose fundamental matrix, $\tilde{M}(t)$ of the linearized dynamics about $\sigma_*(t)$ satisfies the norm bound:
\begin{eqnarray*}
0 < s,t < T_{period}(\dd)\ \implies \| \tilde{M}(t) \tilde{M}^{-1} (s) \| \leq C\ \left( \frac{N+N_{cr}^{FD}}{|N-N_{cr}^{FD}|}\right)^{\frac12}\ =\ C\ \dd^{\frac{\gamma-1}{2}}, 
\end{eqnarray*}
\item and which is a small orbit about an equilibrium $\left(\ A^{eq},\alpha^{eq},\beta^{eq}\ \right)$ as in Proposition \ref{prop:delta-perorb}. 
See figures \ref{fig:proof-ab} and \ref{fig:proof-bb}.
\end{enumerate}
Take initial data for NLS/GP of the form
\begin{eqnarray}
u_0(x) = e^{i \theta (0) } \left(\  \tilde{A} (0) \psi_0(x)\ +\  [\tilde{\alpha} (0) + i \tilde{\beta}(0)] \psi_1(x)\  \right) ,
\end{eqnarray}
where $\theta(0)\in\RR$ is chosen arbitrarily.

Then there exists a solution $u(x,t)$ of \eqref{eqn:nlsdwp} 
 of the form 
 \begin{eqnarray}
u(x,t) = e^{i \theta (t)} \left( (\tilde{A} (t) + \eta_A(t)) \psi_0(x) + [(\tilde{\alpha} (t) + \eta_\alpha(t)) + i ( \tilde{\beta} (t) + \eta_\beta)] \psi_1(x)  + \tilde{R} (x,t) + w(x,t) \right),
\nn
\end{eqnarray}
where $\tilde{R}$ as in \eqref{eqn:tildeR}.

\noindent Furthermore,   ${\eta}(t)\equiv\left(\ \eta_A(t),\eta_\alpha(t),\eta_\beta(t)\ \right),\ \theta(t) \in C^1([0,T_*(\dd)])$,  $w \in L^\infty_t H^1_x \cap L^4_t L^\infty_x$  and $w$ satisfies the bounds
\begin{eqnarray}
\| {\eta} \|_{L^\infty_t[0,T_*(\dd)]} + \| w \|_{ L^\infty_t( [0,T_*(\dd)]; H^1_x)} + \| w \|_{ L^4_t( [0,T_*(\dd)]; L^\infty_x)} \lesssim \tau^{\frac{1}{2}+\delta_1} ,
\end{eqnarray}
for all $t \in I =[0,T_*(\dd)]=[0,T_{period}(\dd)\ \dd^{-\epsilon}]$ where $\epsilon>0$ is specified in the proof.
\end{thm}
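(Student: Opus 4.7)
The plan is to set up the integral system \eqref{eqn:intsys} as a fixed-point equation $\mathcal{M}(\eta, w) = (\eta, w)$ and apply a contraction mapping argument on the Banach ball
\[
\mathcal{B} = \left\{(\eta, w) :\ \|\eta\|_{L^\infty(I)} + \|w\|_{L^\infty(I; H^1_x)} + \|w\|_{L^4(I; L^\infty_x)} \leq C\dd^{1/2 + \delta_1}\right\}, \quad I = [0, T_*(\dd)].
\]
The phase $\theta$ does not feed back into the amplitude or radiative dynamics, so once $(\eta, w)$ are produced one recovers $\theta$ directly by integration and continuity from the second equation in \eqref{eqn:intsys}.

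Before attacking the fixed point I would establish sharp size and Strichartz estimates for the driving wave $\tilde R$ defined in \eqref{eqn:tildeR}. The time-dependent real, spatially constant coefficient $(\tilde A^2 + 3\tilde\alpha^2 + \tilde\beta^2)(t)$ contributes only a scalar integrating factor, leaving the dispersive estimates for $e^{-i(H-\Omega_0)t}P_c$ from Appendix~\ref{sec:est} intact. The forcing $P_c F_b(\sigma_*)$ is cubic in $\sigma_*$, so has pointwise size $\mathcal{O}(\dd^{3\gamma/2})$ and is periodic in $t$ with period $T_{period}(\dd) \sim \dd^{-(1+\gamma)/2}$. Strichartz estimates then control $\tilde R$ in $L^\infty_t H^1_x \cap L^4_t L^\infty_x$ over $I$, with a size that must (by the restrictions on $\gamma$) remain small relative to $\dd^{1/2+\delta_1}$.

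Next, I would estimate $\eta$ using the hypothesis $\|\tilde M(t)\tilde M^{-1}(s)\| \lesssim \dd^{(\gamma-1)/2}$: the integrand in \eqref{eqn:intsys} consists of a quadratic remainder $|\vec F_{FD}(\sigma_* + \eta) - \vec F_{FD}(\sigma_*) - D_\sigma \vec F_{FD}(\sigma_*) \eta| = \mathcal{O}(\dd^{\gamma/2}|\eta|^2 + |\eta|^3)$ and the coupling $\vec G_{FD} = \mathcal{O}(|\sigma_*|^2 \|R\|_{L^\infty} + |\sigma_*|\|R\|_{L^\infty}^2 + \|R\|_{L^\infty}^3)$ estimated as in Proposition 2.1. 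In parallel I would estimate $w$ from its Duhamel integral via $H^1$ and $L^4_t L^\infty_x$ Strichartz estimates for $e^{-i(H - \Omega_0)t}P_c$: the forcing $F_b(\sigma_* + \eta) - F_b(\sigma_*)$ is linear in $\eta$ with coefficient of size $\dd^\gamma$, while $F_R(\sigma_*, \eta; R, \bar R)$ is at least linear in $R$ with prefactors of size $\mathcal{O}(\dd^\gamma)$ (plus higher-order terms in $\eta$ and $R$), all small factors that help defeat the long integration interval.

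The main obstacle, and the source of the hypothesis $7/9 < \gamma < 1$, is closing these two estimates simultaneously inside $\mathcal{B}$. The monodromy produces the growth factor $\dd^{(\gamma-1)/2}$, large for $\gamma < 1$; the interval length $T_*(\dd) \sim \dd^{-(1+\gamma)/2 - \epsilon}$ is also large; and these two amplifications must be defeated by the smallness of the cubic and coupling nonlinearities. Requiring each output to be $o(\dd^{1/2+\delta_1})$ forces several inequalities on the exponents, and the binding one, after optimizing over $\delta_1 > 0$ and $\epsilon > 0$, should yield the threshold $\gamma > 7/9$. Once the self-mapping property of $\mathcal{M}$ on $\mathcal{B}$ is established, contraction in the same norm follows by subtracting two iterates and reapplying the same Strichartz and monodromy estimates, using Lipschitz continuity of the quadratic and cubic nonlinear terms.
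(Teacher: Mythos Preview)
Your plan is essentially the paper's proof: the same Banach ball, the same decoupling of $\theta$, the same use of the monodromy bound on $\tilde M(t)\tilde M^{-1}(s)$ for the $\eta$ integral, and the same Strichartz machinery for $w$. The term-by-term bookkeeping you anticipate is exactly what the paper carries out (it reduces to eight model terms for $\eta$ and ten for $w$, each yielding an exponent inequality; the binding ones are of the form $\gamma > \tfrac{7}{9} + c\epsilon$).

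One technical point you underweight: the paper does \emph{not} control $\tilde R$ in $L^\infty_{t,x}$ by Strichartz alone. A direct Strichartz estimate on the Duhamel integral for $\tilde R$ gives only $\|\tilde R\|_{L^\infty_t H^1_x} \lesssim \dd^{3\gamma/2}|I|^{3/4}$, which is of order $\dd^{9\gamma/8 - 3/8}$ and hence roughly $\dd^{1/2}$ near the threshold $\gamma = 7/9$. Several of the $\eta$-terms (e.g.\ the source term $\int_0^t \tilde A^2 \langle \tilde R,\chi\rangle\,ds$) need the sharper bound $\|\tilde R\|_{L^\infty_{t,x}} \lesssim \dd^{1+\delta_0}$ to close at $\gamma > 7/9$. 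The paper obtains this by writing $e^{i(H-\Omega_0 + N_{cr}^{FD})s} = \tfrac{1}{i}(H-\Omega_0+N_{cr}^{FD})^{-1}\tfrac{d}{ds}e^{i(H-\Omega_0+N_{cr}^{FD})s}$ and integrating by parts in $s$, exploiting that the residual phase $\dot{\tilde\theta} + \Omega_0 - N_{cr}^{FD}$ is $\mathcal O(\dd)$ to gain a full power of $\dd$ on the boundary and commutator terms. You should make this resolvent/IBP step explicit; ``Strichartz for $\tilde R$'' alone will either fail to close or force a stricter lower bound on $\gamma$ than $7/9$.
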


\begin{figure}
\scalebox{.75}{\input{proof-ab.tex}}
\caption{Let $N > N_{cr}^{FD}$.  By Theorem \ref{thm:main-eq},  for data $(\tilde{\alpha}_0,\tilde{\beta}_0)$ on the   periodic orbit 
$(\ \tilde{\alpha}(t),\tilde{\beta}(t)\ )$ and such that 
 $|\tilde{\alpha}_0-\alpha_{eq}|^2 + |\tilde{\beta}_0|^2 \sim \dd^{1+\delta}$, the solution $u(x,t)$  evolves within the  dashed annulus of width $\dd^{1+\delta_1},\ \ \delta_1>\delta$. } 
\label{fig:proof-ab}
\end{figure}

\begin{figure}
\scalebox{.75}{\input{proof-bb.tex}}
\caption{Let $N < N_{cr}^{FD}$.  By Theorem \ref{thm:main-eq},  for data $(\tilde{\alpha}_0,\tilde{\beta}_0)$ on the   periodic orbit 
$(\ \tilde{\alpha}(t),\tilde{\beta}(t)\ )$ and such that 
 $|\tilde{\alpha}_0|^2 + |\tilde{\beta}_0|^2 \sim \dd^{1+\delta}$, the solution $u(x,t)$  evolves within the  dashed annulus of width $\dd^{1+\delta_1}, \ \ \delta_1>\delta.$ } 
\label{fig:proof-bb}
\end{figure}


\subsection{Proof of theorem}
\label{sec:proof}

In the sequel, we will often use the contracted notation
\begin{eqnarray*}
L^p_t W^{k,q}_x = L^p ([0,T^*] ; W^{k.q} (\mathbb{R})),
\end{eqnarray*}
where $T^*$ is given by \eqref{Texist}, \eqref{periodN}.

We now show that the map 
\begin{eqnarray*}
(\vec{\eta}, w ) = \mathcal{M} (\vec{\eta}, w)
\end{eqnarray*}
defined in section \ref{sec:ansatz} is a contraction.

Define the  space 
\begin{eqnarray*}
X(I) = X([0,T_*(\dd)]) = \left\{\ (\eta,w): \eta\in L^\infty_t([0,T_*(\dd)]),\ \ w\in L^\infty_t([0,T_*(\dd)]; H^1_x) \cap L^4_t([0,T_*(\dd)]; L^\infty_x)
\ \right\}
\end{eqnarray*}
equipped with the natural norm
\begin{eqnarray*}
\| (\vec{\eta},w) \|_{X(I)} = \| {\eta} \|_{L^\infty_t(I)} + \| w \|_{L^4_t(I; L^\infty_x)} + \| w \|_{L^\infty_t(I; H^1_x)},
\end{eqnarray*}
 \begin{equation}
{\rm where}\ \ \ \  I=[0,T_*(\dd)].
 \nn\end{equation}
  We define $B_\dd (I) \subset X(I)$ such that $(\vec{\eta},R) \in B_\dd (I)$ if and only if
\begin{eqnarray}
\label{eqn:contmapball}
\| (\eta,w) \|_{X(I)}  \le \dd^{\frac{1}{2}+\delta_1} ,
\end{eqnarray}
 where $\delta_1> 0$ will be chosen later. 
 
We must prove the following:
\begin{prop} 
  The mapping $\mathcal{M} : X(I) \to X(I)$, defined in \eqref{fixedpoint},  
has the properties
\begin{enumerate}
\item $\mathcal{M}: B_\dd(I) \to B_\dd(I)$.
\item There exists $\kappa<1$ such that given
$(\vec{\eta}_j,w_j) \in  B_\dd(I)$ for $j = 1,2$, we have
\begin{eqnarray*}
d(\mathcal{M}(\vec{\eta}_1,w_1),\mathcal{M}(\vec{\eta}_2,w_2)) \leq\ \kappa\  d((\vec{\eta}_1,w_1) , (\vec{\eta}_2,w_2)).
\end{eqnarray*}
\end{enumerate} 
Thus, there exists a unique solution $(\vec{\eta},w)$ in 
$B_\dd(I)$.
\end{prop}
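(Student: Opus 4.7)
The plan is to set up a Banach fixed point argument on the closed ball $B_\dd(I)\subset X(I)$, establishing (1) invariance $\mathcal{M}(B_\dd(I))\subset B_\dd(I)$ and (2) a strict contraction property. Both follow from the same power‐counting structure, with (2) obtained by applying the estimates used for (1) to the differences $\mathcal{M}(\vec\eta_1,w_1)-\mathcal{M}(\vec\eta_2,w_2)$: since every nonlinear term is polynomial in $(\vec\eta,w)$, each contribution automatically picks up a factor $\|(\vec\eta_1,w_1)-(\vec\eta_2,w_2)\|_{X(I)}$, and so the identical bookkeeping yields a small Lipschitz constant once (1) is proved with room to spare. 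I would therefore concentrate on (1) and verify that each of the three norms defining $X(I)$ is bounded by $\dd^{1/2+\delta_1}$ at the end of the iteration.

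For the finite-dimensional component $\vec\eta(t)$, I would use the Duhamel representation in \eqref{eqn:intsys} together with the Floquet bound $\|\tilde M(t)\tilde M^{-1}(s)\|\lesssim \dd^{(\gamma-1)/2}$ coming from Section~\ref{subsec:stability-finite-dim} and the proposition of Section~\ref{sec:hamco}. The Taylor remainder $\vec F_{FD}(\sigma_*+\eta)-\vec F_{FD}(\sigma_*)-D\vec F_{FD}(\sigma_*)\eta$ is of size $O(\tilde A|\eta|^2+|\eta|^3)\sim \dd^{\gamma/2}\dd^{1+2\delta_1}+\dd^{3/2+3\delta_1}$, while the coupling term $\vec G_{FD}$ is controlled by \eqref{eqn:est-ERROR-A}--\eqref{eqn:est-ERROR-beta} in terms of $|\vec\sigma|$ and $\|R\|_{L^\infty_x}$, the latter split as $R=\tilde R+w$. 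Multiplying by the Floquet factor $\dd^{(\gamma-1)/2}$ and integrating over the interval $[0,T_*(\dd)]$ of length $\dd^{-(1+\gamma)/2-\epsilon}$ then reduces the invariance bound to a set of polynomial inequalities in $\dd$ that hold for $\epsilon,\delta_1$ small provided $\gamma$ is close enough to $1$.

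For the dispersive component $w(t)$, I would rely on Strichartz estimates for the perturbed Schrödinger group $e^{-i(H-\Omega_0)t}P_c$ on the admissible pair that drives $L^\infty_tH^1_x\cap L^4_tL^\infty_x$; these are guaranteed under the smoothness/decay hypotheses on $V$ recorded in Appendix~\ref{sec:est} and \cite{W}. The time-dependent modulation $\exp\bigl(i\int_s^t(\tilde A^2+3\tilde\alpha^2+\tilde\beta^2)\,ds'\bigr)$ is unitary on $L^2$ and commutes with the Strichartz estimates. The source term splits into: the difference $F_b(\sigma_*+\eta)-F_b(\sigma_*)$, which is essentially linear in $\eta$ with coefficient $|\sigma_*|^2\sim \dd^\gamma$; the cubic pieces in $F_R$ given by \eqref{eqn:est-ERROR-gperp}; and the leading resonant piece from $\tilde R$ itself, which satisfies \eqref{eqn:tildeR} with source of pointwise size $\sim\dd^{3\gamma/2}$ and contributes $\|\tilde R\|_{L^\infty_tL^2_x\cap L^4_tL^\infty_x}\lesssim T_*(\dd)\,\dd^{3\gamma/2}$ after a Strichartz estimate. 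Combining these and tracking the $H^1$ regularity through the potential term and cubic power nonlinearity, the required invariance reduces once more to a balance of powers of $\dd$.

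The main obstacle is the linear-in-$w$ drift $|\sigma_*|^2 w$ hidden in $F_R$: its coefficient $\dd^\gamma$ is not small, and a naive Grönwall estimate across the full interval $[0,T_*(\dd)]$ would give growth $\exp(C\dd^{\gamma}\cdot\dd^{-(1+\gamma)/2-\epsilon})=\exp(C\dd^{(\gamma-1)/2-\epsilon})$. This is only controllable because $\gamma<1$ makes that exponent negative, but the competing loss factor $\dd^{(\gamma-1)/2}$ from the Floquet bound on $\tilde M\tilde M^{-1}$ and the requirement that $\tilde R$ and the cross terms still close at threshold $\dd^{1/2+\delta_1}$ force $\gamma$ not merely below $1$ but above a strictly positive threshold. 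Solving this linear programming problem on the exponents is how the condition $\gamma>7/9$ stated in Theorem~\ref{thm:main-eq} emerges; with $\gamma,\epsilon,\delta_1$ so chosen, both the invariance and the contraction estimates close with a small factor, completing the proof via the Banach fixed point theorem.
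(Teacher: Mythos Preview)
Your overall architecture is right, and the power-counting for the Taylor remainder and most cross terms is essentially what the paper does. However, there are two genuine gaps in your treatment of the dispersive component.

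First, your handling of the linear-in-$w$ term $\tilde A^2\psi_0^2 w$ is incorrect. You invoke a Gr\"onwall argument and write that the resulting factor $\exp(C\dd^{(\gamma-1)/2-\epsilon})$ is ``controllable because $\gamma<1$ makes that exponent negative''---but a negative exponent of a \emph{small} parameter is large, so this exponential blows up as $\dd\to0$; Gr\"onwall cannot close here. The paper never uses Gr\"onwall for this term. Instead it applies the inhomogeneous Strichartz estimate \eqref{eqn:strich2} with the dual pair $(\tilde p',\tilde q')=(4/3,1)$, exploiting the spatial localization of $\psi_0^2$ to place $\psi_0^2 w$ in $L^{4/3}_t W^{1,1}_x$. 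This replaces the naive $|I|$ by $|I|^{3/4}$, yielding a prefactor $\dd^{\gamma}\cdot|I|^{3/4}=\dd^{5\gamma/8-3/8-3\epsilon/4}$ in front of $\|w\|_{L^\infty_tH^1_x}$, which is a genuine contraction once $\gamma>3/5$; this is the mechanism behind the $\gamma>7/9$ threshold, not any exponential balance.

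Second, your bound $\|\tilde R\|\lesssim T_*(\dd)\,\dd^{3\gamma/2}$ from a bare Strichartz estimate is too crude for the $\eta$-equation cross terms such as $\int_0^t\tilde A^2\langle\tilde R,\chi\rangle\,ds$. The paper's Proposition~\ref{lem:tildeR} extracts an \emph{extra} gain by integration by parts in $s$ against $e^{i(H-\Omega_0+N_{cr}^{FD})s}$ and boundedness of the resolvent $(H-\Omega_0+N_{cr}^{FD})^{-1}P_c$ on $H^1$, giving the time-uniform bound $\|\tilde R\|_{L^\infty_{t,x}}\lesssim\dd^{3\gamma/2}$ without any $|I|$ loss. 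This nonresonance argument is what allows the $\tilde R$-dependent terms in the $\eta$ integral equation to close at $\dd^{1/2+\delta_1}$; with only the Strichartz bound you quoted (with a full $T_*$ factor), several of those terms would require $\gamma>1$ and fail.
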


\begin{proof}
We begin by proving necessary bounds on $\tilde{R}$. 

\begin{prop}
\label{lem:tildeR} 
Given a solution $\vec{\sigma}_{*} = (\tilde{A}(t), \tilde{\alpha}(t), \tilde{\beta}(t))$ to \eqref{eqn:alpha-reduced1}, as in Proposition \ref{prop:delta-perorb}, 
there exists $\dd_0>0, \delta_0>\delta$, such that for all $\dd<\dd_0$ we have the following:\\
 for all $t \in I$,  and defining
\begin{eqnarray}
\tilde{R} = \int_0^t e^{-iH(t-s)-i \Omega_0 (t-s) +i \int_s^t (\tilde{A}^2 + 3 \tilde{\alpha}^2 + \tilde{\beta}^2) (s') ds'}\ P_b F(\vec{\sigma}_{*}) ds
\end{eqnarray}
for $F_b$ defined in \eqref{eqn:sys-id}, we have
\begin{eqnarray*}
\| \tilde{R} \|_{L^\infty_t(I;L^\infty_x)} \lesssim \dd^{1+\delta_0}.
\end{eqnarray*}
\end{prop}

\begin{proof}
Note that it is only  the magnitude of the components of ${\sigma}_{*}$, via $F(\sigma_*(t))$, which  factor in to the bounds of $\tilde{R}$.

From Proposition \ref{prop:delta-perorb} and the finite dimensional conservation laws

\begin{eqnarray}
\tilde{A}(t) = \dd^{\frac{\gamma}{2}}\ +\  \epsilon_0 (t),
\end{eqnarray}
where $|\epsilon_0| \sim \dd^{\frac{1+\delta}{2}}$.
Based on the expansion in the ansatz, we have
\begin{eqnarray*}
F_b ({\sigma}_*) & = & P_c [\tilde{A}^3 \psi_0^3 + (\tilde{A}^2 (\tilde{\alpha}-i \tilde{\beta})+2 \tilde{A}^2 (\tilde{\alpha}+i \tilde{\beta}) ) \psi_0^2 \psi_1 + ( (\tilde{\alpha} + i \tilde{\beta})^2 \tilde{A} + 2 \tilde{A} (\tilde{\alpha}^2 + \tilde{\beta}^2) ) \psi_0 \psi_1^2 \\
& + & (\alpha^2 + \beta^2)(\alpha + i \beta) \psi_1^3].
\end{eqnarray*}
The term of largest order in this expansion is 
\begin{eqnarray}
\tilde{A}^3 \psi_0^3.
\end{eqnarray}
The bounds on the remaining terms will follow similarly, so we look at
\begin{eqnarray}
\int_0^t e^{-i(H-\Omega_0)(t-s) - i \int_s^t (\tilde{A}^2 + 3 \tilde{\alpha}^2 + \tilde{\beta}^2) (s') ds'} P_c ( \dd^{\frac{\gamma}{2}} + \epsilon_0)^3 \psi_0^3 ds\nn\\
\sim\ \dd^{\frac{3\gamma}{2}} \int_0^t e^{-i(H-\Omega_0)(t-s) - i \int_s^t (\tilde{A}^2 + 3 \tilde{\alpha}^2 + \tilde{\beta}^2) (s') ds'} P_c  \psi_0^3 ds .\nn
\end{eqnarray}
In particular, we will show there exists $\delta_0>\delta$, such that 
\begin{eqnarray}
\label{eqn:n}
\| \dd^{\frac{3 \gamma}{2}} \int_0^t e^{-i(H-\Omega_0)(t-s) - i \int_s^t (\tilde{A}^2 + 3 \tilde{\alpha}^2 + \tilde{\beta}^2) (s') ds'} P_c  \psi_0^3 ds \|_{L^\infty_{t,x}} \lesssim \dd^{1+\delta_0}.
\end{eqnarray}

From \eqref{eqn:ode-sys-alpha}, we know
\begin{eqnarray}
\dot{\tilde{\theta}} = -\Omega_0 + \tilde{A}^2 + 3 \tilde{\alpha}^2 (t) + \tilde{\beta}^2 .
\end{eqnarray}
Hence, the leading order constant terms from the derivative are $N_{cr}^{FD} - \Omega_0$.  We write
\begin{eqnarray*}
\dd^{\frac{3 \gamma}{2}}\ \int_0^t e^{-i(H-\Omega_0)(t-s)-i \int_s^t (\tilde{A}^2 + 3 \tilde{\alpha}^2 + \tilde{\beta}^2) (s') ds'} P_c  \psi_0^3 ds = \\
\dd^{\frac{3 \gamma}{2}}\ e^{-iHt-i \tilde \theta (t)} \int_0^t e^{i \tilde \theta (s) + i\Omega_0 s - i N_{cr}^{FD} s}  e^{i Hs - i \Omega_0 s+ i N_{cr}^{FD} s}  P_c  \psi_0^3 ds = \\
\dd^{\frac{3 \gamma}{2}}\ e^{-iHt-i \tilde \theta (t)} \int_0^t e^{i \tilde\theta (s) + i\Omega_0 s - i N_{cr}^{FD} s} \frac{d}{ds} \frac{1}{i} (H-\Omega_0+N_{cr}^{FD})^{-1}  e^{i Hs - i \Omega_0 s+ i N_{cr}^{FD} s}  P_c  \psi_0^3 ds,
\end{eqnarray*}
where the resolvent $(H-\Omega_0+N_{cr}^{FD})^{-1}$ is a well-defined operator in $H^1$ on $P_c  \psi_0^3$, see Appendix \ref{sec:est}.
Using the estimates from Appendix \ref{sec:est} and integration by parts, we have
\begin{eqnarray*}
\dd^{\frac{3 \gamma}{2}} && \!\!\!\!\!\!\!\!\!\!\!\!\! \left|  e^{-iHt-i \theta (t)} \int_0^t  e^{i \tilde \theta(s) + i\Omega_0 s - i N_{cr}^{FD} s} e^{i Hs - i \Omega_0 s + i N_{cr}^{FD} s}  P_c  \psi_0^3 ds \right| \\
&\lesssim&  \dd^{\frac{3 \gamma}{2}} \left| (H-\Omega_0+N_{cr}^{FD})^{-1} P_c  \psi_0^3 \right|  
+ \dd^{\frac{3 \gamma}{2}} \left| e^{-iHt} e^{-i \tilde \theta (t)} (H-\Omega_0+N_{cr}^{FD})^{-1} P_c  \psi_0^3 \right| \\
&&+ \dd^{\frac{3 \gamma}{2}} \mathcal{O} (\dd) \ \int_0^t \| e^{iHs} (H-\Omega_0+N_{cr}^{FD})^{-1} P_c  \psi_0^3 \|_{L^\infty} ds \\
&\lesssim&  \dd^{\frac{3 \gamma}{2}} \| (H-\Omega_0+N_{cr}^{FD})^{-1} P_c  \psi_0^3 \|_{H^1}  
+ \dd^{\frac{3 \gamma}{2}} \| e^{-iHt} e^{-i \tilde \theta (t)} (H-\Omega_0+N_{cr}^{FD})^{-1} P_c  \psi_0^3 \|_{H^1} \\
&&+ \dd^{\frac{3 \gamma}{2}} \mathcal{O} (\dd) \ \int_0^t \| e^{iHs} (H-\Omega_0+N_{cr}^{FD})^{-1} P_c  \psi_0^3 \|_{H^1} ds \\
&\lesssim&  \dd^{\frac{3 \gamma}{2}}  + \dd^{\frac{3 \gamma}{2}} 
+ \dd^{\frac{3 \gamma}{2}} \mathcal{O} (\dd)\ t.
\end{eqnarray*}
By selecting $\gamma > \frac{2}{3}$, we ensure that all terms resulting from integration by parts are bounded by $\dd^{1+\delta_1}$ for all $t \in I = [0,\dd^{-\frac{1+\gamma}{2} - \epsilon}]$.  
\end{proof}

Using dispersive estimates, we have additional bounds for $\tilde R$ given by the following
\begin{lem}
\label{lem:tildeRstr}
Given $\tilde{R}$ defined as in \eqref{eqn:tildeR}, we have
\begin{eqnarray*}
\| \tilde{R} \|_{H^1} \lesssim |\tilde{A}|^3 |I|^{\frac34}
\end{eqnarray*}
and
\begin{eqnarray*}
\| \tilde{R} \|_{L^p (I; L^q_x)} \lesssim |\tilde{A}|^3 |I|^{\frac34}
\end{eqnarray*}
 for any Strichartz pair $(p,q)$.
\end{lem}

\begin{proof}
To begin, we first note that by the $L^2$ boundedness of wave operators discussed in Appendix \ref{sec:est}, we need only consider 
\begin{eqnarray*}
\| \langle H \rangle \tilde{R} \|_{L^2_x}.
\end{eqnarray*}

As in Lemma \ref{lem:tildeR}, we look at the the worst term, namely
\begin{eqnarray*}
\| \langle H^s \rangle \tilde{R} \|_{L^\infty L^2} & \leq & \| \int_0^t e^{i H(t-s)} \tilde{A}^3 (s) \langle H^s \rangle  P_c \psi_0^3 ds \|_{L^\infty L^2} \\
& \leq & \dd^{\frac{3 \gamma}{2}} \| \psi_0^3 \|_{L^1} |I|^{\frac34} \\
& \lesssim & \dd^{\frac{3 \gamma}{2}} |I|^{\frac34} ,
\end{eqnarray*}
where we have used the Strichartz estimate \eqref{eqn:strich2} with dual Strichartz norm $L^\frac43 (I; L^1)$. 

The Strichartz estimate follows similarly.
\end{proof}

\begin{rem}
Though such estimates do not arise in this work, let us also point out the following simple estimate
\begin{eqnarray}
| \langle \tilde{R}, \chi_1 \rangle | & \lesssim & \dd^{\frac{3 \gamma}{2}} \int_0^t \langle \chi_1, e^{-i H (t-s)} \chi_2 \rangle ds \\
& \lesssim & \dd^{\frac{3 \gamma}{2}} \int_0^t \langle t-s \rangle^{-\frac{3}{2}} ds \lesssim \dd^{\frac{3 \gamma}{2}} t^{-\frac{1}{2}} ,
\end{eqnarray}
which is proved in \cite{Sch} and may be applicable when trying to prove long or infinite time results on similar problems to the one studied here.
\end{rem}
  
Note, by standard Sobolev embeddings and the contraction assumption, we have
\begin{eqnarray}
\| R \|_{ L^\infty_{t,x} } \lesssim \dd^{\frac{1}{2} + \delta_1}.
\end{eqnarray}

Since we have proper bounds on $\tilde{R}$, we must now bound
\begin{eqnarray*}
\| \vec{\eta} \|_{L^\infty} & = & \| \int_0^t M (t) M^{-1}(s) \big[ \vec{F}_{FD} (\vec{\sigma}_{*} + \vec{\eta}) -  \vec{F}_{FD} (\vec{\sigma}_{*}) - D_{\sigma^*} \vec{F}_{FD} (\vec{\sigma}_{*}) \vec{\eta}  \\
&&  + \vec{G}_{FD} (\vec{\sigma}_{*} + \vec{\eta};R,\bar{R}) \big]  ds \|_{L^\infty}
\end{eqnarray*}
and
\begin{eqnarray*}
\| w \|_{L^\infty H^1 \cap L^4 L^\infty} & = & \| \int_0^t e^{iH(t-s)}  e^{i \Omega_0 (t-s)} e^{i \int_s^{t} (\tilde{A}^2 + 3 \tilde{\alpha}^2 + \tilde{\beta}^2) (s') ds'} \\
&& \times P_c \left[ (F_b (\vec{\sigma}_{*}+\vec{\eta}) - F_b (\vec{\sigma}_{*})) + F_R (\vec{\sigma}_{*},\vec{\eta};R,\bar{R}) \right] ds \|_{L^\infty H^1 \cap L^4 L^\infty} .
\end{eqnarray*}

From Appendix \ref{sec:est}, we have for any Strichartz pair $(p,q)$ that
\begin{eqnarray}
\| \int_0^t e^{iH(t-s)} P_c f \|_{L^p W^{1,q}} \lesssim \| f(x,t) \|_{L^{\tilde{p}}_t W^{1,\tilde{q}}_x},
\end{eqnarray}
where $(\tilde{p},\tilde{q})$ is a dual Strichartz pair.  In one dimension, it is useful to note we may take $\tilde{p} = \frac{4}{3}+\mu_1$ and $\tilde{q} = 1+\mu_2$ for $\mu_j > 0$ small, $j = 1,2$.  In other words, we can be as close to the endpoint estimate of $\tilde{p} = \frac{4}{3}$ and $\tilde{q} = 1$ as we need to be.   

By reducing the system and including the lowest order terms from the expansions above, we can reduce to controlling a model problem of the form
\begin{eqnarray*}
\eta &=& \int_0^t \left( \frac{N+N_{cr}^{FD}}{|N-N_{cr}^{FD}|}  \right)^{\frac12} [ \tilde{A} \eta^2 + \eta^3 \\
&+& \tilde{A}^2 \langle \tilde{R}, \chi \rangle + \tilde{A}^2 \langle w, \chi \rangle + 2 \tilde{A} \eta \langle \chi, \tilde{R} \rangle + \eta^2 \langle \tilde{R}, \chi \rangle \\
&+& \tilde{A} \eta \langle w, \chi \rangle + \eta^2 \langle w, \chi \rangle  ] ds \\
& = &  {Term}_1^\eta + {Term}_2^\eta + \dots + {Term}_8^\eta] ,
\end{eqnarray*}
where $\chi \in \mathcal{S}$, and
\begin{eqnarray*}
w & = & \int_0^t e^{iH(t-s)}  e^{i \Omega_0 (t-s)} e^{i \int_s^{t} (\tilde{A}^2 + 3 \tilde{\alpha}^2 + \tilde{\beta}^2) (s') ds'} \\
& \times & P_c [(3 \tilde{A}^2 \eta \psi_0^3 + 2 \tilde{A} \tilde{R} \eta + \tilde{A} \psi_0^2 \tilde{R} \eta) \\ 
&+& (\tilde{A} ^2 \psi_0^2 w + w^3  + 2 \tilde{A} \eta w + \tilde{A} \langle \chi, w \rangle \tilde{R} + \tilde{A} \langle \chi, \tilde{R} \rangle w ) \\
& + & (\tilde{A}^2 \psi_0^2 \tilde{R} + \tilde{A} \langle \chi, \tilde{R} \rangle \tilde{R} ) ] ds \\
& = &  {Term}_1^w + {Term}_2^w + \dots + {Term}_{10}^w.
\end{eqnarray*}

For our model problem, we now take $(\eta,w) \in X$, $\| (\eta,w) \|_{X} \leq \dd^{\frac{1}{2}+\delta_1}$.  In addition, take $(\eta_j,w_j) \in X$, $\| (\eta_j,w_j) \|_{X} \leq \dd^{\frac{1}{2}+\delta_1}$ for $j=1,2$.

It follows
\begin{eqnarray*}
\| {Term}_1^\eta \|_{L^\infty} & = & \left( \frac{N+N_{cr}^{FD}}{|N-N_{cr}^{FD}|}  \right)^{\frac12} \| \int_0^t \tilde{A} (s) \eta^2 (s) ds \|_{L^\infty} \\ 
& \leq & \dd^{\frac{\gamma}{2} - \frac{1}{2} + \frac{\gamma}{2} - \frac{1+\gamma}{2}-\epsilon} \| \eta \|^2_{L^\infty}\ \\
& \leq & \dd^{\frac{\gamma }{2} + 2\delta_1 -\epsilon}\ \le\ \tau^{\frac{1}{2}+\delta_1}\end{eqnarray*}
provided we choose
 \begin{equation}
 \delta_1 > \frac{1-\gamma}{2}+\epsilon.
 \label{constraint-1}
 \end{equation}
  In addition, it is clear from the analysis there that
\begin{eqnarray*}
\| {Term}_1^{\eta_1} - {Term}_1^{\eta_2} \|_{L^\infty} & \leq &
\dd^{\frac{\gamma - 1}{2} + \delta_1 -\epsilon} \| \eta_1 - \eta_2
\|_{L^\infty} .
\end{eqnarray*} 

Next, for ${Term}_2^\eta$ we get
\begin{eqnarray*}
\| {Term}_2^\eta \|_{L^\infty} & = & \left( \frac{N+N_{cr}^{FD}}{|N-N_{cr}^{FD}|} \right)^{\frac12} \| \int_0^t \eta^3 (s) ds \|_{L^\infty} \\ 
& \leq & \dd^{\frac{\gamma}{2} - \frac{1}{2} - \frac{1+\gamma}{2} - \epsilon} \| \eta \|^3_{L^\infty}  \\
& \leq &  \dd^{\frac{1}{2}+3\delta_1-\epsilon}
\end{eqnarray*}
provided 
\begin{equation}
2\delta_1>\epsilon.
\label{constraint-2}
\end{equation}
Similarly, we have
\begin{eqnarray*}
\| {Term}_2^{\eta_1} - {Term}_2^{\eta_2} \|_{L^\infty} & \leq & \dd^{2\delta_1-\epsilon}\ \| \eta_1 - \eta_2 \|_{L^\infty}.
\end{eqnarray*}

For ${Term}_3^\eta$, we have
\begin{eqnarray*}
\| {Term}_3^\eta \|_{L^\infty} & = & \left( \frac{N+N_{cr}^{FD}}{|N-N_{cr}^{FD}|} \right)^{\frac12} \| \int_0^t \tilde{A}^2 \langle \tilde{R}, \chi \rangle ds \|_{L^\infty} \\ 
& \leq & \dd^{\frac{\gamma}{2} - \frac{1}{2} + \gamma + 1-\frac12 - \frac{\gamma}{2} - \epsilon} 
  \sim \tau^{\gamma-\epsilon} \le  \dd^{\frac{1}{2}+\delta_1}
\end{eqnarray*}
provided 
\begin{equation}
\gamma -\epsilon > \frac{1}{2}+\delta_1.
\label{constraint-3}
\end{equation}

  Since ${Term}_3^\eta$ is independent of $(\eta,w)$, it follows easily that
\begin{eqnarray*}
\| {Term}_3^{\eta_1} - {Term}_3^{\eta_2} \|_{L^\infty} = 0,
\end{eqnarray*}
meaning this term does not factor into the contraction.

The bound on ${Term}_4^\eta$ is of the form
\begin{eqnarray*}
\| {Term}_4^\eta \|_{L^\infty}  & = & \left( \frac{N+N_{cr}^{FD}}{|N-N_{cr}^{FD}|} \right)^{\frac12} \| \int_0^t \tilde{A}^2 \langle w, \chi \rangle ds \|_{L^\infty} \\ 
& \leq & \dd^{\frac{\gamma}{2} - \frac{1}{2} + \gamma } \| \chi \|_{L^{\frac43} L^1} \| w \|_{L_t^4 L_x^\infty}  \\
& \leq & \dd^{ \frac{\gamma}{2} - \frac{1}{2} + \gamma -\frac38 -\frac{3\gamma}{8} - \frac{3 \epsilon}{4} } \| w \|_{L^4 L^\infty}  \\
& \leq & \dd^{ \frac{9\gamma}{8} - \frac{7}{8} - \frac{3 \epsilon}{4} } \| w \|_{L^4 L^\infty}  \ \le\  \dd^{\frac{1}{2}+\delta_1},
\end{eqnarray*}
provided
\begin{equation}
\gamma > \frac79 + \frac23\epsilon.
\label{constraint-4}
\end{equation}

It follows immediately that
\begin{eqnarray*}
\| {Term}_4^{\eta_1} - {Term}_4^{\eta_2} \|_{L^\infty} & \leq & \dd^{\frac{\gamma}{2} - \frac{1}{2} + \gamma -\frac38 -\frac{3\gamma}{8} - \frac{3 \epsilon}{4} } \| w \|_{L^4 L^\infty}  \\
& \leq & \dd^{ \frac{9\gamma}{8} - \frac{7}{8} - \frac{3 \epsilon}{4} } \ \| w_1 - w_2 \|_{L^4 L^\infty} .
\end{eqnarray*}

For ${Term}_5^\eta$, we have
\begin{eqnarray*}
\| {Term}_5^\eta \|_{L^\infty}  & = & \left( \frac{N+N_{cr}^{FD}}{|N-N_{cr}^{FD}|} \right)^{\frac12} \| \int_0^t 2 \tilde{A} \eta \langle \chi, \tilde{R} \rangle ds \|_{L^\infty} \\ 
& \leq & \dd^{\frac{\gamma}{2} - \frac{1}{2} + \frac{\gamma}{2}- \frac12 - \frac{\gamma}{2} - \epsilon } \| \eta \|_{L^\infty} \| \tilde{R} \|_{L^\infty_{x,t}} \\
& \leq & \dd^{\frac{\gamma}{2} - \epsilon }  \| \eta \|_{L^\infty} \le \dd^{\frac{1}{2}+\delta_1},\ \ (\gamma>2\epsilon),
\end{eqnarray*}
which follows from previous constraints on $\gamma$.  Once again, we have as well
\begin{eqnarray*}
\| {Term}_5^{\eta_1} - {Term}_5^{\eta_2} \|_{L^\infty} & \leq & \dd^{\frac{\gamma}{2} - \epsilon }  \| \eta_1 - \eta_2 \|_{L^\infty}.
\end{eqnarray*}

For ${Term}_6^\eta$, we have using Proposition \ref{lem:tildeR}
\begin{eqnarray*}
\| {Term}_6^\eta \|_{L^\infty}  & = & \left( \frac{N+N_{cr}^{FD}}{|N-N_{cr}^{FD}|} \right)^{\frac12} \| \int_0^t \eta^2 \langle \tilde{R}, \chi \rangle ds \|_{L^\infty} \\ 
& \leq & \dd^{\frac{\gamma}{2} - \frac{1}{2} + 1 - \frac12 - \frac{\gamma}{2} - \epsilon } \| \eta \|_{L^\infty}^2  \\
& \leq & \dd^{\frac{1}{2}+\delta_1 - \epsilon } \| \eta \|_{L^\infty} \le \dd^{\frac{1}{2}+\delta_1},
\end{eqnarray*}
which follows if $\delta_1 > \epsilon$.  Furthermore, 
\begin{eqnarray*}
\| {Term}_6^{\eta_1} - {Term}_6^{\eta_2} \|_{L^\infty} & \leq &
\dd^{\frac{1}{2}+\delta_1 - \epsilon } \| \eta_1 - \eta_2
\|_{L^\infty}.
\end{eqnarray*}

For ${Term}_7^\eta$, we have
\begin{eqnarray*}
\| {Term}_7^\eta \|_{L^\infty}  & = & \left( \frac{N+N_{cr}^{FD}}{|N-N_{cr}^{FD}|} \right)^{\frac12}  \| \int_0^t \tilde{A} \eta \langle w, \chi \rangle ds \|_{L^\infty} \\ 
& \leq & \dd^{\frac{\gamma}{2} - \frac{1}{2} + \frac{\gamma}{2} - \frac38 - \frac{3\gamma}{8} - \frac{3 \epsilon}{4} } \| \eta \|_{L^\infty} \| w \|_{L^4 L^\infty}  \\
& \leq & \dd^{\frac{5\gamma}{8} - \frac38  - \frac{3 \epsilon}{4} + \delta_1} \| \eta \|_{L^\infty} \le  \dd^{\frac{1}{2}+\delta_1}
\end{eqnarray*}
provided  
\begin{equation}
\gamma > \frac35 + \frac65\epsilon-\frac85\delta_1.
\label{constraint-5}
\end{equation}
 Furthermore, 
\begin{eqnarray*}
\| {Term}_7^{\eta_1} - {Term}_7^{\eta_2} \|_{L^\infty} & \le& \dd^{\frac{5\gamma}{8} - \frac38  - \frac{3 \epsilon}{4} + \delta_1}\ \| (\eta_1-\eta_2,w_1-w_2) \|_X.
\end{eqnarray*}

For ${Term}_8^\eta$, we have
\begin{eqnarray*}
\| {Term}_8^\eta \|_{L^\infty}  & = & \left( \frac{N+N_{cr}^{FD}}{|N-N_{cr}^{FD}|} \right)^{\frac12} \| \int_0^t \eta^2 \langle w, \chi \rangle ds \|_{L^\infty} \\ 
& \leq & \dd^{\frac{\gamma}{2} - \frac{1}{2}  - \frac38 - \frac{3\gamma}{8} - \frac{3 \epsilon}{4} } \| \eta \|_{L^\infty}^2 \| w \|_{L^4 L^\infty}  \\
& \leq & \dd^{\frac{\gamma}{8} + \frac{1}{8} - \frac{3 \epsilon}{4}+ 2 \delta_1 }\| w \|_{L_t^4 L_x^\infty} \le \dd^{\frac{1}{2}+\delta_1}
\end{eqnarray*}
by previous constraints on $\gamma$.  Again, it follows  that
\begin{eqnarray*}
\| {Term}_8^{\eta_1} - {Term}_8^{\eta_2} \|_{L^\infty} & \leq & \dd^{\frac{\gamma}{8} + \frac{1}{8} - \frac{3 \epsilon}{4}+ 2 \delta_1 }\| w_1 - w_2 \|_{L_t^4 L_x^\infty} .
\end{eqnarray*}
\bigskip

We now study the map on the dispersive part, $w$. For ${Term}_1^w$,using the Strichartz estimates \eqref{eqn:strich2}, we have 
\begin{eqnarray*}
\| {Term}_1^w \|_{L^\infty H^1 \cap L^4 L^\infty} & = & \| \int_0^t e^{iH(t-s)}  e^{i \Omega_0 (t-s)} e^{i \int_s^{t} (\tilde{A}^2 + 3 \tilde{\alpha}^2 + \tilde{\beta}^2) (s') ds'} P_c 3 \tilde{A}^2 \eta \psi_0^3 ds \|_{L^\infty H^1 \cap L^4 L^\infty} \\ 
& \leq & \dd^{\gamma} \| \eta \|_{L^\infty} \| \psi_0^3 \|_{L^{\frac43} W^{1,1}}  \\
& \leq & \dd^{\gamma - \frac38 - \frac{3 \gamma}{8} - \frac{3 \epsilon}{4}} \| \eta \|_{L^\infty} \le \dd^{\frac{1}{2}+\delta_1},\ (\gamma>\frac35), 
\end{eqnarray*}
by previous constraints on $\gamma$.  Hence, 
\begin{eqnarray*}
\| {Term}_1^{w_1} - {Term}_1^{w_2} \|_{L^\infty H^1 \cap L^4 L^\infty}  & \leq & \dd^{\gamma - \frac38 - \frac{3 \gamma}{8} - \frac{3 \epsilon}{4}} \| \eta_1 - \eta_2 \|_{L^\infty} .
\end{eqnarray*}

For ${Term}_2^w$, we have using bounds on $\tilde{R}$ in $H^1$ 
 (Lemma \ref{lem:tildeRstr})
\begin{eqnarray*}
\| {Term}_2^w \|_{L_t^\infty H_x^1 \cap L_t^4 L_x^\infty}  & = & \| \int_0^t e^{iH(t-s)}  e^{i \Omega_0 (t-s)} e^{i \int_s^{t} (\tilde{A}^2 + 3 \tilde{\alpha}^2 + \tilde{\beta}^2) (s') ds'} P_c 2 \tilde{A} \tilde{R} \eta ds \|_{L^\infty H^1 \cap L^4 L^\infty} \\ 
& \leq & \dd^{\frac{\gamma}{2}} \| \tilde{R} \|_{L^1 H^1} \| \eta \|_{L^\infty} \\
& \leq & \dd^{2 \gamma - \frac{7}{8} - \frac{7\gamma}{8} - \frac{7\epsilon}{4}} \| \eta \|_{L^\infty} \le  \dd^{\frac{9 \gamma}{8} - \frac{7}{8} - \frac{7\epsilon}{4}} \| \eta \|_{L^\infty} \le  \dd^{\frac{1}{2}+\delta_1},
\end{eqnarray*}
which follows using $\gamma > \frac79+\frac{14}{9}\epsilon$.  
Hence, it follows that
\begin{eqnarray*}
\| {Term}_2^{w_1} - {Term}_2^{w_2} \|_{L^\infty H^1 \cap L^4 L^\infty} & \leq & \dd^{\frac{9 \gamma}{8} - \frac{7}{8} - \frac{7\epsilon}{8}} \| \eta_1 - \eta_2 \|_{L^\infty}.
\end{eqnarray*}

For ${Term}_3^w$, using Lemma \ref{lem:tildeRstr} once again we have 
\begin{eqnarray*}
\| {Term}_3^w \|_{L^\infty H^1 \cap L^4 L^\infty} & = & \| \int_0^t e^{iH(t-s)}  e^{i \Omega_0 (t-s)} e^{i \int_s^{t} (\tilde{A}^2 + 3 \tilde{\alpha}^2 + \tilde{\beta}^2) (s') ds'} P_c \tilde{A} \psi_0^2 \tilde{R} \eta ds \|_{L^\infty H^1 \cap L^4 L^\infty} \\ 
& \leq & \dd^{\frac{\gamma}{2} } \| \tilde{R} \psi_0^2 \|_{L^1 H^1} \| \eta \|_{L^\infty}  \\
& \leq & \dd^{2 \gamma - \frac{7}{8} - \frac{7\gamma}{8} - \frac{7\epsilon}{4}} \| \eta \|_{L^\infty} \le  \dd^{\frac{9 \gamma}{8} - \frac{7}{8} - \frac{7\epsilon}{4}} \| \eta \|_{L^\infty} \le  \dd^{\frac{1}{2}+\delta_1},
\end{eqnarray*}
which follows easily from previous constraints on $\gamma$.  
Hence, it follows directly that
\begin{eqnarray*}
\| {Term}_3^{w_1} - {Term}_3^{w_2} \|_{L^\infty H^1 \cap L^4 L^\infty}  & \leq & \dd^{\frac{9 \gamma}{8} - \frac{7}{8} - \frac{7\epsilon}{4}}  \| \eta_1 - \eta_2 \|_{L^\infty} .
\end{eqnarray*}

For ${Term}_4^w$, we have
\begin{eqnarray*}
\| {Term}_4^w \|_{L^\infty H^1 \cap L^4 L^\infty}  & = & \| \int_0^t e^{iH(t-s)}  e^{i \Omega_0 (t-s)} e^{i \int_s^{t} (\tilde{A}^2 + 3 \tilde{\alpha}^2 + \tilde{\beta}^2) (s') ds'} P_c  \tilde{A}^2 \psi_0^2 w ds \|_{L^\infty H^1 \cap L^4 L^\infty} \\
&\le & \dd^{\gamma}\ \left\| \psi_0^2 w\right\|_{L^\frac{4}{3}_tW^{1,1}} \leq  \dd^{ - \frac38 + \frac{5 \gamma}{8} - \frac{3 \epsilon}{4} }   \| w \|_{L^\infty H^1} \le 
 \dd^{\frac{1}{2}+\delta_1},\ \ \ (\gamma > \frac35),
\end{eqnarray*}
which follows from previous constraints on $\gamma$.
Furthermore, 
\begin{eqnarray*}
\| {Term}_4^{w_1} - {Term}_4^{w_2} \|_{L^\infty H^1 \cap L^4 L^\infty}  & \leq & \dd^{\frac58 + \frac{\gamma}{8} - \frac{3 \epsilon}{4} } \| w_1 - w_2 \|_{L^\infty H^1} .
\end{eqnarray*}

For ${Term}_5^w$, we have
\begin{eqnarray*}
\| {Term}_5^w \|_{L^\infty H^1 \cap L^4 L^\infty} & = & \| \int_0^t e^{iH(t-s)}  e^{i \Omega_0 (t-s)} e^{i \int_s^{t} (\tilde{A}^2 + 3 \tilde{\alpha}^2 + \tilde{\beta}^2) (s') ds'} P_c w^3 ds \|_{L^\infty H^1 \cap L^4 L^\infty} \\ 
& \leq & \dd^{-\frac12 - \frac{\gamma}{2} - \epsilon} \| w \|_{L^\infty_t H^1_x}^3 \\
& \leq & \dd^{\frac{1-\gamma}2 - \epsilon +2 \delta_1} \| w \|_{L^\infty H^1} \le \ \dd^{\frac{1}{2}+\delta_1},
\end{eqnarray*}
which follows easily by previous constraints on $\gamma$.  
Moreover,
\begin{eqnarray*}
\| {Term}_5^{w_1} - {Term}_5^{w_2} \|_{L^\infty H^1 \cap L^4 L^\infty}  & \leq &\dd^{\frac{1-\gamma}2 - \epsilon +2 \delta_1} \| w_1 - w_2 \|_{L^\infty H^1} .
\end{eqnarray*}

For ${Term}_6^w$, we have
\begin{eqnarray*}
\| {Term}_6^w \|_{L^\infty H^1 \cap L^4 L^\infty} & = & \| \int_0^t
e^{iH(t-s)}  e^{i \Omega_0 (t-s)} e^{i \int_s^{t} (\tilde{A}^2 + 3
  \tilde{\alpha}^2 + \tilde{\beta}^2) (s') ds'} 2 P_c \tilde{A} \eta w ds \|_{L^\infty H^1 \cap L^4 L^\infty} \\ 
& \leq & \dd^{\frac{\gamma}{2} - \frac{1}{2} - \frac{\gamma}{2} - \epsilon } \| \eta \|_{L^\infty} \| w \|_{L^\infty H^1} \le \dd^{\delta_1 - \epsilon} \| w \|_{L^\infty H^1},
\end{eqnarray*}
which follows easily from previous constraints on $\delta_1$.  
Hence, it follows directly that
\begin{eqnarray*}
\| {Term}_6^{w_1} - {Term}_6^{w_2} \|_{L^\infty H^1 \cap L^4 L^\infty}  & \leq &  \dd^{\delta_1 - \epsilon}   \| w_1 - w_2 \|_{L^\infty H^1} .
\end{eqnarray*}

For ${Term}_7^w$, we have from Lemma \ref{lem:tildeRstr}
\begin{eqnarray*}
\| {Term}_7^w \|_{L^\infty H^1 \cap L^4 L^\infty} & = & \| \int_0^t e^{iH(t-s)}  e^{i \Omega_0 (t-s)} e^{i \int_s^{t} (\tilde{A}^2 + 3 \tilde{\alpha}^2 + \tilde{\beta}^2) (s') ds'} P_c \tilde{A} \langle \chi, w \rangle \tilde{R} ds \|_{L^\infty H^1 \cap L^4 L^\infty} \\ 
& \leq & \dd^{\frac{\gamma}{2} -\frac{1}{2} - \frac{\gamma}{2} - \epsilon}  \| \tilde{R} \|_{L^\infty_t H^1_x} \| w \|_{L^\infty H^1} \\
& \leq & \dd^{\frac{9 \gamma}{8} - \frac78 - \frac74 \epsilon} \| w \|_{L^\infty H^1} \le  \dd^{\frac{1}{2}+\delta_1}, \ \ \ (\gamma > \frac79),
\end{eqnarray*}
which follows from previous constraints on $\gamma$.
It follows directly that
\begin{eqnarray*}
\| {Term}_7^{w_1} - {Term}_7^{w_2} \|_{L^\infty H^1 \cap L^4 L^\infty}  & \leq & \dd^{\frac{9 \gamma}{8} - \frac78 - \frac74 \epsilon}  \| w_1 - w_2 \|_{L^\infty H^1} .
\end{eqnarray*}
The bounds for ${Term}_8^w$ follows in a similar manner.

For ${Term}_9^w$ and ${Term}_{10}^w$, we have
\begin{eqnarray*}
\| {Term}_9^w \|_{L^\infty H^1 \cap L^4 L^\infty}  & = & \| \int_0^t e^{iH(t-s)}  e^{i \Omega_0 (t-s)} e^{i \int_s^{t} (\tilde{A}^2 + 3 \tilde{\alpha}^2 + \tilde{\beta}^2) (s') ds'} P_c \tilde{A}^2 \psi_0^2 \tilde{R} ds \|_{L^\infty H^1 \cap L^4 L^\infty} \\ 
& \leq & \dd^\gamma \| \psi_0^2 \tilde{R} \|_{L^1 H^1} \leq  \dd^{\frac{13}{8} \gamma - \frac78 -\frac{11}{8} \epsilon} \le \  \dd^{\frac{1}{2}+\delta_1}
\end{eqnarray*}
and
\begin{eqnarray*}
\| {Term}_{10}^w \|_{L^\infty H^1 \cap L^4 L^\infty}  & = & \| \int_0^t e^{iH(t-s)}  e^{i \Omega_0 (t-s)} e^{i \int_s^{t} (\tilde{A}^2 + 3 \tilde{\alpha}^2 + \tilde{\beta}^2) (s') ds'} P_c \tilde{A} \langle \chi, \tilde{R} \rangle \tilde{R}ds \|_{L^\infty H^1 \cap L^4 L^\infty} \\ 
& \leq &  \dd^{\gamma} \| \tilde{R} \|_{L^1_t H^1_x} \| \tilde{R} \|_{L^\infty_{x,t}}  \leq \dd^{2 \gamma + 1 - \frac38 \gamma - \frac38 - \frac34 \epsilon  } \le \dd^{\frac{1}{2}+\delta_1}.
\end{eqnarray*}
Both of these terms are independent of $(\eta,w)$, meaning the contraction mapping follows easily.  

Hence, choosing $\gamma$, $\delta_1$, and $\epsilon$ such that the constraints \eqref{constraint-1},\eqref{constraint-2},\eqref{constraint-3},\eqref{constraint-4},\eqref{constraint-5} are satisfied, the contraction argument follows and the result holds.

Once we have solved for $(\vec{\sigma},R)$, it is clear that the resulting function $\theta \in C (I)$ by construction.  
\end{proof}

\section{Numerical Simulations}
\label{sec:num}

\subsection{Polar Coordinates}
\label{sec:radco}

As it will simplify the process of building initial conditions for numerically solving \eqref{eqn:nlsdwp} that display the behaviors we study above, let us discuss here an alternative set of coordinates for \eqref{eqn:sys-fd}.  Namely, we set $\rho_0 = r_0 e^{i \theta_0}$ and $\rho_1 = r_1 e^{i \theta_1}$.  This leads to the following system of ODE's:
\begin{eqnarray}
\label{eqn:sys-fd-pol}
\left\{ \begin{array}{c}
\dot{r_0} = r_1^2 r_0 \sin( 2 \Delta \theta), \\
\dot{r_1} = - r_0^2 r_1 \sin (2 \Delta \theta), \\
\dot{ (\Delta \theta) } = \Omega_1 - \Omega_0 + (r_1^2 - r_0^2)(1+ \cos( 2 \Delta \theta)),
\end{array} \right.
\end{eqnarray}
where $\Delta \theta = \theta_1 - \theta_0$.  Given the system above, we can say that the bifurcation of stability occurs at 
\begin{eqnarray}
N_{cr}^{FD} = \frac{\Omega_1 - \Omega_0}{2}.
\end{eqnarray}
As we are interested in the behavior quite near the bifurcation point,
we define new parameters $\epsilon_0$, $\epsilon_1$ and $n$ such that
\begin{eqnarray}
r_0 & = & \sqrt{ N_{cr}^{FD} } + \epsilon_0, \\
r_1 & = & \epsilon_1, \\
N & = & N_{cr}^{FD} + n,
\end{eqnarray}
where
\begin{eqnarray}
n = \epsilon_0^2 + \epsilon_1^2 + 2 \sqrt{ N_{cr}^{FD} } \epsilon_0.
\end{eqnarray}

Then, we have
\begin{eqnarray}
\dot{\epsilon_0} & = & \epsilon_1^2 \left( \sqrt{ N_{cr}^{FD} } + \epsilon_0 \right) \sin( 2 \Delta \theta), \\ 
\dot{\epsilon_1} & = & - \epsilon_1 \left( \sqrt{ N_{cr}^{FD} } +
  \epsilon_0 \right)^2 \sin( 2 \Delta \theta) , \\
\dot{ (\Delta \theta) } & = & \Omega_1 - \Omega_0 + \left( \epsilon_1^2 -  \left( \sqrt{ N_{cr}^{FD} } + \epsilon_0 \right)^2 \right)(1+ \cos( 2 \Delta \theta)).
\end{eqnarray}

It should be noted, using the conservation laws we can write the system for $\epsilon_1$ and $\Delta \theta$ independently
\begin{eqnarray}
\label{eqn:radodesys}
\left\{ \begin{array}{c}
\dot{\epsilon_1} = - \epsilon_1 \left( N_{cr}^{FD} +n-\epsilon_1^2
\right) \sin( 2 \Delta \theta) , \\
\dot{ (\Delta \theta) } = \Omega_1 - \Omega_0 - \left(  \left( N_{cr}^{FD} +n- 2\epsilon_1^2  \right) \right)(1+ \cos( 2 \Delta \theta))
\end{array} \right.
\end{eqnarray}
and hence analyze phase plane diagrams, see Figures \ref{fig1} and
\ref{fig2}.  In particular, the behavior of $\Delta \theta$ in these
regions leads to interesting oscillatory behavior as shown in the
phase diagrams featured in Figures \ref{fig1} and \ref{fig2}.  For $n
> 0$, a simple calculation shows that the equilibrium solutions occur
for $\epsilon_1 = \sqrt{ \frac{n}{2} } $, $\Delta \theta = k \pi$ for
$k \in \mathbb{Z}$.  

\begin{figure}
\includegraphics[scale=0.25]{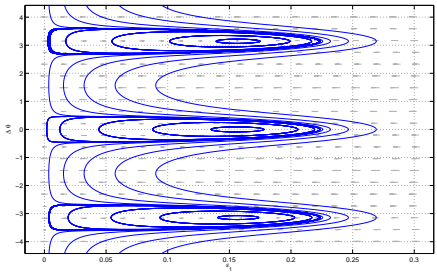}
\caption{A numerical plot of a phase plane with $\Delta \theta$
  plotted versus $\epsilon_1$ for $n =>0$.  Here, we have $N_{cr}^{FD}
  = .2$, $n = N - N_{cr}^{FD} = .05$.}
\label{fig1}
\end{figure} 

\begin{figure}
\includegraphics[scale=0.25]{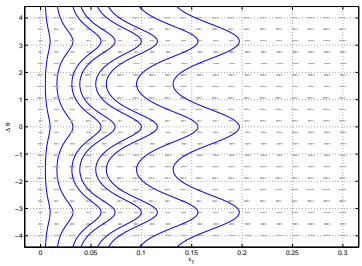}
\caption{A numerical plot of a phase plane with $\Delta \theta$
  plotted versus $\epsilon_1$ for $n < 0$.  Here, we have $N_{cr}^{FD}
  = .2$, $n = N - N_{cr}^{FD}= -.05$.}
\label{fig2}
\end{figure} 

For $n>0$, we have trapped orbits near the values $\Delta \theta = k \pi$ for $k \in \mathbb{Z}$.  This is a manifestation of the orbital stability of these mixed states.  However, oscillations between wells can be generated by a large enough phase shift to leave the region where such trapped orbits occur.  These oscillations are then large, in particular $\epsilon_1$ must reach some $\epsilon_{max}$ before decreasing.  

For $n<0$, we see the oscillations still attain a maximum at $\Delta \theta = k \pi$, however their amplitude approaches $0$ with the initial $\epsilon_1(0)$.  This is a manifestation of the stability of the symmetric state in this regime.  One may ask if such finite dimensional Hamiltonian dynamics would appear in the infinite dimension dynamics of the PDE.  For this, see Figures \ref{fig3}, \ref{fig4} and \ref{fig5} for numerical evidence of their existence for long times.

To begin, we locate the symmetry breaking point for a particular system.  To do this, we use spectral renormalization with an asymmetric initial function to find the symmetry breaking point, see Fig. \ref{fig8}. 

In the remainder of this section, we run simulations with wells of the form
\begin{eqnarray}
\label{eqn:ddw}
V_L = q (\delta(x-\frac{L}{2}) + \delta(x+\frac{L}{2})).
\end{eqnarray}
Similar results hold and the same numerical analysis tools are applicable for potentials with more regularity.  Knowing the bifurcation point as discussed in \cite{JW}, we can numerically integrate using a finite element method similar to that in \cite{HMZ1}, where scattering of soliton solutions across single delta function potentials was analyzed (see \cite{ADKM} for analysis of finite element methods for nonlinear Schr\"odinger equations without potential).  It is quite simple to adapt the method presented there to allow for a double-well potential (for delta functions or smoother potentials).  The initial data is generated by finding the lowest entries of the spectrum of the discretized representation of $H = -\Delta + V$ in the Galerkin approximation, which is an operation embedded in many numerical software programs.  For simplicity, we use the $eig$ function from {\it Matlab}.  Then, we may numerically solve the PDE system \eqref{eqn:nlsdwp} with initial data corresponding to that necessary for the three types of oscillation described in Section \ref{sec:finite-dim}.  Note, one could also use the solitons from the spectral renormalization code (see \cite{AM}) as initial data quite easily, however these represent true nonlinear structures and we wish to observe structures derived from the finite dimensional dynamics, which we only expect to persist on finite time scales due to the nonlinear structure.  The orbital stability of the nonlinear objects is an interesting question in its own right and was explored in \cite{KKSW}.  The equilibrium point of our dynamical system is in fact the finite dimensional part of a soliton solution, so there is no question the orbital stability of the soliton and the long time existence of oscillations near an equilibrium point are related. The phase plane diagrams for the finite dimensional dynamics are plotted using the MATLAB software program {\it pplane7} \cite{AP}.

\begin{figure}
\begin{center}
\includegraphics[scale=0.35]{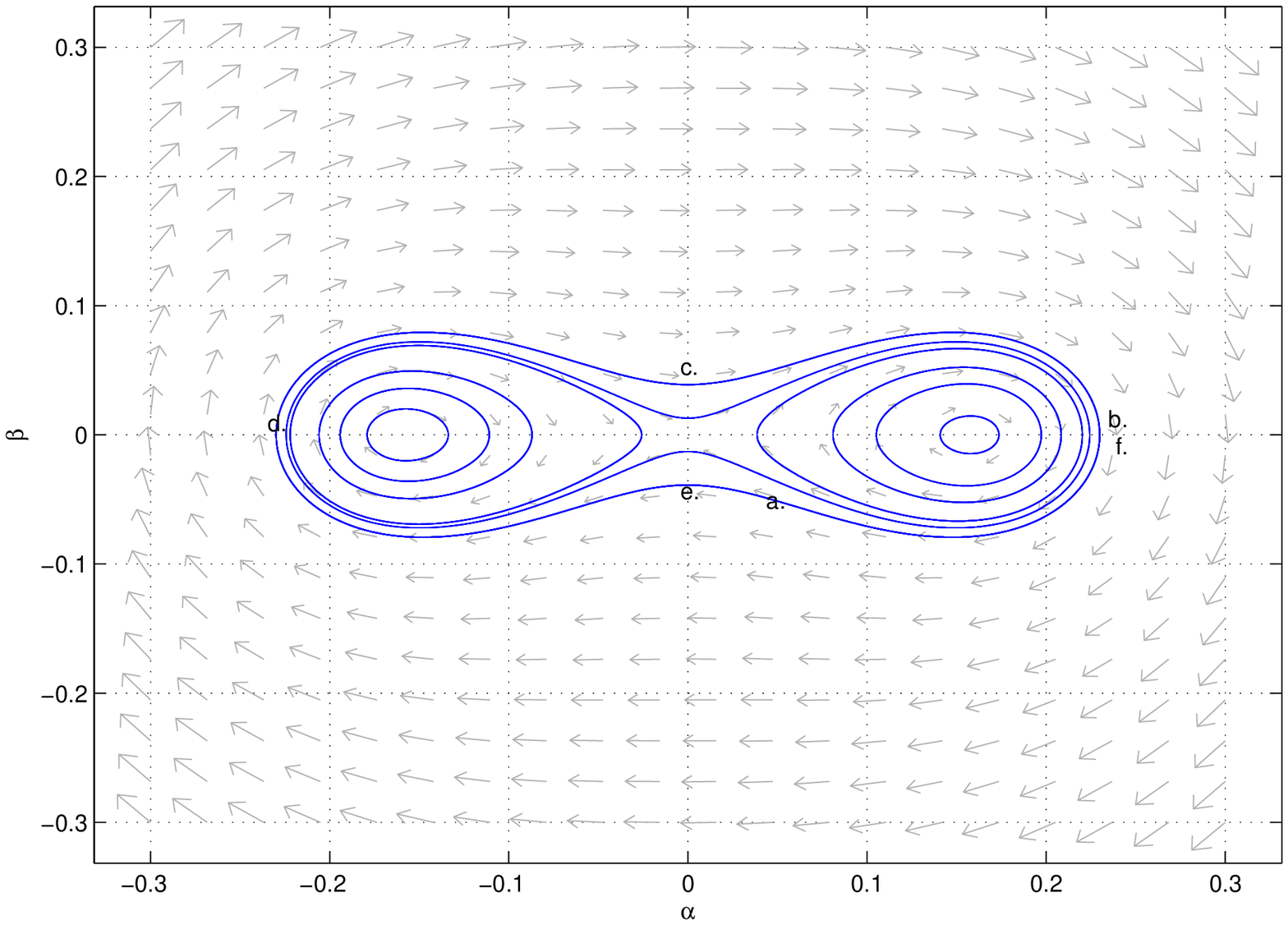}
\end{center}   
\includegraphics[scale=0.2]{dwpngeq01a} 
\includegraphics[scale=0.2]{dwpngeq01b} \\
\includegraphics[scale=0.2]{dwpngeq01c}  
\includegraphics[scale=0.2]{dwpngeq01d} \\
\includegraphics[scale=0.2]{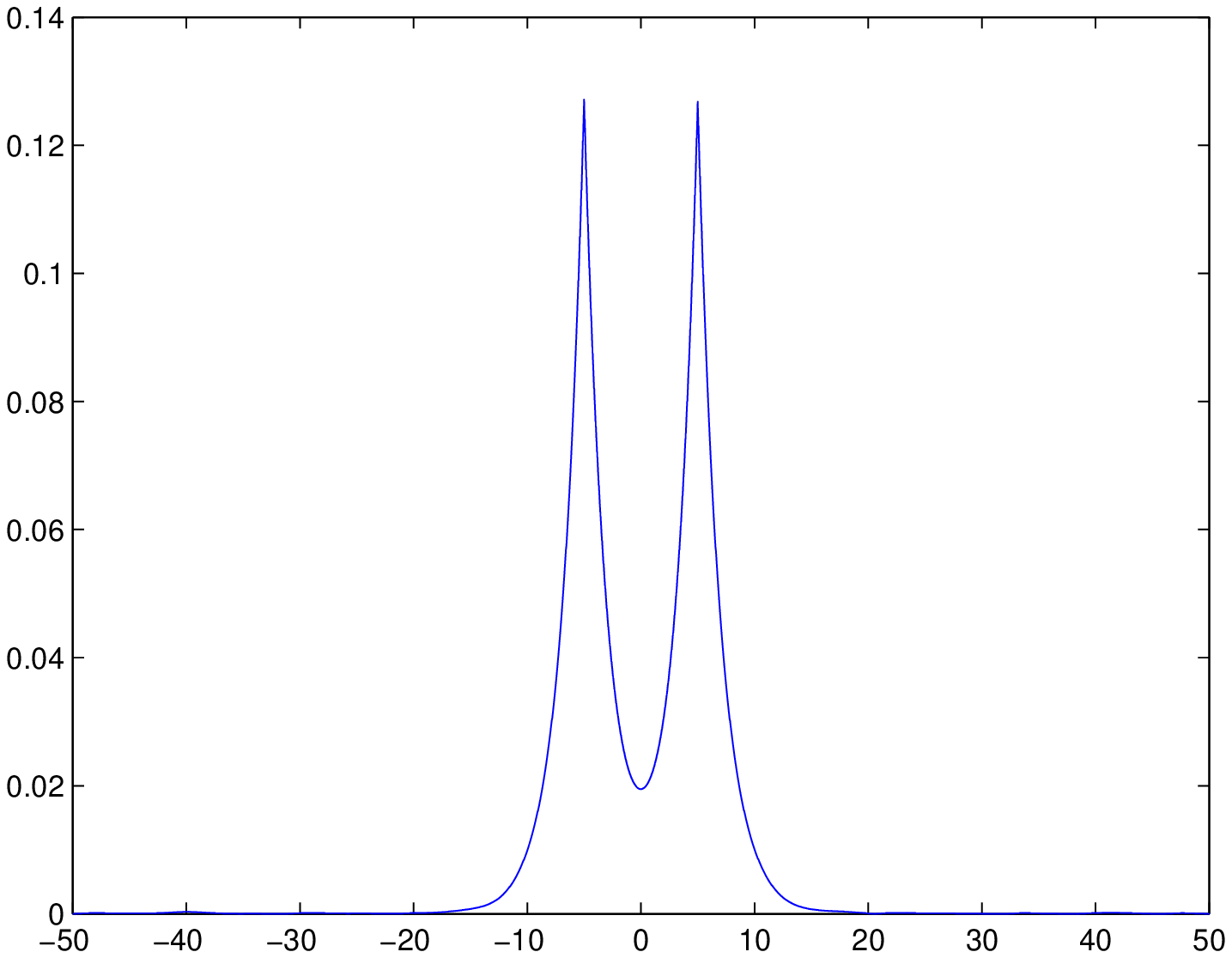}   
\includegraphics[scale=0.2]{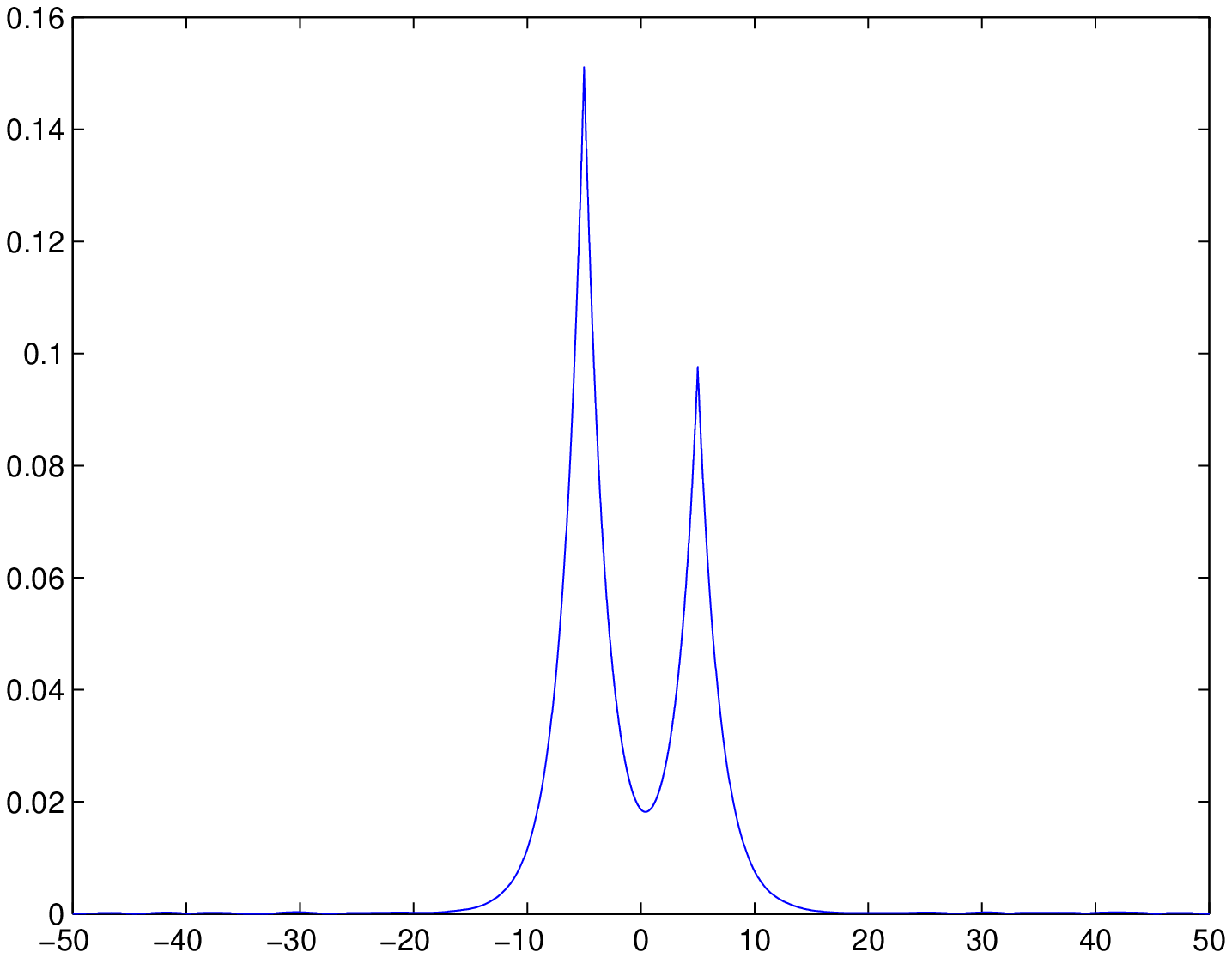}
\caption{At top, a numerical plot of the phase plane diagram for $n =
  N-N_{cr}^{FD}>0$ with specific points chosen along a closed orbit
  which shows full oscillation of mass from one well to another.  Below numerical plots of the absolute value of the solution to Equation \eqref{eqn:nlsdwp} at various times with initial data such that $n = N-N_{cr}^{FD} > 0$ and $\Delta \theta (0) = 1$.  The plots correspond to points a, b, c, d, e, and f respectively from the specified orbit. }
\label{fig3}
\end{figure}

\begin{figure}
\begin{center}
\includegraphics[scale=0.35]{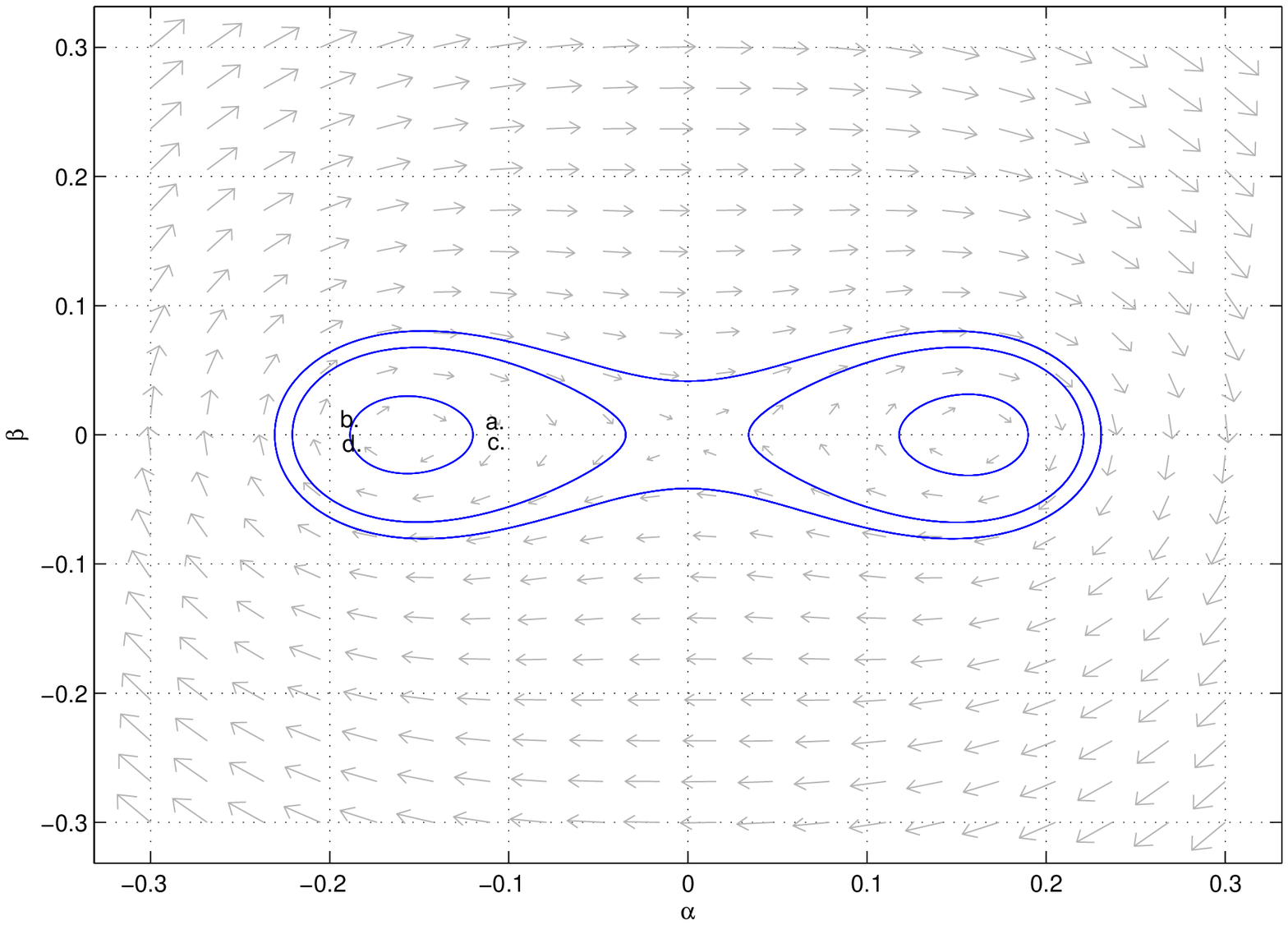}
\end{center}
\includegraphics[scale=0.2]{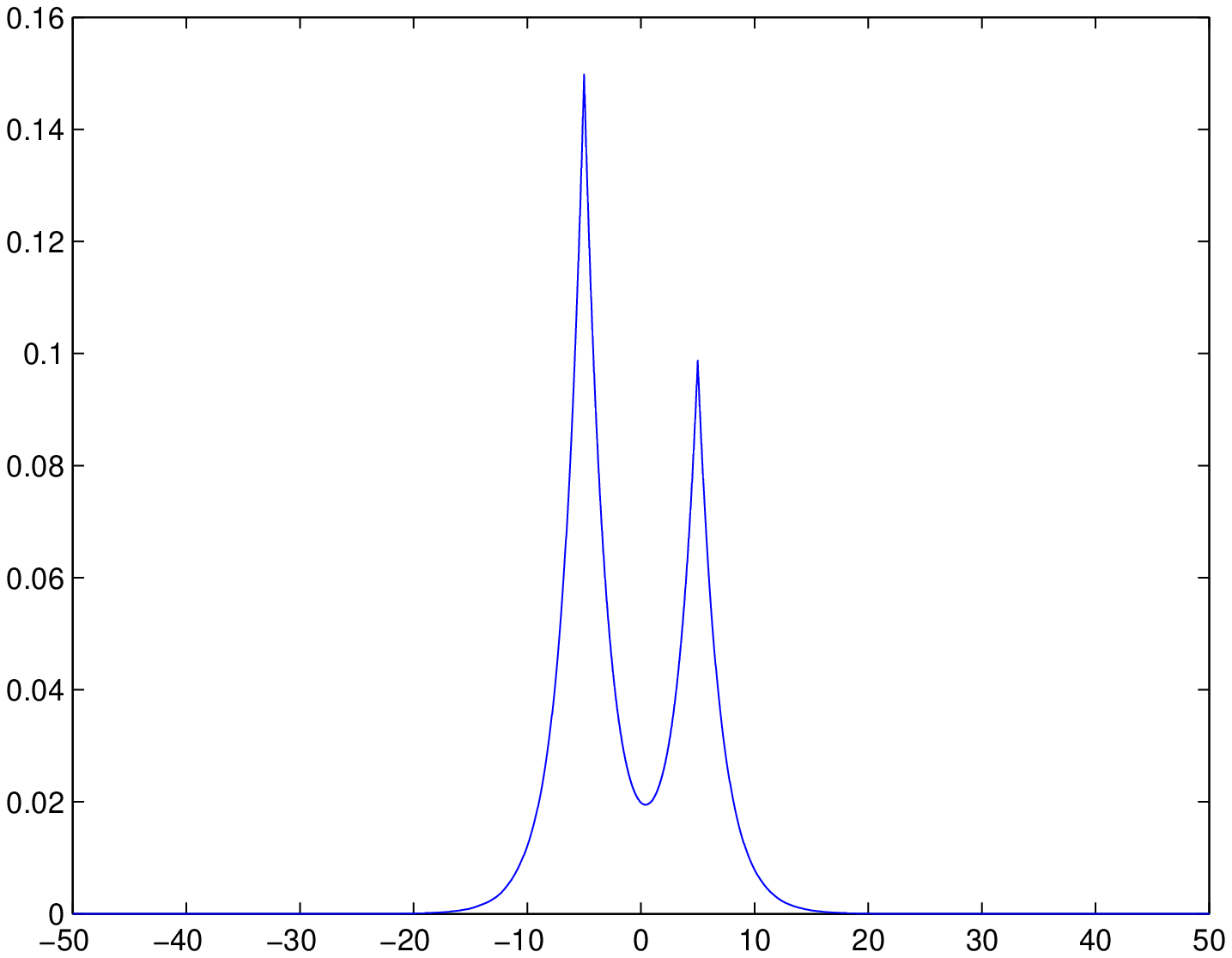} 
\includegraphics[scale=0.2]{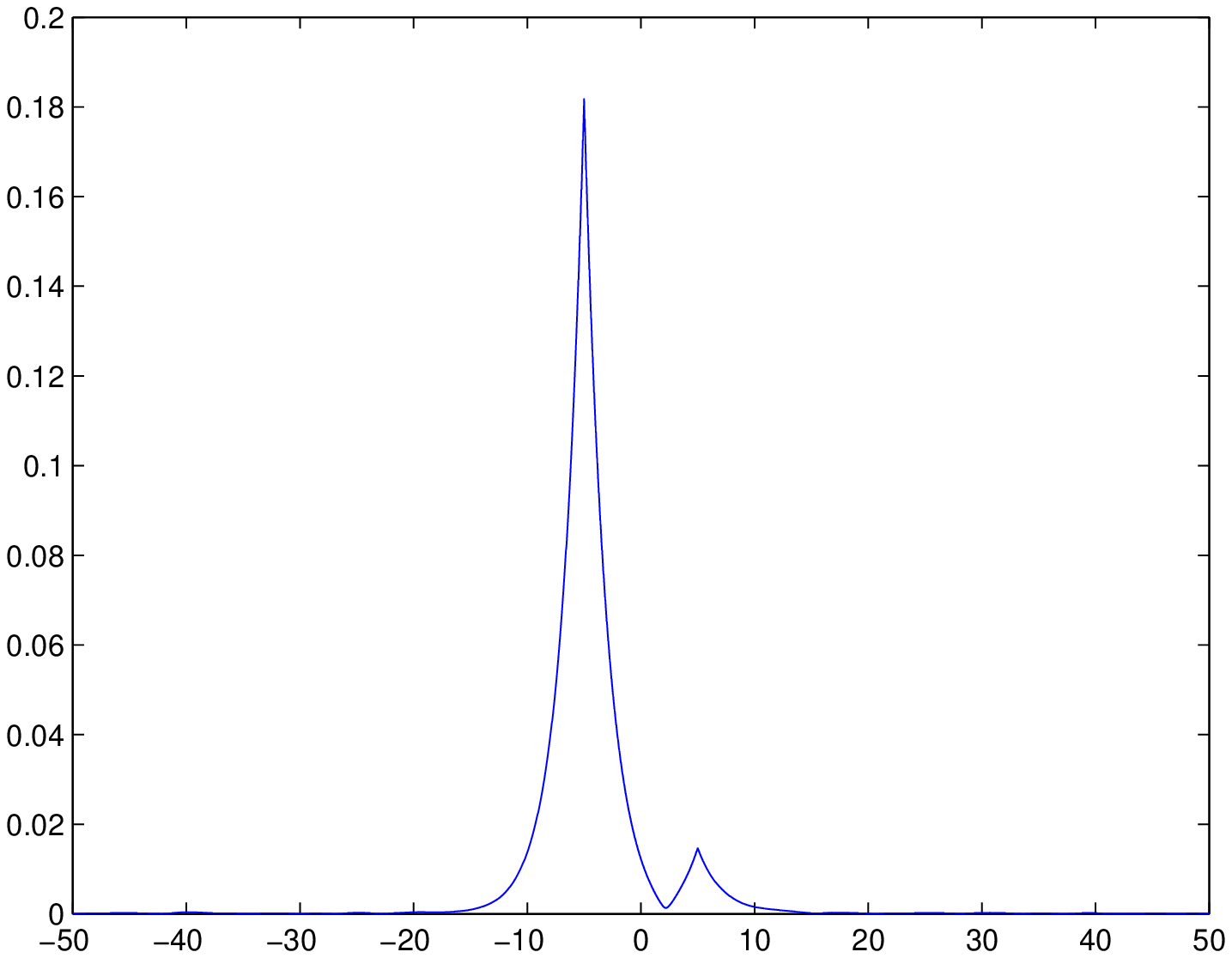} \\
\includegraphics[scale=0.2]{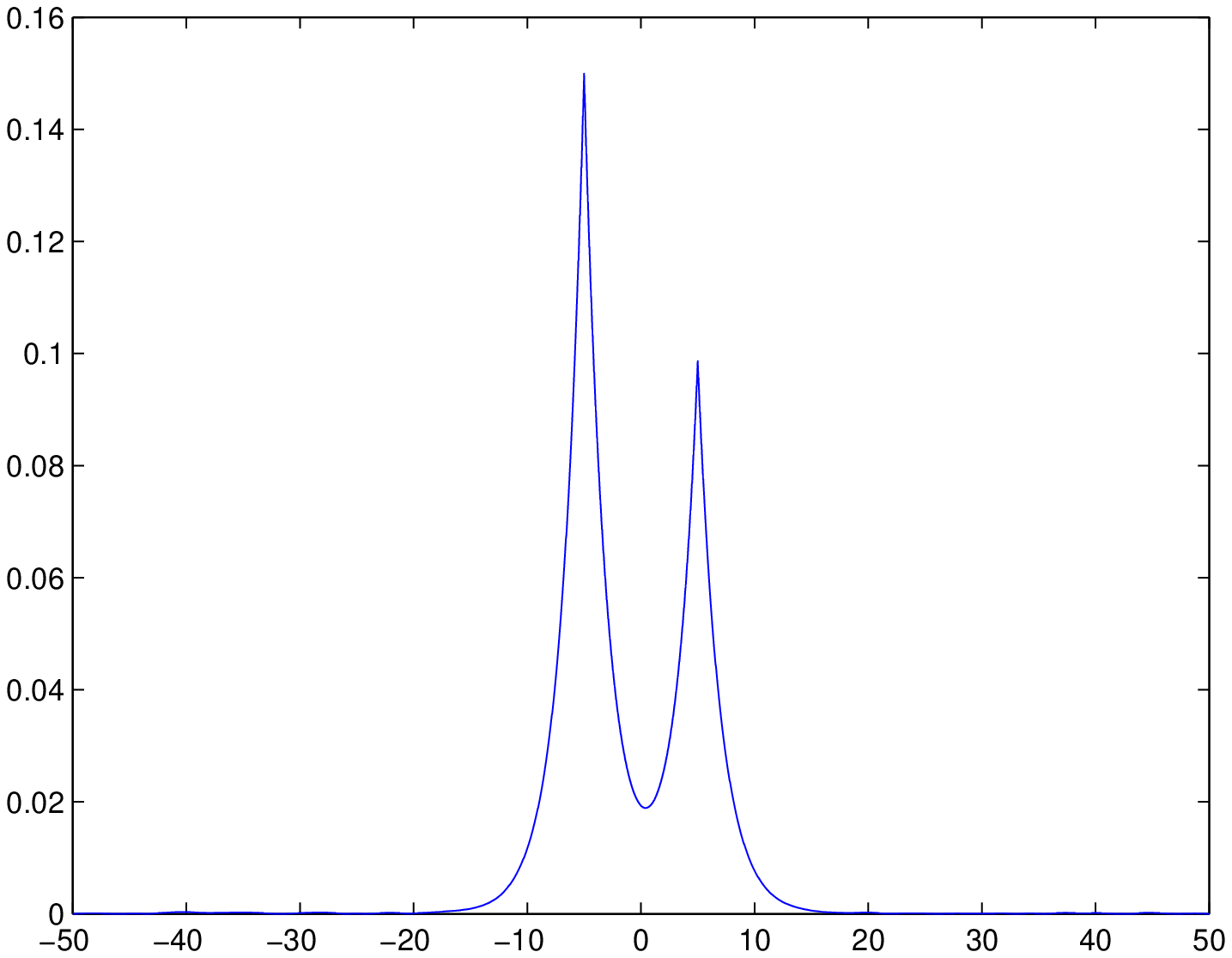} 
\includegraphics[scale=0.2]{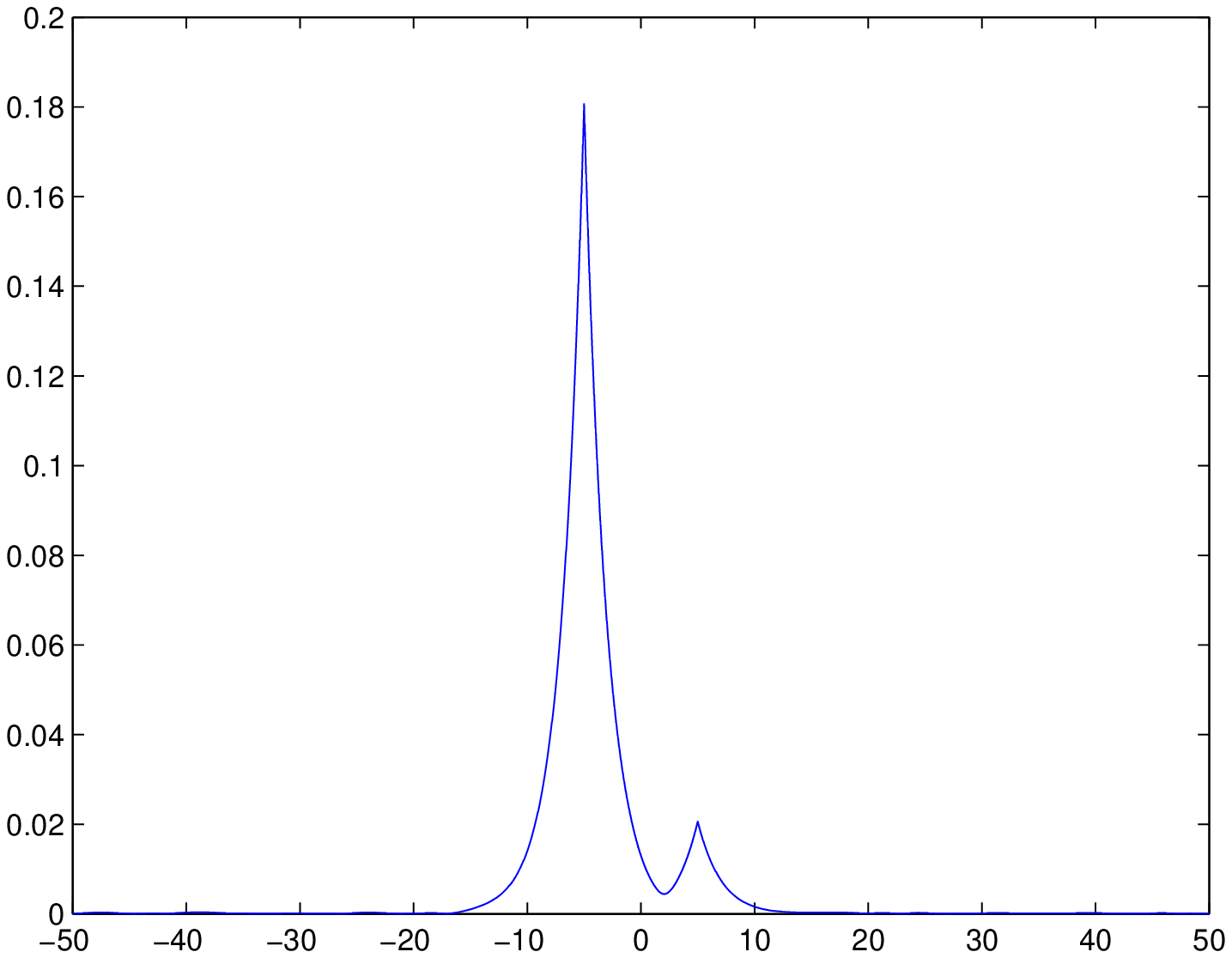}

\caption{At top, a numerical plot of the phase plane diagram for $n = N-N_{cr}^{FD}>0$ with specific points chosen along a closed orbit which shows localization of the mass on one side of the well. Below, numerical plots of the absolute value of the solution to Equation \eqref{eqn:nlsdwp} at various times with initial data such that $n = N-N_{cr}^{FD} > 0$ and $\Delta \theta (0) = 0$.  The plots correspond to points a, b, c, d respectively from the specified orbit. }
\label{fig4}
\end{figure}

\begin{figure}
\begin{center}
\includegraphics[scale=0.35]{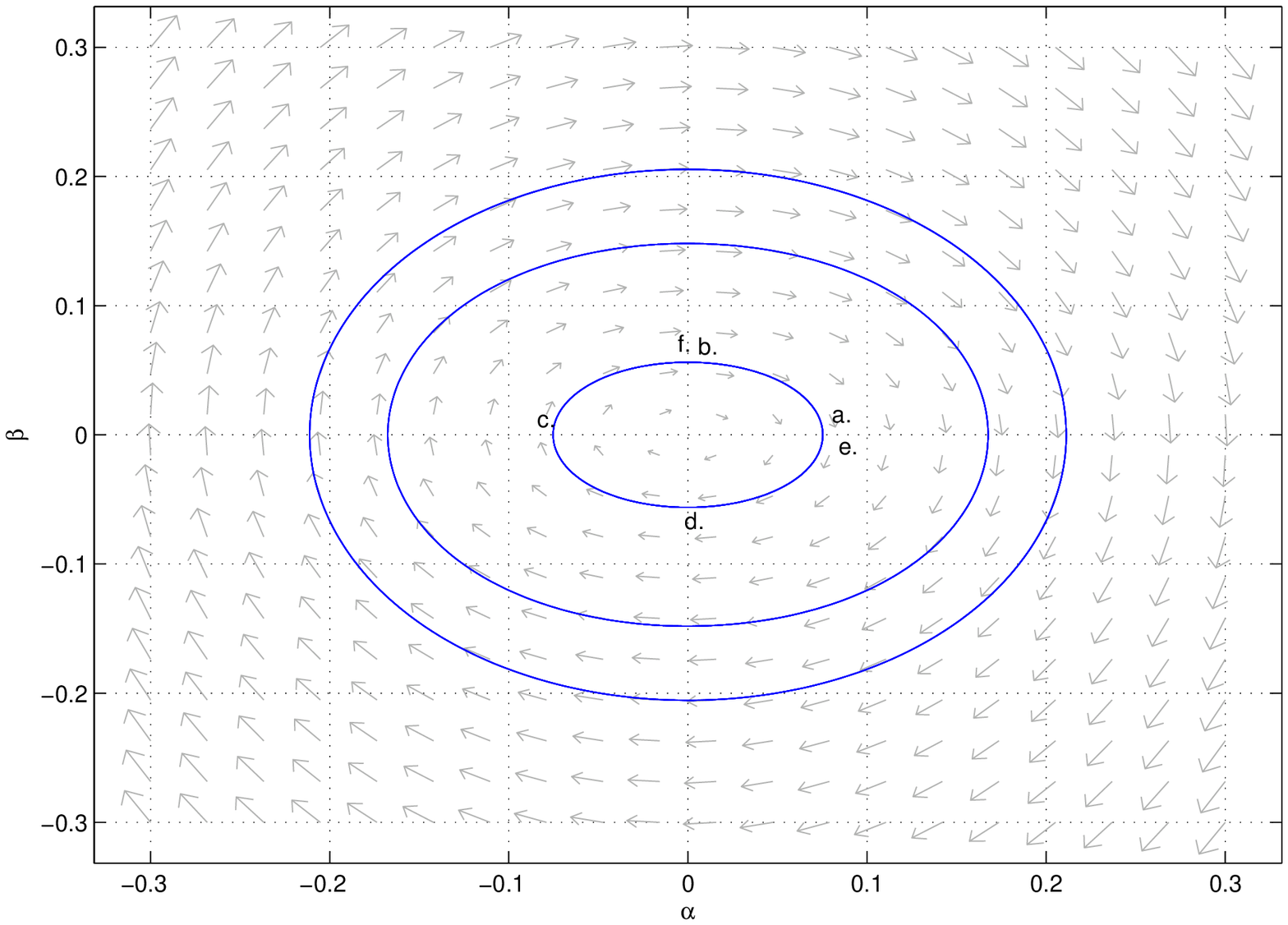}
\end{center}
\includegraphics[scale=0.2]{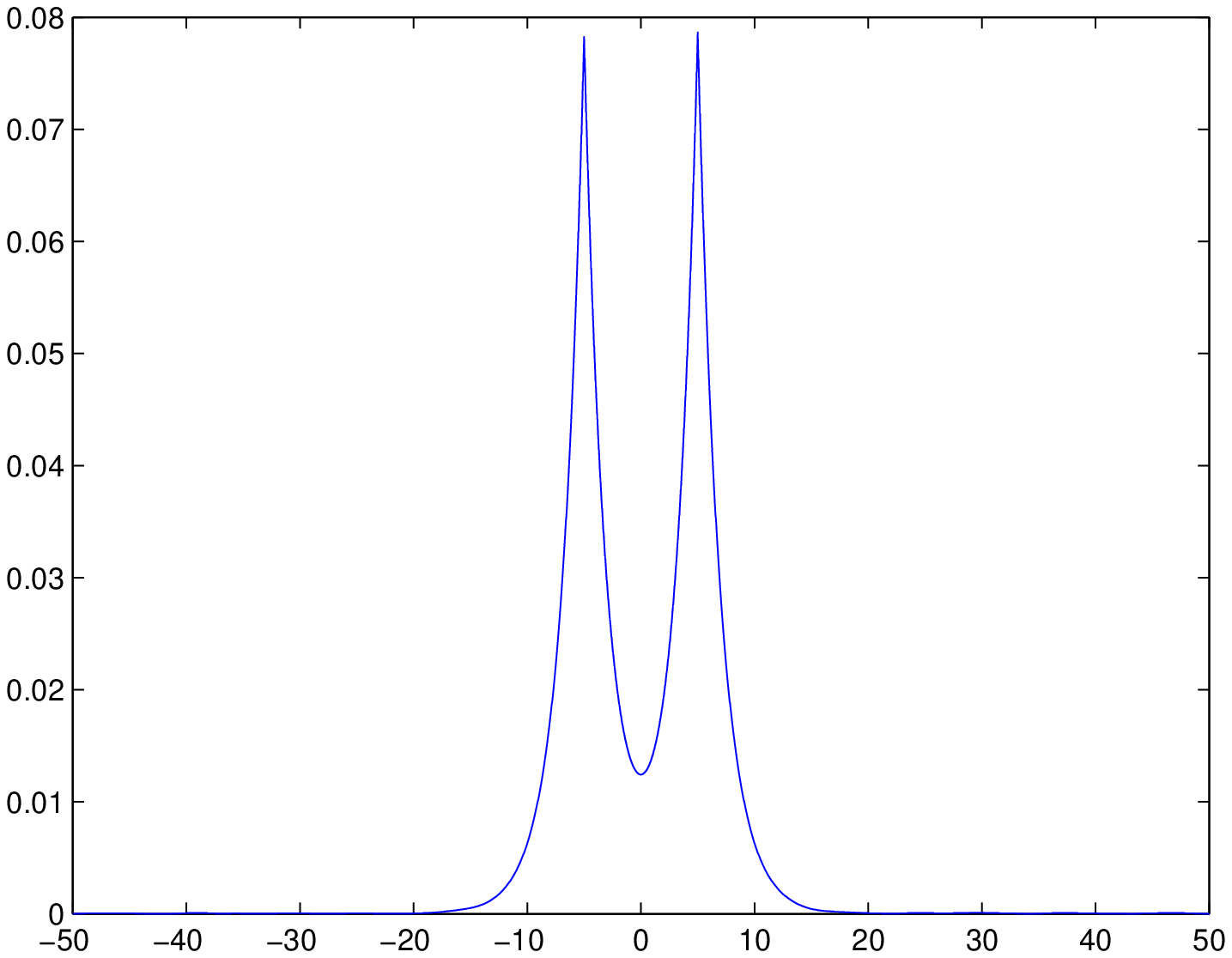} 
\includegraphics[scale=0.2]{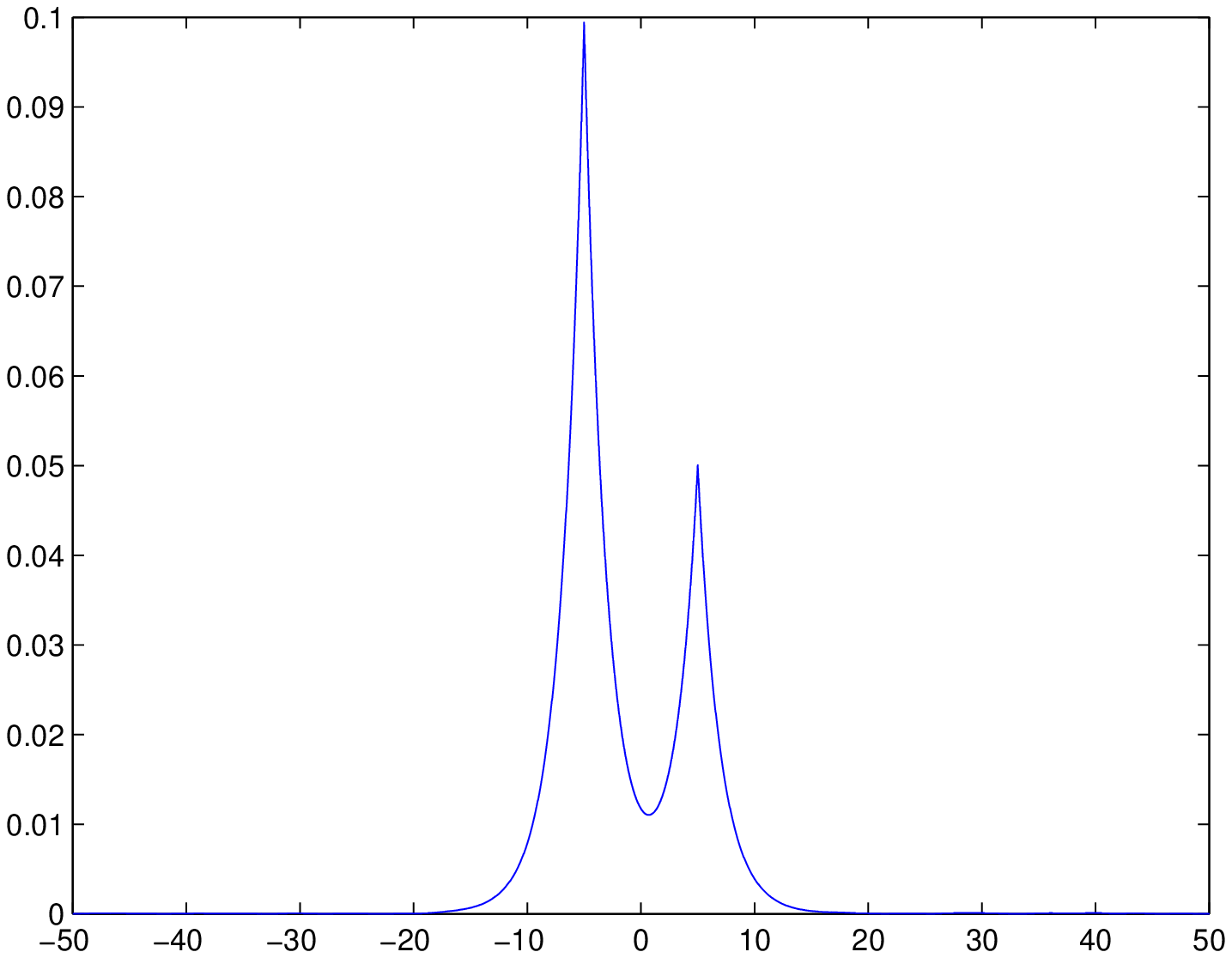} \\
\includegraphics[scale=0.2]{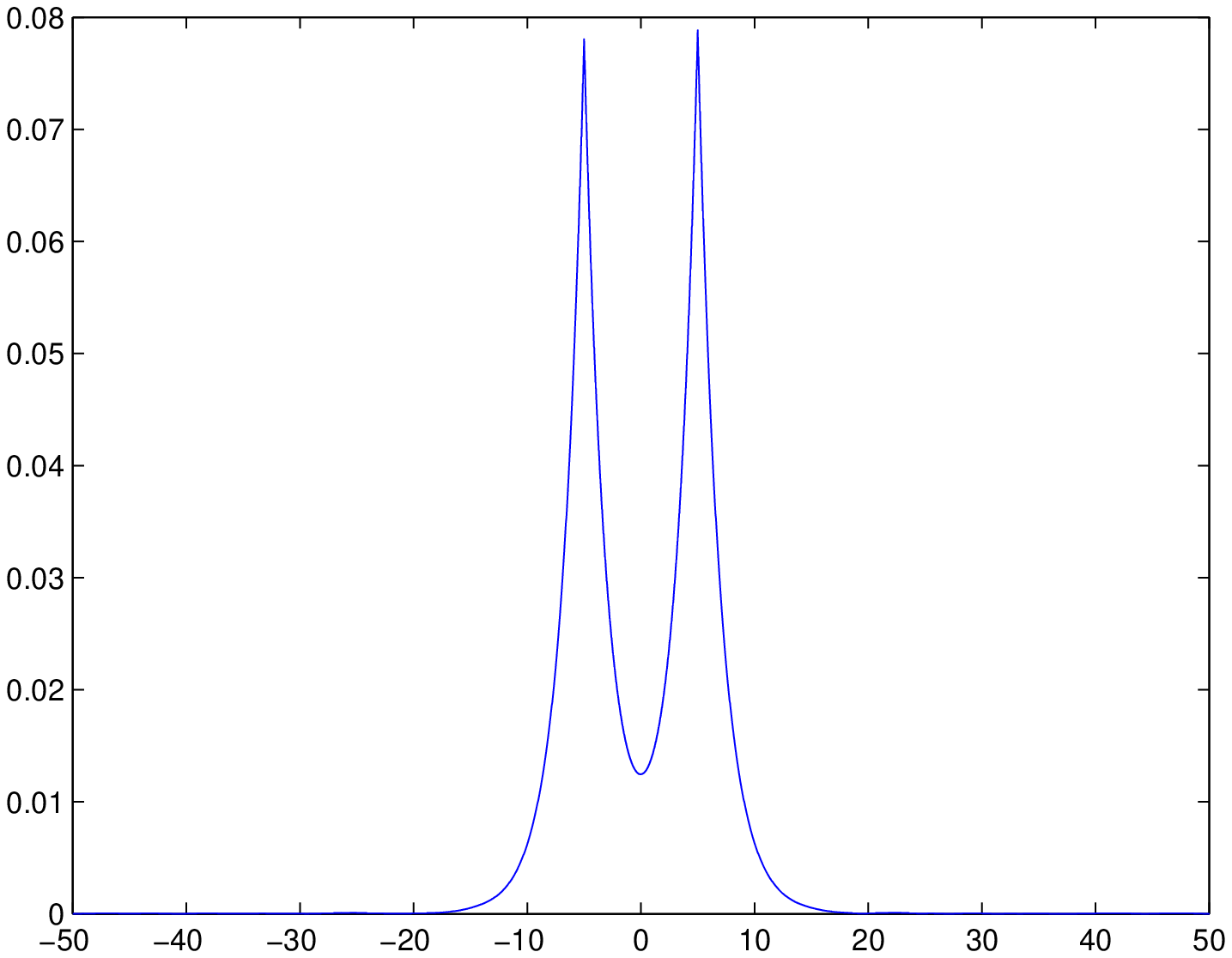} 
\includegraphics[scale=0.2]{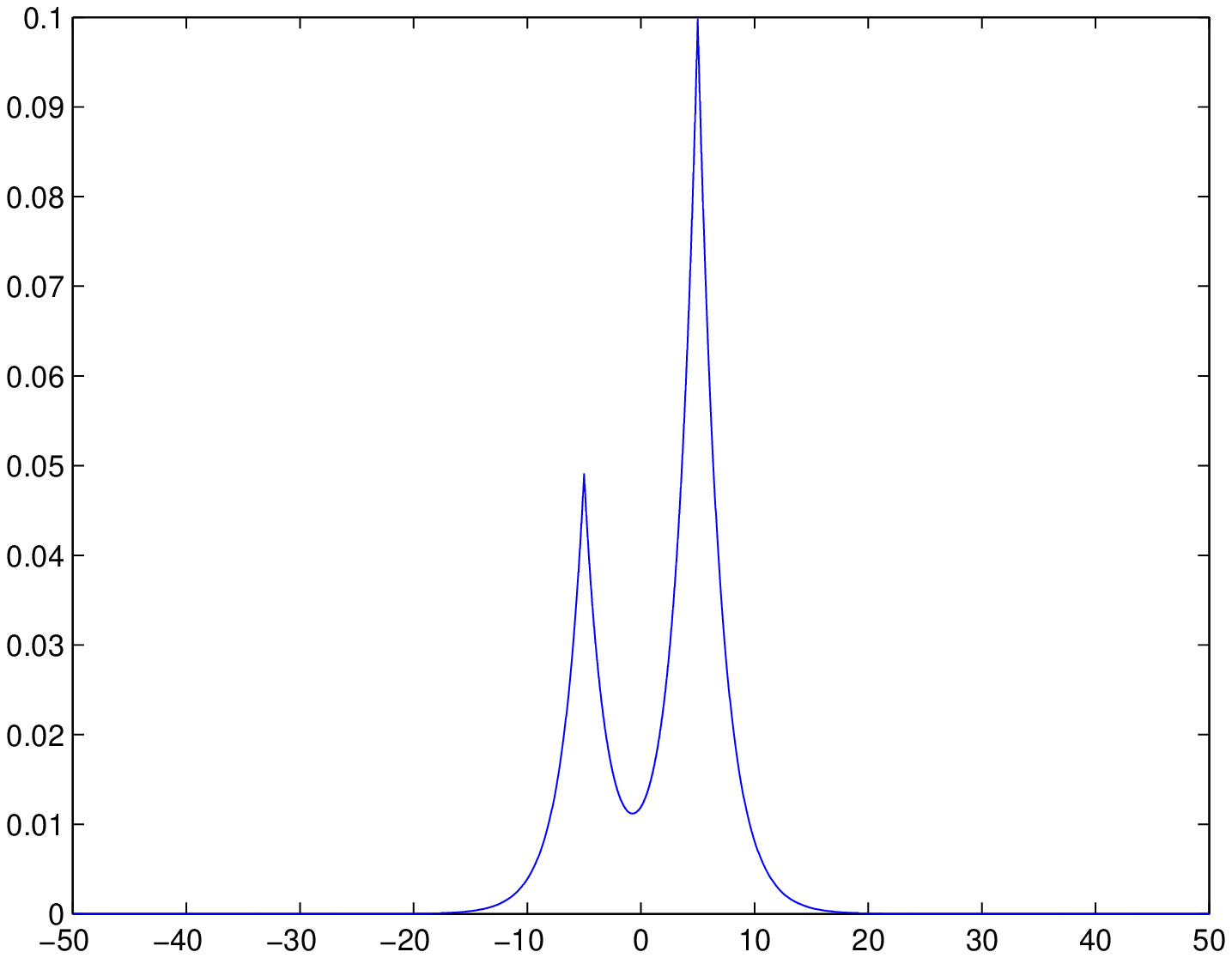} \\
\includegraphics[scale=0.2]{dwpnleq0b} 
\includegraphics[scale=0.2]{dwpnleq0c}

\caption{At top, a numerical plot of the phase plane diagram for $n =
  N-N_{cr}^{FD}<0$ with specific points chosen along a closed
  orbit. Below, numerical plots of  the absolute value of the solution to Equation \eqref{eqn:nlsdwp} at various times with initial data such that $n = N-N_{cr}^{FD} < 0$ and $\Delta \theta (0) = 0$.  The plots correspond to points a, b, c, d, e, and f respectively from the specified orbit.}
\label{fig5}
\end{figure}

Finally, by taking a system such that $|\Omega_0 - \Omega_1|$ is comparably large (or well-separation distance, $L$, comparably close but sufficiently large to guarantee the hypothesized discrete spectrum), we can observe for large perturbations coupling to the continuous spectrum and hence decay of a fully oscillatory solution to a ground state in a finite time.  In general, the mass dispersion is rather rapid, hence after a prescribed number of time steps determined by the computational domain, we cut off the solution near the origin and continue solving with the cut-off initial data.  See Figures \ref{fig9}, \ref{fig10}, \ref{fig11} and \ref{fig12} for various computed time evolutions of the (radiation) damped oscillations observed in such a system.

\begin{figure}
\includegraphics[scale=0.25]{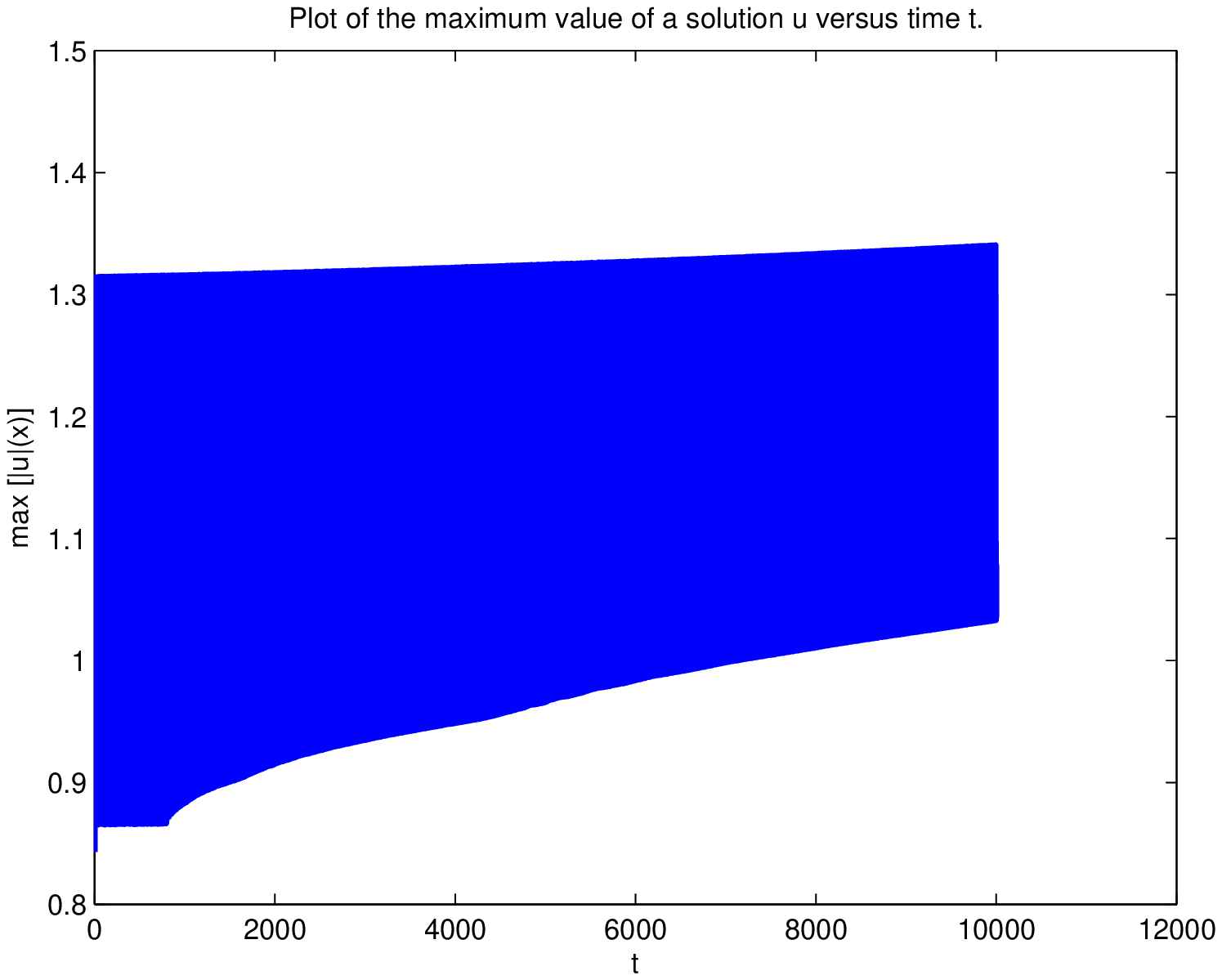} 
\includegraphics[scale=0.25]{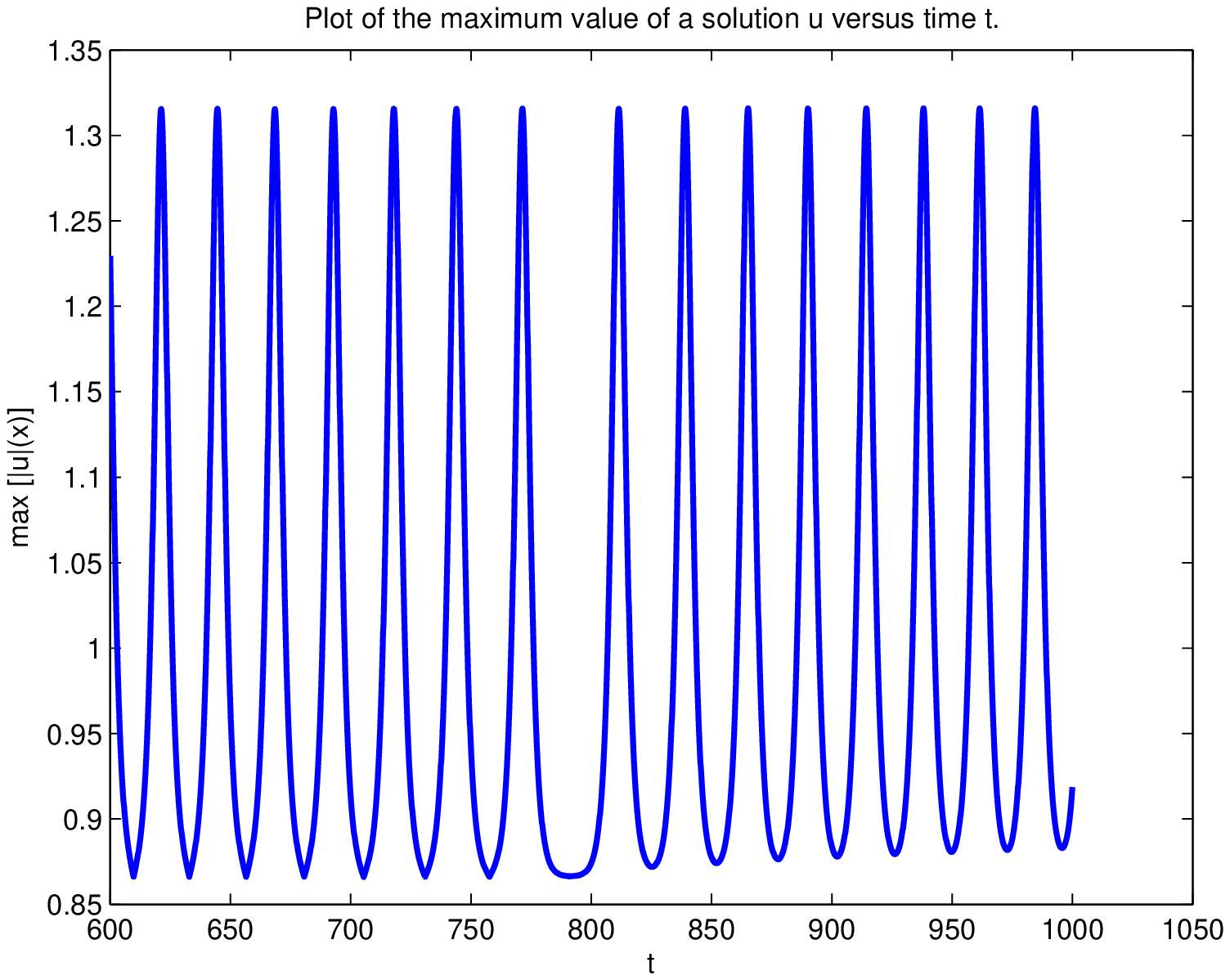}
\caption{A numerical plot of the maximum amplitude of an oscillatory solution decaying to a ground state and in particular the transition region blown up to capture the fast oscillations.}
\label{fig9}
\end{figure}

\begin{figure}
\includegraphics[scale=0.25]{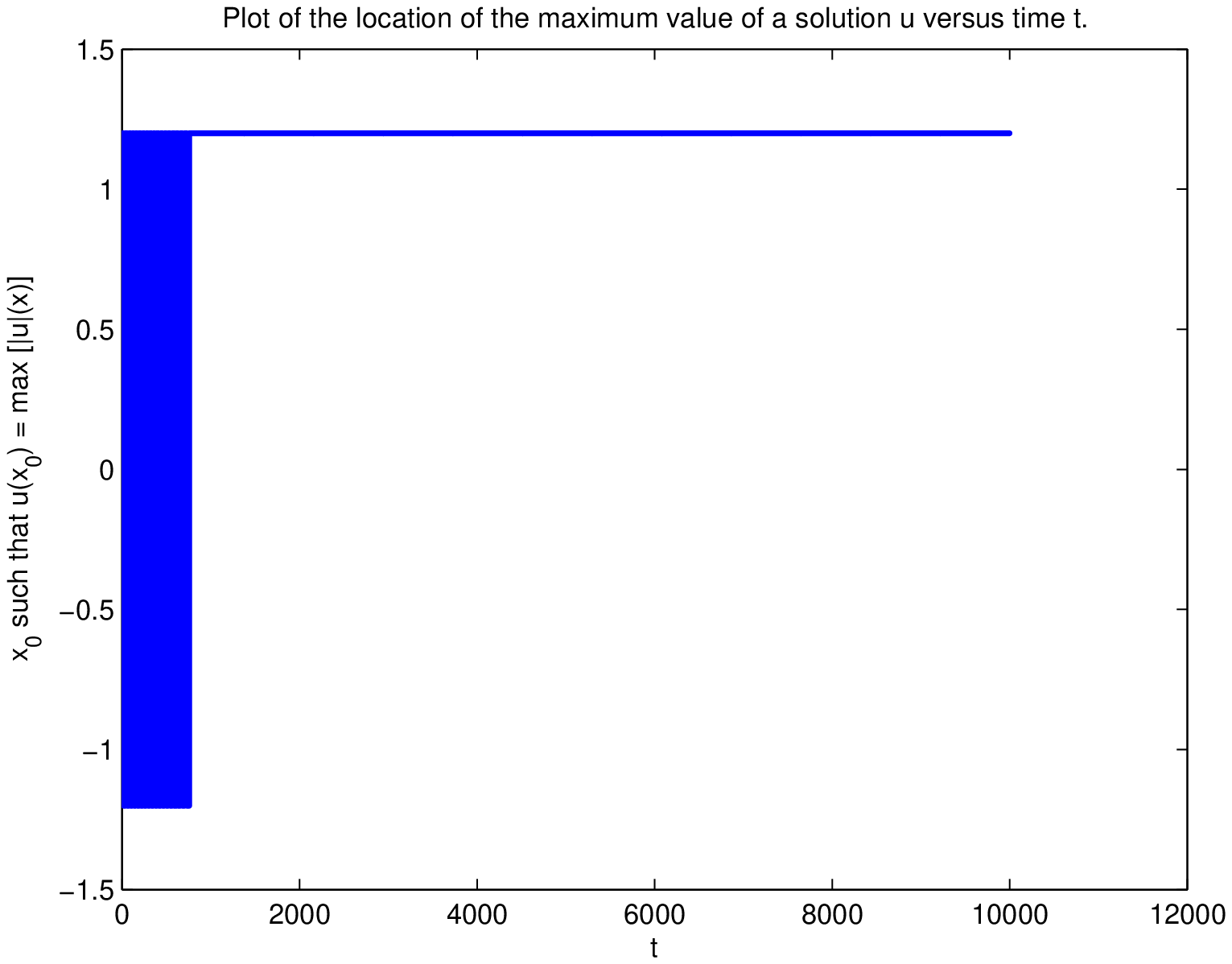} 
\includegraphics[scale=0.25]{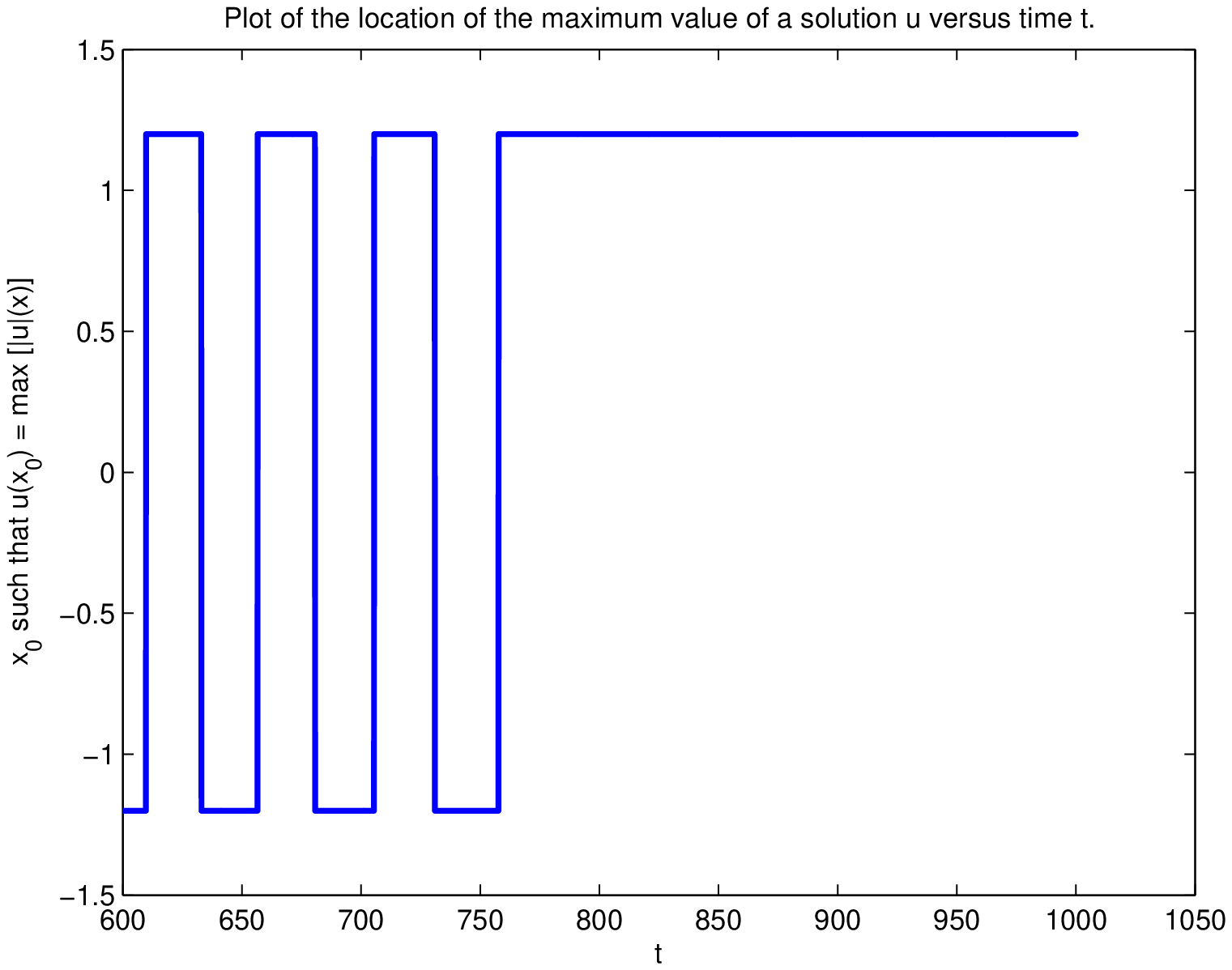}
\caption{A numerical plot of the location of maximum amplitude of an oscillatory solution decaying to a ground state and in particular the transition region blown up to capture the fast oscillations.}
\label{fig10}
\end{figure}

\begin{figure}
\includegraphics[scale=0.25]{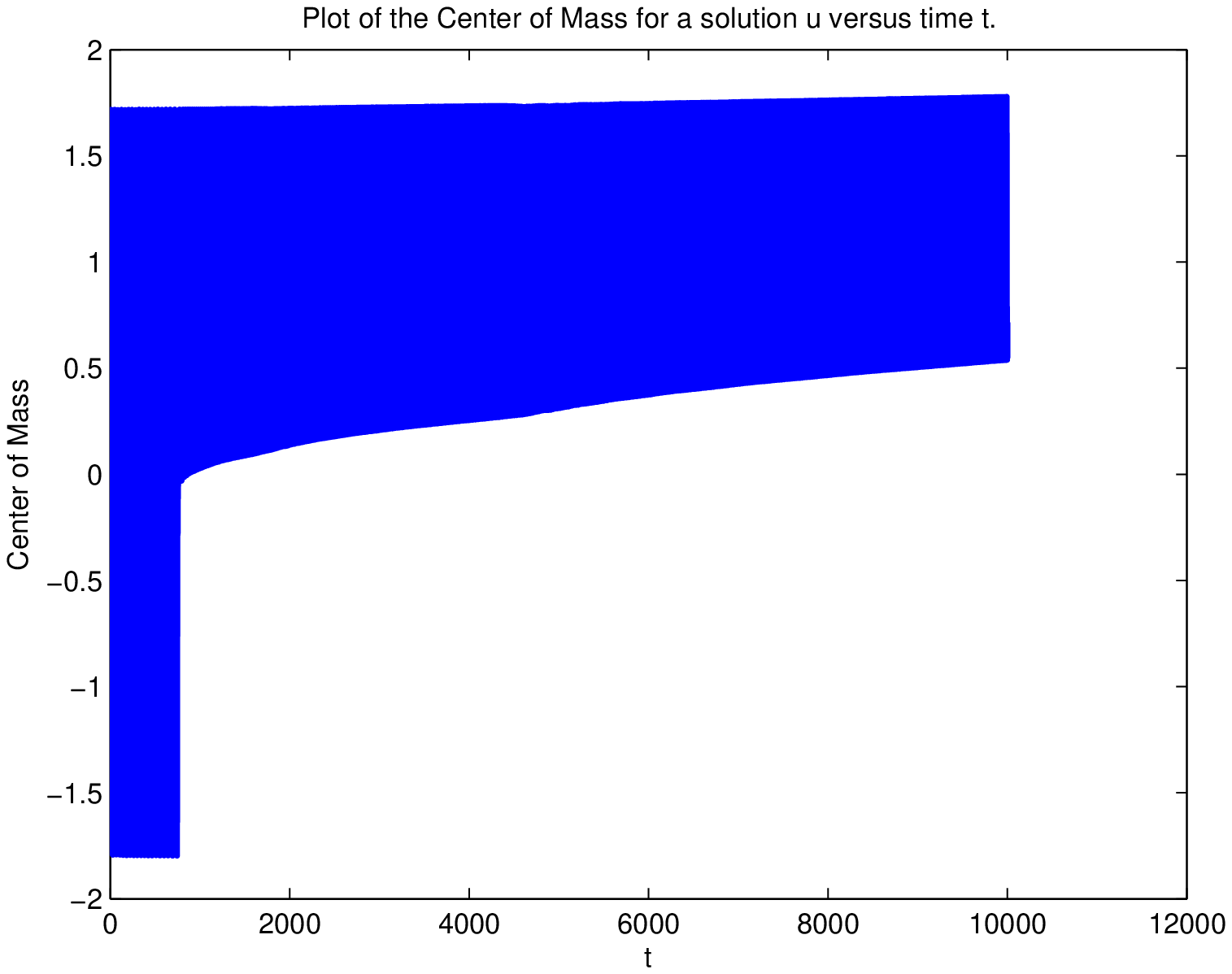} 
\includegraphics[scale=0.25]{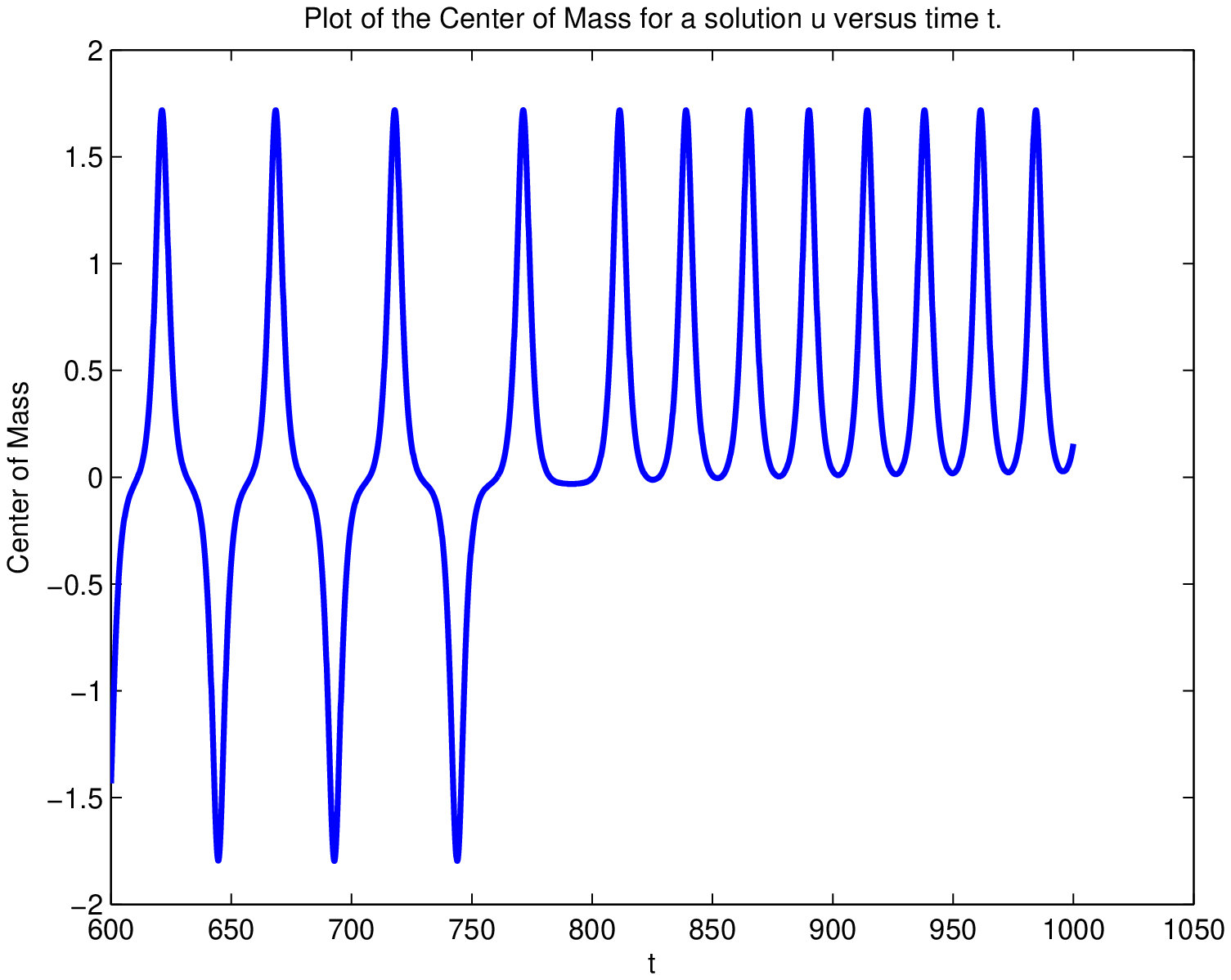} 
\caption{A numerical plot of the center of mass of an oscillatory solution decaying to a ground state and in particular the transition region blown up to capture the fast oscillations.}
\label{fig11}
\end{figure}

\begin{figure}
\includegraphics[scale=0.25]{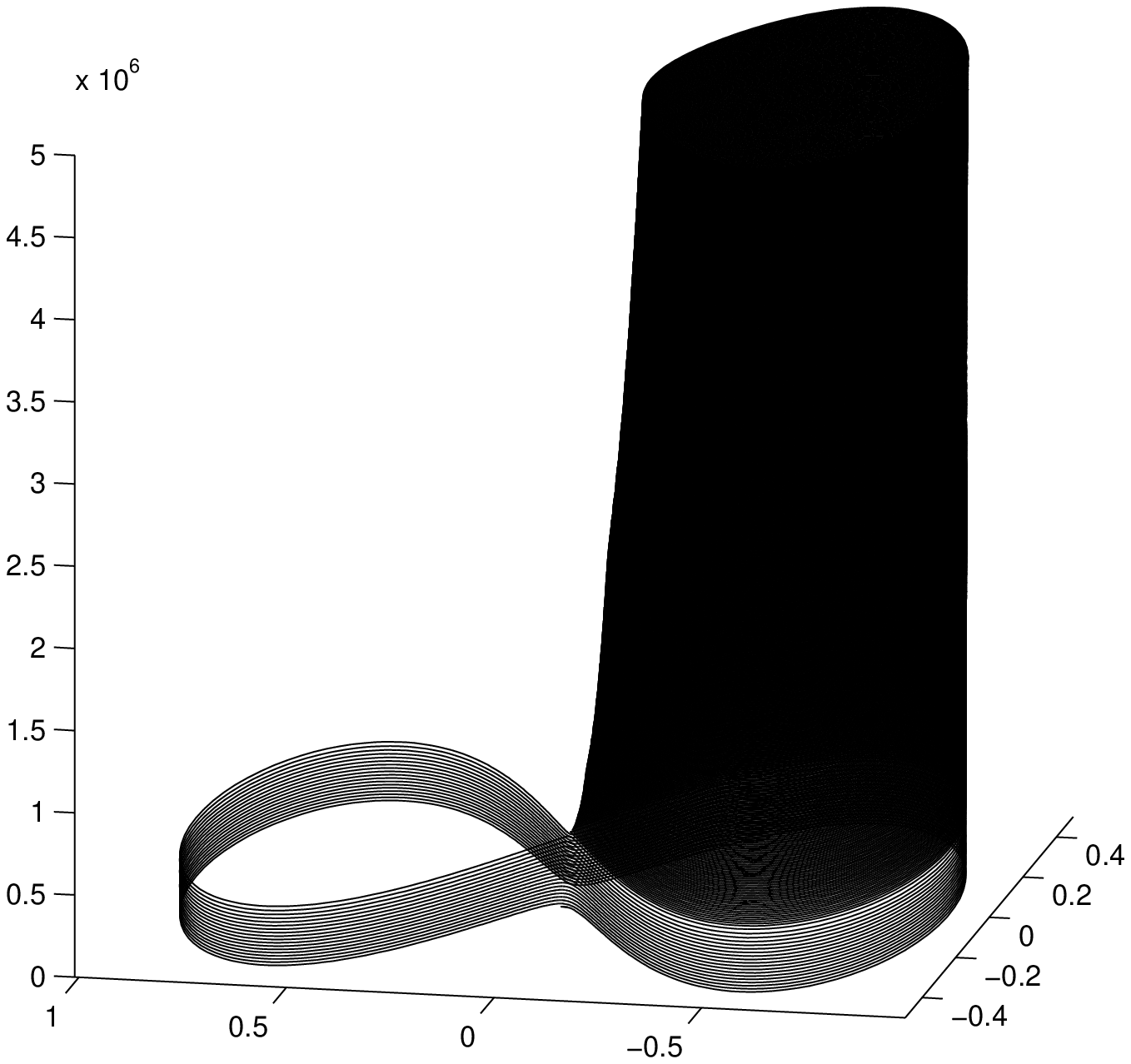} 
\includegraphics[scale=0.25]{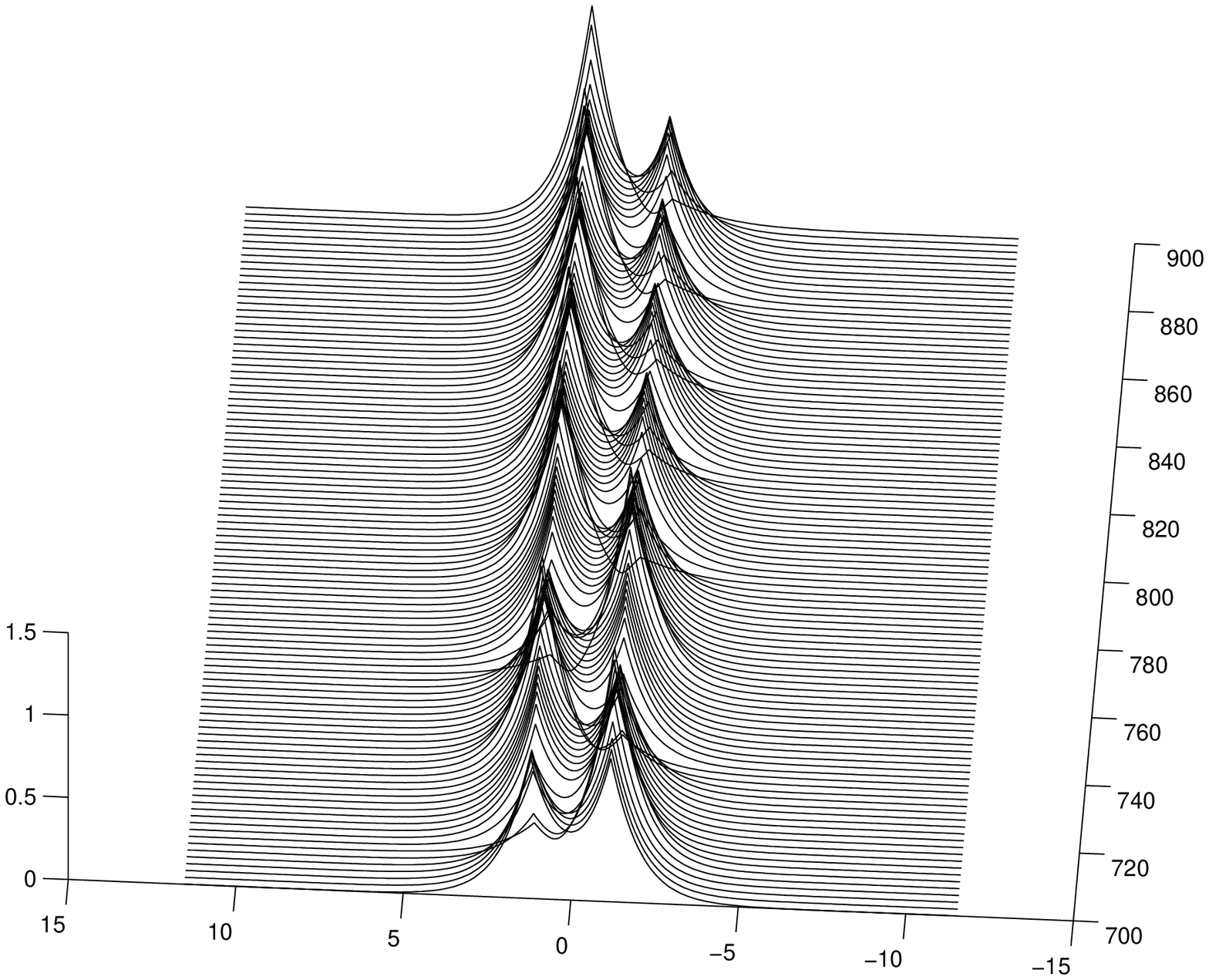}
\caption{A numerical plot of transition across the separatrix for the full infinite dimensional problem in both the projection onto the phase diagram and a plot of the amplitude with respect to time.}
\label{fig12}
\end{figure}

\section{Conclusion and discussion}
\label{sec:conclusion}

The fact that we may observe oscillations between potential wells in systems such as \eqref{eqn:nlsdwp} is not a new result, however we have been able to give a representation of the phenomenon in terms of a classical oscillatory system.  In future work on double well potentials, the authors hope to prove long time stability for oscillations far from the equilibrium point and optimize the time of existence proof by having better control of the damping caused by coupling to the continuous spectrum.  In addition, these pseudo-bound states represent possible solutions resulting from the problem of scattering of solitons across double well potential wells, which in the high velocity limit have been recently studied in \cite{ALS}.  

Finally, the authors would like to point out this question of
oscillation and the resulting dynamical systems becomes more
challenging and interesting as the number of wells is increased, see
\cite{KKC}.  In particular, as one increases the number of wells to
$\infty$, the phase shift required to see oscillation from one well to
the next might give insight into the celebrated Peireles-Nabbaro
barrier for discrete nonlinear Schr\"odinger systems, see
\cite{PeyKru}, \cite{KivCam}, \cite{Kev}.

\appendix

\section{Error Estimates for the Finite Dimensional Ansatz}
\label{sec:errorestfd}

We assume we are near the symmetry breaking equilibrium point, meaning we may take $\alpha(t), \beta(t) \ll A(t)$ and $A(t) > c > 0$.  
Then, we have

\begin{eqnarray*}
(i \dot{A}  - \dot{\theta} A - \Omega_0 A) e^{i \theta} \psi_0 + (i \dot{\alpha} - \dot{\beta} - \dot{\theta} (\alpha + i \beta) - (\alpha + i \beta) \Omega_1 )e^{i \theta} \psi_1 + i R_t - HR - \dot{\theta} R = \\
 - \left[ A^3 \psi_0^3 + (\alpha^2 + \beta^2) (\alpha + i \beta) \psi_1^3 + A (\alpha + i \beta)^2 \psi_1^2 \psi_0 + \right.\\
\left. 2 A (\alpha^2 + \beta^2) \psi_1^2 \psi_0 + A^2 (\alpha - i \beta) \psi_0^2 \psi^1 + 2 A^2 (\alpha + i \beta) \psi_0^2 \psi^1 \right] \\
- \left[ 2 A^2 \psi_0^2 + 4 A(\alpha) \psi_0 \psi_1 + 2 (\alpha^2 + \beta^2) \psi_1^2  \right] R + \left[ A^2 \psi_0^2 + (\alpha + i \beta)^2 \psi_1^2 + 2 A (\alpha + i \beta) \psi_0 \psi_1  \right] \bar{R}  \\
- \left[ A \psi_0 + (\alpha - i \beta) \psi_1 \right] R^2 - \left[ 2 A \psi_0 + 2 (\alpha + i \beta) \psi_1 \right] |R|^2 - |R|^2 R.
\end{eqnarray*}
Hence, 
\begin{eqnarray*}
( i \dot{A}  - \dot{\theta} A - \Omega_0 A + A^3 + A(\alpha + i \beta)^2 + 2 A (\alpha^2 + \beta^2)) = \\
- \left[ A \langle \psi_0^2, R^2 \rangle + (\alpha - i \beta) \langle \psi_0 \psi_1 , R^2 \rangle  \right] - \left[ 2 A \langle \psi_0^2, |R|^2 \rangle + 2 (\alpha + i \beta) \langle \psi_0 \psi_1, |R|^2 \rangle \right] \\
 - \left[ \langle \psi_0, |R|^2 R \rangle \right].
\end{eqnarray*}
As a result, we have
\begin{eqnarray*}
(i \dot{\alpha} - \dot{\beta} - \dot{\theta} (\alpha + i \beta) - \Omega_1 (\alpha + i \beta) + (\alpha^2 + \beta^2 )(\alpha + i \beta) + 2 A^2 (\alpha + i \beta) + A^2 (\alpha - i \beta)) = \\
- \left[ A \langle \psi_0 \psi_1, R^2 \rangle + (\alpha - i \beta) \langle \psi_1^2, R^2 \rangle \right] e^{i 2 \theta} - \left[ 2 A \langle \psi_0 \psi_1, |R|^2 \rangle + 2 (\alpha + i \beta) \langle \psi_1^2 , |R|^2 \rangle \right] \\
 - \left[ \langle \psi_0, |R|^2 R \rangle \right]
\end{eqnarray*}
and
\begin{eqnarray*}
i R_t - H R - \dot{\theta} R = - P_c \left[ A^3 \psi_0^3 + (\alpha^2 + \beta^2) (\alpha + i \beta) \psi_1^3 + A (\alpha + i \beta)^2 \psi_1^2 \psi_0 \right. \\
\left. + 2 A (\alpha^2 + \beta^2) \psi_1^2 \psi_0 + A^2 (\alpha - i \beta) \psi_0^2 \psi^1 + 2 A^2 (\alpha + i \beta) \psi_0^2 \psi^1 \right] \\
- P_c \left[ 2 A^2 \psi_0^2 + 4 A(\alpha) \psi_0 \psi_1 + 2 (\alpha^2 + \beta^2) \psi_1^2  \right] R - P_c \left[ A^2 \psi_0^2 + (\alpha + i \beta)^2 \psi_1^2 + 2 A (\alpha + i \beta) \psi_0 \psi_1  \right] \bar{R}  \\
- P_c \left[ A \psi_0 + (\alpha - i \beta) \psi_1 \right] R^2 - P_c \left[ 2 A \psi_0 + 2 (\alpha + i \beta) \psi_1 \right] |R|^2 - P_c |R|^2 R
\end{eqnarray*}
or
\begin{eqnarray*}
i R_t - H R - \dot{\theta} R = \left[ F_b (A, \alpha, \beta, \theta) + F_R (A, \alpha, \beta, \theta; R, \bar{R}) \right] ,
\end{eqnarray*}
where we have assumed
\begin{eqnarray*}
P_c F_b = F_b, \ P_c F_R = F_R.
\end{eqnarray*}

Let us take $A> 0$.  Then, we see
\begin{eqnarray*}
\dot{A} & = & - 2 \alpha \beta A + \text{Error}_A', \\
\dot{\alpha} & = & [\Omega_1 - (\alpha^2 + \beta^2) - A^2 + \dot{\theta}] \beta + \text{Error}_\alpha', \\
\dot{\beta} & = & - [ \Omega_1 - (\alpha^2 + \beta^2 + A^2) -2 A^2 + \dot{\theta}] \alpha + \text{Error}_\beta', \\
A \dot{\theta} & = & -\Omega_0 A+ A^3 + A (3 \alpha^2 + \beta^2) + \text{Error}_\theta' \\
i R_t - H R - \dot{\theta} R & = &  F_b (A, \alpha, \beta) + F_R (A, \alpha, \beta; R, \bar{R}).
\end{eqnarray*}

Specifically, we have \eqref{eqn:coupledfd1}-\eqref{eqn:coupledid} with
\begin{eqnarray*}
\text{Error}_A (R, \bar{R}, \vec{\alpha}) & = &  \Im  \left( - \langle \left[ 2 A^2 \psi_0^3 + 4 A(\alpha) \psi_0^2 \psi_1 + 2 (\alpha^2 + \beta^2) \psi_1^2 \psi_0   \right], R \rangle \right. \\
& - & \langle \left[ A^2 \psi_0^3 + (\alpha + i \beta)^2 \psi_1^2 \psi_0 + 2 A (\alpha + i \beta) \psi_0^2 \psi_1  \right],  \bar{R} \rangle \\
& - & \left[ A \langle \psi_0^2, R^2 \rangle + (\alpha - i \beta) \langle \psi_0 \psi_1 , R^2 \rangle  \right] \\
& - & \left. \left[ 2 A \langle \psi_0^2, |R|^2 \rangle + 2 (\alpha + i \beta) \langle \psi_0 \psi_1, |R|^2 \rangle \right] + \langle \psi_0 , |R|^2 R \rangle \right) , 
\end{eqnarray*}
\begin{eqnarray*}
\text{Error}_\alpha (R, \bar{R}, \vec{\alpha}) & = & \Im  \left( - \langle \left[ 2 A^2 \psi_0^2 \psi_1 + 4 A(\alpha) \psi_0 \psi_1^2 + 2 (\alpha^2 + \beta^2) \psi_1^3   \right], R \rangle \right. \\
& - & \langle \left[ A^2 \psi_0^2 \psi_1 + (\alpha + i \beta)^2 \psi_1^3 + 2 A (\alpha + i \beta) \psi_0 \psi_1^2  \right],  \bar{R} \rangle \\ 
&-& \left[ A \langle \psi_0 \psi_1, R^2 \rangle + (\alpha - i \beta) \langle \psi_1^2, R^2 \rangle \right]  \\
& - & \left. \left[ 2 A \langle \psi_0 \psi_1, |R|^2 \rangle + 2 (\alpha + i \beta) \langle \psi_1^2 , |R|^2 \rangle \right]  + \langle \psi_1 , |R|^2 R \rangle \right) \\
&-& \beta A^{-1} \Re \left( - \langle \left[ 2 A^2 \psi_0^3 + 4 A(\alpha) \psi_0^2 \psi_1 + 2 (\alpha^2 + \beta^2) \psi_1^2 \psi_0   \right], R \rangle \right. \\
& - & \langle \left[ A^2 \psi_0^3 + (\alpha + i \beta)^2 \psi_1^2 \psi_0 + 2 A (\alpha + i \beta) \psi_0^2 \psi_1  \right],  \bar{R} \rangle \\ 
&-& \left[ A \langle \psi_0^2, R^2 \rangle + (\alpha - i \beta) \langle \psi_0 \psi_1 , R^2 \rangle  \right]  \\
&-& \left. \left[ 2 A \langle \psi_0^2, |R|^2 \rangle + 2 (\alpha + i \beta) \langle \psi_0 \psi_1, |R|^2 \rangle \right] + \langle \psi_0 , |R|^2 R \rangle \right), 
\end{eqnarray*}
\begin{eqnarray*}
\text{Error}_\beta (R, \bar{R}, \vec{\alpha}) & = &  - \Re \left( - \langle \left[ 2 A^2 \psi_0^2 \psi_1 + 4 A(\alpha) \psi_0 \psi_1^2 + 2 (\alpha^2 + \beta^2) \psi_1^3   \right], R \rangle \right. \\
& - & \langle \left[ A^2 \psi_0^2 \psi_1 + (\alpha + i \beta)^2 \psi_1^3 + 2 A (\alpha + i \beta) \psi_0 \psi_1^2  \right],  \bar{R} \rangle \\ 
&-& \left[ A \langle \psi_0 \psi_1, R^2 \rangle + (\alpha - i \beta) \langle \psi_1^2, R^2 \rangle \right]  \\
& - & \left. \left[ 2 A \langle \psi_0 \psi_1, |R|^2 \rangle + 2 (\alpha + i \beta) \langle \psi_1^2 , |R|^2 \rangle \right]  + \langle \psi_1 , |R|^2 R \rangle \right) \\
& - & \alpha A^{-1} \Re \left( - \langle \left[ 2 A^2 \psi_0^3 + 4 A(\alpha) \psi_0^2 \psi_1 + 2 (\alpha^2 + \beta^2) \psi_1^2 \psi_0   \right], R \rangle \right. \\
& - & \langle \left[ A^2 \psi_0^3 + (\alpha + i \beta)^2 \psi_1^2 \psi_0 + 2 A (\alpha + i \beta) \psi_0^2 \psi_1  \right],  \bar{R} \rangle \\ 
&-& \left[ A \langle \psi_0^2, R^2 \rangle + (\alpha - i \beta) \langle \psi_0 \psi_1 , R^2 \rangle  \right]  \\
&-& \left. \left[ 2 A \langle \psi_0^2, |R|^2 \rangle + 2 (\alpha + i \beta) \langle \psi_0 \psi_1, |R|^2 \rangle \right] + \langle \psi_0 , |R|^2 R \rangle \right),
\end{eqnarray*}
and
\begin{eqnarray*}
\text{Error}_\theta (R, \bar{R}, \vec{\alpha}) & = & - A^{-1} \Re \left( - \langle \left[ 2 A^2 \psi_0^3 + 4 A(\alpha) \psi_0^2 \psi_1 + 2 (\alpha^2 + \beta^2) \psi_1^2 \psi_0   \right], R \rangle \right. \\
& - & \langle \left[ A^2 \psi_0^3 + (\alpha + i \beta)^2 \psi_1^2 \psi_0 + 2 A (\alpha + i \beta) \psi_0^2 \psi_1  \right],  \bar{R} \rangle \\
& - & \left[ A \langle \psi_0^2, R^2 \rangle + (\alpha - i \beta) \langle \psi_0 \psi_1 , R^2 \rangle  \right] \\
& - & \left. \left[ 2 A \langle \psi_0^2, |R|^2 \rangle + 2 (\alpha + i \beta) \langle \psi_0 \psi_1, |R|^2 \rangle \right] + \langle \psi_0 , |R|^2 R \rangle \right)  .
\end{eqnarray*}

\section{Dispersive Estimates}
\label{sec:est}

In this section, we follow closely the work \cite{W} on wave operators for Schr\"odigner operators defined on $\RR$.  For proofs and further exposition see \cite{W} and the references contained within.  First of all, let us define $H_0 = -\Delta$ and $H = -\Delta + V$ with the constraints on $V$ to be discussed in the sequel.  

The wave operators, $W_\pm$ are defined by
\begin{eqnarray}
W_\pm = \lim_{t \to \infty} e^{it H} e^{-it H_0}.
\end{eqnarray}
Similarly, their adjoints are defined by
\begin{eqnarray}
W_\pm^* = \lim_{t \to \infty} e^{i t H_0} e^{-it H} P_c,
\end{eqnarray}
where $P_c$ is the projection onto the continuous spectrum of $H$.  The notion of wave operators is intimately related to the idea of distorted Fourier bases, which are discussed in detail in \cite{Ag}, \cite{Ho2}, \cite{RSv4}.  In one dimension, this is directly related to the Jost solutions.  These objects are studied in general in \cite{RSv4} and generalized to even a certain class of non-self-adjoint operators in \cite{KS}.

We define a space $L^1_\gamma$ the space of all complex-valued measurable functions $\phi$ defined on $\RR$ such that
\begin{eqnarray}
\| \phi \|_{L^1_\gamma} = \int | \phi (x) | (1 + |x|)^\gamma dx < \infty .
\end{eqnarray}
Also, take the space $W^{k,p}$ to be the standard Sobolev space defined by having $k$ derivatives bounded in the $L^p$ norm.
Then, we have the following 
\begin{thm}[Weder]
\label{thm:w}
Suppose that $V \in L^1_\gamma$ for $\gamma > \frac{5}{2}$ and that for some $k = 1,2,\dots$, $V^{(l)} \in L^1$ for $l = 0,1,2,\dots,k-1$.  Then $W_\pm$ and $W^*_\pm$ originally defined on $W_{k,p} \cap L^2$, $1 \leq p \leq \infty$, have extensions to bounded operators on $W_{k,p}$, $1 < p < \infty$.  Moreover, there are constants $C_p$ such that:
\begin{eqnarray}
\| W_\pm f \|_{W^{k,p}} \leq C_p \| f \|_{W^{k,p}}, \ \| W^*_\pm f \|_{W^{k,p}} \leq C_p \| f \|_{W^{k,p}}, \ f  \in W^{k,p} \cap L^2, \ 1 < p < \infty.
\end{eqnarray}
\end{thm}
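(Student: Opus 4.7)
The approach I would take is Weder's: represent $W_{\pm}$ explicitly through the distorted Fourier transform built from the Jost solutions of $H = -\partial_x^2 + V$, reduce their continuity on $L^p$ to a Calder\'on--Zygmund estimate, and then upgrade to $W^{k,p}$ using the intertwining identity $H W_{\pm} = W_{\pm} H_0$ combined with the regularity hypotheses on $V$.

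Under the decay hypothesis $V \in L^1_\gamma$ with $\gamma > 5/2$, the Jost solutions $f_\pm(x,k)$ of $-\psi'' + V\psi = k^2\psi$ are well-defined and admit the standard asymptotics $f_+(x,k) \sim e^{ikx}$ as $x \to +\infty$ and $f_-(x,k) \sim e^{-ikx}$ as $x \to -\infty$. From the scattering relations one then derives an explicit representation of the form
\[ (W_+ f)(x) = \frac{1}{2\pi} \int_{\RR} \Psi_+(x,k)\, \hat{f}(k)\, dk, \]
where $\Psi_+(x,k)$ is assembled from $f_\pm(x,k)$ together with the transmission coefficient $T(k)$ and reflection coefficients $R_\pm(k)$. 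The decay hypothesis $\gamma > 5/2$ yields sufficient smoothness of the scattering data and the crucial asymptotic $T(k) = 1 + O(|k|^{-1})$ as $|k| \to \infty$, which lets us treat $W_+$ as a perturbation of the identity.

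For the base case $k = 0$, I would write $W_+ = I + K_+$ and split the kernel of $K_+$ into a high-energy and a low-energy piece. The high-energy piece is of oscillatory-integral / modulated-Hilbert-transform type; combining cancellation in the $k$ integral with symbol estimates on $\Psi_+(x,k) - e^{ikx}$, one identifies it as a Calder\'on--Zygmund operator and so obtains boundedness on $L^p$ for every $1 < p < \infty$. The low-energy piece has a kernel that, under $\gamma > 5/2$, is uniformly integrable in one variable and is handled by direct Schur-type estimates. The $W^{k,p}$ upgrade is then via the intertwining identity, which gives $-\partial_x^2 W_+ f = W_+(-\partial_x^2 f) - V\, W_+ f$: iterating and applying the Leibniz rule, $\partial_x^k(W_+ f)$ decomposes into $W_+\partial_x^k f$ plus lower-order terms involving $V^{(l)}$ with $l \le k-1$. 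The hypothesis $V^{(l)} \in L^1$, $l = 0,\ldots,k-1$, together with the decay of $V$ (which in one dimension upgrades each $V^{(l)}$ to $L^\infty$ by the fundamental theorem of calculus) makes multiplication by $V^{(l)}$ a bounded operator on $L^p$, so the induction closes on the base $L^p$ bound. The analogous argument handles $W_-$ and the adjoints $W_\pm^*$.

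The main obstacle is the low-energy analysis of the kernel of $K_+$: the $k \to 0$ expansion of $T(k)$ and $R_\pm(k)$ depends sensitively on whether $0$ is a regular or an exceptional (``half-bound state'') point of $H$, and each case must be treated on its own. The decay hypothesis $\gamma > 5/2$ is essentially tight precisely because it is what is needed to control the threshold behavior of the scattering data in both cases and keep the low-energy contribution to $K_+$ $L^p$-bounded uniformly in $x$; any weaker decay on $V$ would cause this step to fail.
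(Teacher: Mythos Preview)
The paper does not give its own proof of this theorem: it is stated as a result of Weder, and the surrounding text explicitly refers the reader to \cite{W} for the proof (``For proofs and further exposition see \cite{W} and the references contained within''). So there is nothing in the paper to compare your proposal against.

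That said, your sketch is a fair outline of Weder's actual argument in \cite{W}: the representation of $W_\pm$ via the distorted Fourier transform built from Jost solutions, the decomposition $W_+ = I + K_+$ with the high-energy part handled by Calder\'on--Zygmund theory and the low-energy part by direct kernel estimates requiring the threshold analysis (regular vs.\ exceptional point at $k=0$), and the upgrade to $W^{k,p}$ via the intertwining relation. One small point: your claim that the fundamental theorem of calculus upgrades each $V^{(l)}$ from $L^1$ to $L^\infty$ only works for $l \le k-2$ (you need $V^{(l+1)} \in L^1$ to conclude $V^{(l)}$ has bounded variation and hence is bounded), so the top-order derivative $V^{(k-1)}$ is only known to lie in $L^1$, not $L^\infty$; you would need to organize the induction so that multiplication by $V^{(k-1)}$ only hits functions already controlled in an appropriate norm. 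This is a bookkeeping issue rather than a genuine obstruction.
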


Note, there are specific requirements on the potential $V$ which allow Theorem \ref{thm:w} to be extended to the cases $p = 1$ and $p = \infty$, however we will not discuss them here.  

An important property of wave operators is that for any Borel function $f$, we have
\begin{eqnarray}
f(H) P_c = W_\pm f(H_0) W^*_\pm, \ f(H_0) = W^*_\pm f(H) P_c W_\pm.
\end{eqnarray}
Hence, we have
\begin{eqnarray}
\| e^{i H t} P_c f \|_{L^p} = \| W_\pm e^{i tH_0} W^*_\pm f \|_{L^p}
\end{eqnarray}
and using standard dispersive estimates for the linear Schr\"odinger operator (see for instance \cite{SS} for a concise overview) arrive at
\begin{eqnarray}
\| e^{i H t} P_c f \|_{L^p} \leq C_p t^{-(\frac{1}{2} - \frac{1}{p})} \| f \|_{W^{k,p}}.
\end{eqnarray}
Define a Strichartz pair $(q,r)$ to be admissible if
\begin{eqnarray}
\label{strnum}
\frac{2}{q} = \frac{1}{2} - \frac{1}{r}
\end{eqnarray}
with $2 \leq r < \infty$.  Then, we arrive at the celebrated Strichartz estimates
\begin{eqnarray}
\label{eqn:strich1}
\| e^{iHt} P_c u_0 \|_{L^q W^{k,r}} \lesssim \| u_0 \|_{ W^{k,2}}
\end{eqnarray}
and, using duality techniques and once again the boundedness of the
wave operators, we have
\begin{eqnarray}
\label{eqn:strich2}
\| \int_0^t e^{iH(t-s)} P_c f ds \|_{L^q W^{k,r}} \lesssim \| f(x,t) \|_{L^{\tilde{q}'}_t W^{k,\tilde{r}'}_x},
\end{eqnarray}
where $(q,r)$ and $(\tilde q, \tilde r)$ satisfy \eqref{strnum}.
.

As a side note, using positive commutators and well crafted local smoothing spaces, from \cite{MMT}, we have the full Strichartz estimate
\begin{eqnarray}
\label{eqn:str1}
\| \int_0^t e^{iH(t-s)} P_c f ds \|_{L^\infty L^2} \lesssim \| f(x,t) \|_{L^{\tilde{p}'}_t L^{\tilde{q}'}_x},
\end{eqnarray}
where $(\tilde{p},\tilde{q})$ is any allowable pair as in \eqref{strnum}.
Now, implementing the boundedness of wave operators on $W^{k,p}$ spaces from \cite{W}, we have the following useful relation
\begin{eqnarray}
\| \int_0^t e^{iH(t-s)} P_c f ds \|_{L^\infty H^1} \lesssim \| f(x,t) \|_{L^{\tilde{p}'}_t W^{1,\tilde{q}'}_x},
\end{eqnarray}
where again $(\tilde{p},\tilde{q})$ is a Strichartz pair as in
\eqref{strnum} without first going through the dispersive estimates. 

Note, as mentioned in the introduction the discussion above may be
extended to the case of $V$ having delta function type singularities using formalism discussed in \cite{DMW}.

\end{document}